\numberwithin{equation}{section}
\theoremstyle{plain}
\newtheorem{theorem}{Theorem}[section]
\newtheorem{corollary}[theorem]{Corollary}
\newtheorem{lemma}[theorem]{Lemma}
\newtheorem{proposition}[theorem]{Proposition}
\newtheorem{conjecture}[theorem]{Conjecture}
\theoremstyle{definition}
\newtheorem{definition}[theorem]{Definition}
\newtheorem{remark}[theorem]{Remark}
\newtheorem{example}[theorem]{Example}
\theoremstyle{remark}
\newcommand{\OO}{\mathcal O}
\newcommand{\A}{\mathbb{A}}
\newcommand{\R}{\mathbb{R}}
\newcommand{\Q}{\mathbb{Q}}
\newcommand{\Z}{\mathbb{Z}}
\newcommand{\C}{\mathbb{C}}
\renewcommand{\H}{\mathbb{H}}
\newcommand{\D}{\mathbb{D}}
\newcommand{\X}{\mathbb{X}}
\newcommand{\zxz}[4]{\begin{pmatrix} #1 & #2 \\ #3 & #4 \end{pmatrix}}
\newcommand{\abcd}{\zxz{a}{b}{c}{d}}
\newcommand{\kzxz}[4]{\left(\begin{smallmatrix} #1 & #2 \\ #3 & #4\end{smallmatrix}\right) }
\newcommand{\vol}{\operatorname{vol}}
\newcommand{\tr}{\operatorname{tr}}
\newcommand{\Spin}{\operatorname{Spin}}
\newcommand{\Mp}{\operatorname{Mp}}
\newcommand{\Aut}{\operatorname{Aut}}
\newcommand{\End}{\operatorname{End}}
\newcommand{\Iso}{\operatorname{Iso}}
\newcommand{\Pet}{\text{\rm Pet}}
\newcommand{\GL}{\operatorname{GL}}
\newcommand{\Pic}{\operatorname{Pic}}
\newcommand{\Div}{\operatorname{Div}}
\newcommand{\z}{\operatorname{Z}}
\newcommand{\diag}{\operatorname{diag}}
\newcommand{\ord}{\operatorname{ord}}
\newcommand{\kay}{k}
\newcommand{\Gspin}{\operatorname{GSpin}}
\newcommand{\ff}{\hbox{if }}
\newcommand{\SL}{\operatorname{SL}}
\newcommand{\B}{\mathbb B}
\newcommand{\CH}{\operatorname{CH}}
\newcommand{\GS}{\operatorname{GS}}
\newcommand{\Naive}{\operatorname{Naive}}
\newcommand{\Cusp}{\operatorname{Cusp}}
\newcommand{\MW}{\operatorname{MW}}
\begin{document}

\title{Arithmetic Siegel Weil formula on $X_{0}(N)$}

\date{\today}
\author[Tuoping Du]{Tuoping Du}
\address{Department of Mathematics, Northwest University, Xi'an, 710127 ,  P.R. China}
\email{dtp1982@163.com}
\author[Tonghai Yang]{Tonghai Yang}
\address{Department of mathematics, Wisconsin University}
\email{thyang@math.wisc.edu}

\begin{abstract} In this paper, we proved an arithmetic Siegel-Weil formula and the modularity of some arithmetic theta function on the modular curve $X_0(N)$ when $N$ is square free.
In the process, we also constructed some generalized Delta function for $\Gamma_0(N)$ and proved some explicit Kronecker limit formula for Eisenstein series on $X_0(N)$.

\end{abstract}

\dedicatory{}

\subjclass[2000]{11G15, 11F41, 14K22}

\thanks{The first author is  partially supported by NSFC(No.11401470), and the second author is partially supported a NSF grant DMS-1500743.}


\maketitle

\section{introduction}\label{introduction}\pagenumbering{arabic}\setcounter{page}{1}

It is well-known  that there is a deep connection between the leading term of some analytic  functions and the arithmetics, such
as the class number formula, Birch and Swinnerton-Dyer conjecture, Block-Kato conjecture and the Siegel-Weil formula.  Little  is known or understood
about the possible connection between the second term of these functions and arithmetic although it started to change in this century.  The most famous one
is the Kronecker  limit formula:
$$
E(\tau, s) =\sum_{\gamma \in \Gamma_\infty \backslash \SL_2(\Z)} \Im(\gamma z)^s
          = 1 + \frac{1}6 (\log |\Delta(\tau) \Im(z)^6|) s + O(s^2).
$$
We refer to \cite{Siegel} for its proof and its  beautiful application to class numbers.   In 2004, Kulda, Rapoport and Yang
(\cite{KRYComp}) discovered another second term identity of some Eisenstein series of weight $3/2$---the so-called arithmetic Siegel-Weil
formula. Roughly speaking, they defined an arithmetic function---a generating function $\widehat{\phi}_{KRY}(\tau)$ of  a family of arithmetic divisors in a Shimura curve.  They proved that  its degree is the special value of some Eisenstein series $\mathcal E(\tau, s)$ (weight $3/2$) at $s=1$ and that arithmetic intersection  with the (normalized) metrized Hodge bundle on the Shimura curve is the derivative of the same Eisenstein series $\mathcal E(\tau, s)$ at $s=1$ (second term).
 This case is different from the Kronecker formula in two ways. Firstly, the leading term is already connected with arithmetics by the
Siegel-Weil formula and is non-trivial. Secondly, the second term (derivative) is found to be deeply related with the Gillet-Soul\'e  height paring on a Shimura curve. Its
analogue in $X_0(1)$ was worked out later by Kudla and Yang, and was reported in \cite{Yazagier}. In this case, the Eisenstein series is Zagier's famous Eisenstein series \cite{HZ} of weight $3/2$. In \cite{BF1}, Bruinier and Funke gave a different
proof of the main result of  \cite{Yazagier} using theta lifting. Colmez conjecture \cite{Col} can also be viewed as an second term of `CM' Hecke $L$-functions $L'(0, \chi)$ in terms of Faltings' height.  We should mention the  breakthrough formula of Zhiwei Yun and
Wei Zhang which  relates the $n$-th  central derivative of the $L$-function of an automorphic representation on $\GL_2$  over a function field and
height pairing of some cycles in middle dimension  on some Drinfeld space \cite{YunZhang}. We also mention the beautiful second term identity in
the Siegel-Weil formula (see for example \cite{GQT} and references there), although it has different flavor.

Later in the book \cite[Chapter 4]{KRYBook}, Kudla proved that the arithmetic theta function $\widehat{\phi}_{KRY}$ is modular. In this paper, we will prove  both the arithmetic Siegel-Weil formula and the modularity of  a similar arithmetic theta function in the case of modular curve $X_0(N)$ when  $N$ is square free. The complication comes mainly from the cusps, and we need to understand the behavior of Kudla's Green functions at cusps carefully. We give a complete description  of its behavior at cusps--which is totally new. It is an interesting and likely very challenging question to extend the analysis to high dimensional Shimura varieties of orthogonal type $(n, 2)$.  The metrized Hodge bundle has log singularity at cusps presents another complication.  The method in \cite{KRYComp} in computing the arithmetic intersection   does not seem to extend to this case easily. Instead, we will use theta lifting method following  \cite{BF1}.   In the process, we also obtain some explicit Kronecker limit formula for Eisenstein series  of weigh $0$ for $\Gamma_0(N)$, which should be of independent interest. In particular, we construct an  explicit modular form (denoted by $\Delta_N$), which  gives  a  rational section of Hodge bundle and  plays an essential role in  proving the arithmetic Siegel-Weil formula. After the arithmetic Siegel-Weil formula is proved, the modularity theorem  follows the same method of \cite[Chapter 4]{KRYBook} with a little modification.

Now we set up notations and describe the main results in a little more detail.

Let  \begin{equation}
V=\bigg\{w=\left(
  \begin{array}{cc}
    w_1 &  w_2  \\
      w_3 &  -w_1 \\
  \end{array}
\right)  \in M_{2}(\Q) : \tr(w) =0\bigg\},
\end{equation}
with quadratic form $Q(w) =N \det w =-N (w_1^2 + w_2 w_3)$, and let
   \begin{equation}
L=\bigg\{w =\left(
  \begin{array}{cc}
    b &  \frac{-a}{N}  \\
      c &  -b \\
  \end{array}
\right) \in M_{2}(\Z) \mid   a, b, c \in \Z \bigg\}
\end{equation}
 be an even integral lattice with the dual lattice $L^{\sharp}$. Then $\Spin(V)\cong \SL_2$ acts on $V$ by conjugation, and the associated Hermitian symmetric domain $\mathbb D$ is isomorphic to the upper half plane $\H$.  Since $\Gamma_0(N)$ preserves $L$ and acts on $L^\sharp/L$ trivially, we can and will identify $X_0(N)$ with the compactification of the  open orthogonal Shimura curve $\Gamma_0(N) \backslash \mathbb D$ (see Section \ref{preliminaries} for detail).

 For each $\mu \in  L^\sharp/L$, denote  $L_{\mu}=\mu+L$, and
$$
L_\mu[n] =\{ w \in  L_\mu:\,  Q(w) = n\}.
$$
For $\mu \in L^\sharp/L$ and a positive rational number $n \in Q(\mu) +\Z$, let $Z(w) =\R w \in \mathbb D$ and define the devisor
\begin{eqnarray}
Z(n, \mu)&:=& \sum_{w\in  \Gamma_0(N) \backslash L_\mu[n] }Z(w) \in \CH^1(X_0(N))
\end{eqnarray}
When $\mu =\mu_r=\diag(\frac{r}{2N}, -\frac{r}{2N})$, this divisor is
the same as the Heegner divisors $P_{D,r}+ P_{D, -r} \in \hbox{CH}^1(X_0(N))$ in  \cite{GKZ}, where $D=-4Nn$ is a discriminant. For a positive real number $v>0$, let $\Xi(n, \mu, v)$ be the  Kudla Green function for $Z(n, \mu)$ in the open modular curve $Y_0(N)$ as defined in \cite{Kucentral} (see (\ref{eq:KudlaGreenFunctionDefinition}) for precise definition). The behavior of $\Xi(n, \mu, v)$ at cusps is complicated and has  not been studied before. In Sections \ref{sect:KudlaGreenFunction} and  \ref{sect:ModularCurve}, we will prove that it is smooth and of exponential decay when $D=-4Nn$ is not a square,  and has singularity along the the cusps  (Section \ref{sect:ModularCurve}) when $D\ne 0$ is a square. Even worse, when $D=0$ (which forces $\mu=0$), $\Xi(0, 0, v)$ has log-log singularity in the sense of \cite{BKK} (see Section \ref{sect:ArithIntersectionReview}). This is the most technical part of this paper.

 Let $\mathcal X_0(N)$ be the  canonical integral model over $\Z$ of $X_0(N)$ as defined in \cite{KM} (see Section  \ref{sect:ModularCurve}). In the arithmetic part of this paper, we assume $N$ is square free. For a point $x \in \mathcal X_0(N)$ over a field, since $\{ \pm 1\} \subseteq \Aut(x)$ always, we count $x$ with multiplicity $\frac{2}{|\Aut(x)|}$ for convenience.  Let $\mathcal Z(n, \mu)$ be the Zariski closure of $Z(n, \mu)$ in $\mathcal X_0(N)$, and we  obtain a family of arithmetic divisors $\widehat{\mathcal Z}(n, \mu, v)$ in $\widehat{\CH}^1_\R(\mathcal X_0(N))$---arithmetic Chow group with real coefficients in the sense of Gillet-Soul\'e as follows for $n\ne 0$:
   $$
\widehat{\mathcal Z}(n, \mu, v)
 =\begin{cases}
  (\mathcal Z(n, \mu), \Xi(n, \mu, v)) &\ff n >0,
  \\
  (0,  \Xi(n, \mu, v))  &\ff n < 0, D \ne \square,
  \\
  (g(n, \mu, v) \sum_{P \hbox{ cusps}} \mathcal P, \Xi(n, \mu, v)) &\ff  n < 0, D =\square.
 \end{cases}
$$
  Here $g(n, \mu, v)$ is some real number defined in  Theorem \ref{theo:SpecialGreen}, and $\mathcal P$ is the Zariski closure of the cusp $P$ in $\mathcal X_0(N)$. When $n=0$, the same formula (as  $D$ being a square) gives a `naive' arithmetic  Chow cycle $\widehat{\mathcal Z}(0, 0, v)^{\Naive}$, which has log-log singularity at the cusps and needs to be modified to make the `generating series' (to be defined below) modular.  Let $\widehat{\omega}_N$ be the  metrized Hodge bundle on $\mathcal X_0(N)$ with the  normalized Pettersson metric.  It has log singularity  at cusps in the sense of K\"uhn  (see Section \ref{sect:ArithIntersectionReview}). Its associated arithmetic divisor has log-log singularity at the cusps.
  It turns out  magically  that the modified arithmetic divisor
  \begin{equation}
  \widehat{\mathcal Z}(0, 0, v)=\widehat{\mathcal Z}(0, 0, v)^{\Naive} -2\widehat{\omega}_N- \sum_{p|N} \mathcal X_p^0 -(0, \log(\frac{v}{N}))
  \end{equation}
  belongs to $\widehat{\CH}^1_\R(\mathcal X_0(N))$ (Proposition \ref{prop:Chow}).  Here $\mathcal X_p^0$ (resp. $\mathcal X_p^\infty)$) is the irreducible component of $\mathcal X_0(N) \pmod p$ containing the reduction of the cusp $P_0$ (resp. $P_\infty$).  One of the main results  of this paper is the following analogue of  the  modularity theorem in \cite[Chapter 4]{KRYBook}.

  \begin{theorem} \label{theo:modularity}
 The arithmetic theta  function (for $\tau = u + iv$, and $q_\tau=e(\tau) =e^{2 \pi i \tau}$)
\begin{equation} \label{eq:GeneratingFunction1}
\widehat{\phi}(\tau) = \sum_{\mu \in L^\sharp/L} \sum_{n \in Q(\mu)+\Z} \widehat{\mathcal Z}(n, \mu, v) q_\tau^n e_\mu,
\end{equation}
is a vector valued modular form for $\Gamma'$ of weight $\frac{3}2$, valued in $\C[L^\sharp/L]\otimes \widehat{\CH}^1_\R(\mathcal X_0(N))$. Here $\Gamma'$ is the metaplectic cover of $\SL_2(\Z)$ which  acts on $\C[L^\sharp/L]$ via the Weil representation $\rho_L$ (see (\ref{eq:WeilRepresentation})) and acts on the arithmetic Chow group trivially.  Finally $\{ e_\mu:\, \mu \in  L^\sharp/L\}$ is the standard basic of $\C[L^\sharp/L]$.
  \end{theorem}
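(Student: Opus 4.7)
The plan is to follow the strategy of Kudla in \cite[Chapter 4]{KRYBook}, adapted to accommodate the multiple cusps of $X_0(N)$ and the reducible special fibers of $\mathcal X_0(N)$ at primes $p \mid N$. I fix a convenient spanning system for $\widehat{\CH}^1_\R(\mathcal X_0(N))$ (in the log-log framework of \cite{BKK}), consisting of the normalized metrized Hodge bundle $\widehat{\omega}_N$, the Zariski closures $\mathcal P_0, \mathcal P_\infty$ of the two cusps, the vertical components $\mathcal X_p^0, \mathcal X_p^\infty$ for $p \mid N$, and a family of horizontal Heegner classes pulled back from $X_0(N)$. Modularity of $\widehat{\phi}(\tau)$ is then reduced to modularity of its Gillet--Soul\'e pairing with each generator; once every scalar coordinate is a $\C[L^\sharp/L]$-valued modular form of weight $\tfrac{3}{2}$ under $\rho_L$, so is $\widehat{\phi}$ itself.

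The pairing $\langle \widehat{\phi}(\tau), \widehat{\omega}_N \rangle_{\GS}$ is exactly the content of the arithmetic Siegel--Weil formula proved earlier in the paper; it equals the central derivative of the vector-valued Eisenstein series $\mathcal E(\tau, s)$ of weight $\tfrac{3}{2}$ attached to $\rho_L$, and is therefore modular. The pairings against the vertical components $\mathcal X_p^0, \mathcal X_p^\infty$ and the cusp classes $\mathcal P_0, \mathcal P_\infty$ reduce by arithmetic intersection theory to explicit combinations of local intersection multiplicities at $p \mid N$ and of boundary values of the Kudla Green function $\Xi(n, \mu, v)$ at the cusps, both of which were computed in Sections \ref{sect:KudlaGreenFunction} and \ref{sect:ModularCurve}. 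In each case the resulting $q$-series is identified with either a Bruinier--Funke theta integral of an explicit Schwartz form (cf.\ \cite{BF1}) or a unary theta series, yielding modularity. Finally, the projection of $\widehat{\phi}(\tau)$ to the horizontal part of $\CH^1(X_0(N))_\R$ is the classical generating series of Heegner divisors, whose modularity is Borcherds' theorem and is again recovered through theta lifting.

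The main obstacle, as in \cite{KRYBook}, is the handling of the constant term $n=0$ in the presence of the log-log singularities of both $\Xi(0, 0, v)$ and the Pettersson-metrized $\widehat{\omega}_N$ at the cusps. One must verify that the normalization
$$
\widehat{\mathcal Z}(0, 0, v) = \widehat{\mathcal Z}(0, 0, v)^{\Naive} - 2 \widehat{\omega}_N - \sum_{p \mid N} \mathcal X_p^0 - (0, \log(v/N))
$$
produces exactly the constant term demanded on the Eisenstein/theta side in every projection. This matching depends on the explicit Kronecker limit formula and the rational section $\Delta_N$ of $\widehat{\omega}_N$ constructed earlier in the paper; with those ingredients the individual coefficients of the $q$-expansion align with those of the relevant modular forms, and modularity of $\widehat{\phi}(\tau)$ follows by assembling the projected modularities.
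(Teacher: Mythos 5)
There is a genuine gap in your reduction step. You claim that modularity of $\widehat{\phi}(\tau)$ follows once its Gillet--Soul\'e pairing with each member of a ``spanning system'' (consisting of $\widehat{\omega}_N$, the cusps, the vertical components, and horizontal Heegner classes) is modular. But $\widehat{\CH}^1_\R(\mathcal X_0(N))$ is infinite dimensional: it contains the classes $a(f)=(0,f)$ for all smooth conjugation-invariant functions $f$ on $X_0(N)(\C)$, and your list does not span (nor could any finite list, nor does ``pairing with generators is modular'' imply modularity without an actual orthogonal decomposition whose projections you control). The paper's proof uses the decomposition of \cite[Propositions 4.1.2, 4.1.4]{KRYBook},
$$
\widehat{\CH}_\R^1(\mathcal X) = \widetilde{\MW}\oplus \bigl(\R \widehat{\Delta}_{\GS}  +\textstyle\sum_{p|N} \R \mathcal Y_p^\vee + \R a(1)\bigr) \oplus a(A^0(X)),
$$
and the component of $\widehat{\phi}$ in the infinite-dimensional piece $a(A^0(X))$ is handled by a separate argument: one expands $\phi_{SM}$ in eigenfunctions of the Laplacian and identifies each spectral coefficient $\langle \phi_{SM}, f_j\rangle$ with the theta integral $\int_{X_0(N)}\Theta_L(\tau,z)\bar f_j$, which is modular. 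This step is entirely missing from your proposal, and it is precisely where the archimedean data of the Kudla Green functions beyond the finitely many projections lives.

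A second, related omission: the paper does not run the decomposition against $\widehat{\omega}_N$ itself, because its Petersson metric has log singularities at the cusps and the hyperbolic measure leads to continuous spectrum on the open curve. Instead it introduces an auxiliary Gillet--Soul\'e Green function $g_{\GS}$ for $\Div(\Delta_N)$ (no log-log singularity), sets $\widehat{\Delta}_{\GS}=(\Div \Delta_N, g_{\GS})$, and performs the whole decomposition with respect to the smooth form $\mu_{\GS}=c_1(\widehat{\Delta}_{\GS})$ on the compact curve, so that the Laplacian has purely discrete spectrum; the modularity of $\langle\widehat\phi,\widehat{\Delta}_{\GS}\rangle$ then follows from Theorem \ref{theo:horizontal} because $\widehat{\Delta}_{\GS}$ differs from $\widehat{\Div}(\Delta_N)$ by an $a(f)$. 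Your plan of working throughout ``in the log-log framework'' against $\widehat{\omega}_N$ would force you to confront Eisenstein series in the spectral decomposition and functions with log-log growth, which is exactly what this device is designed to avoid. Your identification of the horizontal projection with the Gross--Kohnen--Zagier/Borcherds generating series and of $\langle\widehat\phi,\widehat\omega_N\rangle$ with $\mathcal E_L'(\tau,1)$ is correct and agrees with the paper, but without the spectral argument for $a(A^0(X))$ and the smoothing device the proof is incomplete.
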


 To prove the theorem,   we will need the decomposition theorem of the arithmetic Chow group $\widehat{\CH}_\R^1(\mathcal X_0(N))$ and some  arithmetic intersection formulas as in   \cite[Chapter 4]{KRYBook}.   These intersection  formulas are important themselves, which   we now describe  briefly.

 Let
$$
E_L(\tau, s) = \sum \limits_{ \gamma ^{\prime} \in \Gamma_{\infty} ^{\prime}\diagdown
\Gamma^{\prime}} \big(v^{\frac{s-1}{2}}e_{\mu_{0}} \big)\mid_{3/2} \gamma ^{\prime}
$$
be a vector valued Eisenstein series of weight $3/2$, where  the Petersson slash operator is defined on functions $f:\H\rightarrow \C[L^{\sharp} / L]$ by
 $$\big(f \mid_{3/2} \gamma ^{\prime}\big)(\tau)=\phi(\tau)^{-3}\rho_{L}^{-1}(\gamma ^{\prime})f(\gamma \tau),$$ where $\gamma ^{\prime}=(\gamma,
 \phi)\in \Gamma^{\prime} $. Let
\begin{equation}\label{vectormodularform}
\mathcal{E}_{L}(\tau,s)=- \frac{s}{4} \pi^{-s-1}\Gamma(s)\zeta^{(N)}(2s)N^{\frac{1}{2}+\frac{3}{2}s} E_L(\tau, s)
\end{equation}
be its normalization, where
$$
\zeta^{(N)}(s) = \zeta(s) \prod_{p|N} (1-p^{-s}).
$$
\begin{remark}
In the work \cite{KRYComp} and \cite{Yazagier}, the critic point of  Eisenstein series is $s=\frac{1}{2}$. In our paper, for the convenience of computation, we define $E_L(\tau, s)$ by a shift of s.
\end{remark}

The intersection formulas referred above are given by the following theorem.   The third formula is usually called an arithmetic Siegel-Weil formula while the first one (degree formula) is a geometric Siegel-Weil formula.

\begin{theorem} \label{maintheo} Let the notations be as above, then
\begin{align*}
\langle \widehat{\phi}(\tau), a(1) \rangle_{GS} &=\frac{1}{2} \deg(\widehat{\phi}(\tau))= \frac{1}{\varphi(N)}\mathcal E_L(\tau, 1),
\\
\langle \widehat{\phi}(\tau), \mathcal X_p^0 \rangle_{GS} &= \langle \widehat{\phi}(\tau), \mathcal X_p^\infty \rangle_{GS}=\frac{1}{\varphi(N)}\mathcal E_L(\tau, 1) \log p, \quad p|N
\end{align*}
and
$$
\langle \widehat{\phi}(\tau), \widehat{\omega}_N \rangle_{GS} =\frac{1}{\varphi(N)}\bigg(\mathcal E_L'(\tau, 1)-\sum_{p|N} \frac{p}{p-1} \mathcal E_L(\tau, 1) \log p \bigg).
$$

Here $a(1)=(0, 1) \in \widehat{\CH}_\R^1(\mathcal X_0(N))$.
\end{theorem}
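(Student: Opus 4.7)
The plan is to prove all three identities coefficient-by-coefficient in the Fourier expansion. Since the Gillet--Soul\'e pairing is linear in the first argument and $\widehat\phi(\tau) = \sum_{\mu,n} \widehat{\mathcal Z}(n,\mu,v) q_\tau^n e_\mu$, each identity reduces to matching, for every $(n,\mu)$, the arithmetic invariant $\langle \widehat{\mathcal Z}(n,\mu,v), \xi\rangle_{GS}$ with the corresponding Fourier coefficient of $\mathcal E_L(\tau,1)$ or $\mathcal E_L'(\tau,1)$, where $\xi\in\{a(1),\mathcal X_p^0,\widehat\omega_N\}$. The coefficients of $\mathcal E_L(\tau,s)$ admit explicit formulas from adelic unfolding, and the normalization in~(\ref{vectormodularform}) is chosen precisely so the local Whittaker densities on the analytic side match the local counting on the geometric side. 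Throughout I would distinguish the ranges $n>0$ (horizontal CM cycle), $n<0$ (purely archimedean, with cuspidal corrections when $D=-4Nn$ is a square), and $n=0$ (the normalization term involving $\widehat\omega_N$ and vertical fibers).

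The first identity is a geometric Siegel--Weil formula on $X_0(N)$: for $n>0$, $\deg\mathcal Z(n,\mu)$ equals the weighted count $\#(\Gamma_0(N)\backslash L_\mu[n])$, which reproduces the non-holomorphic Fourier coefficient of $\frac{1}{\varphi(N)}\mathcal E_L(\tau,1)$ by the classical Siegel--Weil formula for the signature $(1,2)$ space $V$; for $n<0$ the horizontal part vanishes and the Green function $\Xi(n,\mu,v)$ contributes via its constant term along $\H$; for $n=0$ the constant term is computed using $g(0,0,v)$ and the explicit correction $-(0,\log(v/N))$ built into $\widehat{\mathcal Z}(0,0,v)$. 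The second identity is then reduced to the first: since $\mathcal X_p^0$ is vertical, its intersection with the horizontal part of $\mathcal Z(n,\mu)$ measures how many geometric fibers of $\mathcal Z(n,\mu)$ specialize to $\mathcal X_p^0$, multiplied by $\log p$. For squarefree $N$, the Deligne--Rapoport description at $p\mid N$ identifies $\mathcal X_p^0$ and $\mathcal X_p^\infty$ as two copies of $X_0(N/p)$ meeting at supersingular points, and the symmetry between the two components shows each horizontal point contributes with multiplicity $\tfrac{1}{2}$ to each; the archimedean contributions to $\langle\widehat{\mathcal Z}(n,\mu,v),\mathcal X_p^0\rangle_{GS}$ vanish since $\mathcal X_p^0$ has trivial generic fiber, giving the claimed formula.

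For the third identity I would decompose the Gillet--Soul\'e pairing into its archimedean and finite parts. The rational section $\Delta_N$ of $\omega_N^{\otimes k}$ constructed earlier lets one represent $c_1(\widehat\omega_N)$ via its Poincar\'e--Lelong equation: the archimedean pairing becomes an integral of $\Xi(n,\mu,v)$ against the Chern form of $\widehat\omega_N$ plus a boundary term from the cusps. Recognizing $\sum_{n,\mu}\Xi(n,\mu,v)q_\tau^n e_\mu$ as a regularized theta integral against the Kudla--Millson kernel for $L$, I would apply the Bruinier--Funke theta-lifting calculation of~\cite{BF1} (adapted to level $N$ with the cusp analysis of Sections~\ref{sect:KudlaGreenFunction} and~\ref{sect:ModularCurve}) to unfold this integral into $\frac{1}{\varphi(N)}\mathcal E_L'(\tau,1)$ up to explicit cusp contributions governed by the Kronecker limit formula for $\Delta_N$. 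On the finite side, for $p\nmid N$ the intersection $\mathcal Z(n,\mu)\cdot\widehat\omega_N$ is the standard Faltings height of CM points on elliptic curves (using the divisor of $\Delta_N$), and for $p\mid N$ the bad-fiber contribution produces precisely the $\sum_{p\mid N}\tfrac{p}{p-1}\mathcal E_L(\tau,1)\log p$ correction after combining with the $-\sum_{p\mid N}\mathcal X_p^0$ term in $\widehat{\mathcal Z}(0,0,v)$ via the second identity just established.

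The main obstacle is the archimedean analysis near the cusps. When $D=-4Nn$ is a nonzero square, $\Xi(n,\mu,v)$ has honest singularities along the cusps; when $D=0$ it has log-log singularity; and $\widehat\omega_N$ carries a log-singular Pettersson metric there. The Gillet--Soul\'e pairing must therefore be interpreted in the generalized arithmetic Chow theory of Burgos--Kramer--K\"uhn, and verifying that the boundary terms coming from these two singularities combine with the correction $-2\widehat\omega_N-\sum_{p\mid N}\mathcal X_p^0-(0,\log(v/N))$ in $\widehat{\mathcal Z}(0,0,v)$ to produce a clean Fourier coefficient of the Eisenstein side is the most delicate bookkeeping of the proof. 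This is where the explicit cusp-behavior analysis of $\Xi(n,\mu,v)$ developed earlier, together with the explicit Kronecker limit formula for $\Delta_N$, are indispensable.
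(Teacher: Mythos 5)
Your outline of the first and third identities is close to the paper's actual route, with two caveats. The degree formula is obtained there not by matching Fourier coefficients against a classical Siegel--Weil count but directly from the theta-lift identity $I(\tau,1)=\frac{2}{\varphi(N)}\mathcal E_L(\tau,1)$ (Corollary \ref{cor:Value}, i.e.\ the residue at $s=1$ of Theorem \ref{mainresult1}) together with $\deg\widehat{\phi}=\langle\widehat{\phi},a(2)\rangle=I(\tau,1)$; your coefficient-by-coefficient matching would additionally require Funke's explicit formulas for $\deg Z(n,\mu)$ and for the coefficients of $\mathcal E_L(\tau,1)$. The third identity is indeed reduced, via $\widehat{\Div}(\Delta_N)=\widehat{\Delta}_N-k\sum_{p|N}\frac{p}{p-1}\mathcal X_p^0$ and the second identity, to $\langle\widehat{\phi},\widehat{\Delta}_N\rangle=12\mathcal E_L'(\tau,1)$, proved by the Bruinier--Funke lift of $\log\|\Delta_N\|$ and the Kronecker limit formula, as you propose. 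But note that $\widehat{\Delta}_N$ has horizontal divisor supported on the cusp $\mathcal P_\infty$, which never meets the CM cycles at finite places, so there is no Faltings-height computation at $p\nmid N$ at all: the finite intersection is purely vertical. If a nontrivial Faltings height really appeared on the finite side, the archimedean theta integral alone could not account for the full derivative $\mathcal E_L'(\tau,1)$, and your proposed decomposition would not close.

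The genuine gap is in the second identity. The claim that ``each horizontal point contributes with multiplicity $\tfrac12$ to each'' component is false pointwise: a CM point with ordinary reduction at $p$ lies on exactly one of $\mathcal X_p^0$, $\mathcal X_p^\infty$ (according to whether the $p$-part of the kernel of the cyclic isogeny is connected or \'etale), and only supersingular points lie on both. What is true is the equality of the \emph{total} pairings, and the mechanism --- absent from your sketch --- is the Fricke involution $w_N$: one checks $w_N^*\Xi(n,\mu,v)=\Xi(n,\mu,v)$, $w_N^*\mathcal Z(n,\mu,v)^{\Naive}=\mathcal Z(n,\mu,v)^{\Naive}$, and (using $w_N^*\widehat{\Delta}_N=\widehat{\Delta}_N^0$ together with Lemma \ref{divisorlemma}) that the $n=0$ modification is also $w_N$-invariant, so that $w_N^*\widehat{\phi}=\widehat{\phi}$ while $w_N$ swaps $\mathcal X_p^0\leftrightarrow\mathcal X_p^\infty$. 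This yields $\langle\widehat{\phi},\mathcal X_p^0\rangle=\langle\widehat{\phi},\mathcal X_p^\infty\rangle=\tfrac12\langle\widehat{\phi},(0,2\log p)\rangle=\tfrac12\deg\widehat{\phi}\cdot\log p$, the last step using that $\widehat{\Div}(p)=(\mathcal X_p,-2\log p)$ is principal. Without this symmetry your route would force you to determine the reduction type of every CM point at each $p\mid N$, and to compute separately $\langle\widehat{\omega}_N,\mathcal X_p^0\rangle$, $\langle\mathcal X_p^0,\mathcal X_p^0\rangle$ and the cusp--component intersections entering the $n\le 0$ terms, none of which your proposal addresses.
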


There are three main  ingredients in the proof of Theorem \ref{maintheo}. The first is  to analyze and  understand the behavior of Kudla's Green function $\Xi(n, \mu, v)$ for all pairs $(n, \mu) \in \Q \times L^\sharp/L$ with $Q(\mu) \equiv n \pmod 1$, in particular when $D=-4Nn \ge 0$ is a square. Here  $v>0$ is a constant. This occupies full Section \ref{sect:KudlaGreenFunction} (general case)  and the first part of Section \ref{sect:ModularCurve}. The upshot is an honest definition of the arithmetic divisors $\widehat{\mathcal Z}(n, \mu, v)^{\Naive}$ in Theorem \ref{theo:SpecialGreen}, its modification $\widehat{\mathcal Z}(n, \mu, v)$  in (\ref{eq:ArithmeticDivisor}),  and the generating function $\widehat{\phi}(\tau)$ above.

To understand $\widehat{\omega}_N$, we actually construct an explicit rational section of $\omega_N^k$, which is isomorphic to the line bundle of modular forms of weight $k$ for $k=12 \varphi(N)$ (the Euler $\varphi$-function),   i.e.,  an explicit modular form $\Delta_N$ of weight $k$  for $\Gamma_0(N)$ as follows:
\begin{equation} \label{eq:DeltaN}
\Delta_N(z) =\prod_{t|N} \Delta(t z) ^{a(t)}
\end{equation}
with
$$
a(t) = \sum_{r|t} \mu(\frac{t}r) \mu(\frac{N}r) \frac{\varphi(N)}{\varphi(\frac{N}r)},
$$
where $\mu(n)$ is the the M\"obius function. This is inspired by K\"uhn's early work on self-intersection  of $\widehat{\omega}_N$ with $N=1$ using the well-known Delta function $\Delta$.  One complication here is that $\Delta_N$ has vertical components, see Lemma \ref{divisorlemma}. This means that we will need to deal with self-intersections of vertical components (see  Section \ref{sect:Proof}).

These ingredients are enough  for the first two identities of Theorem \ref{maintheo}. To prove the last identity, we further need to compute the infinity part of the arithmetic intersection, which boils down essentially to self-intersection of $\widehat{\omega}_N$, intersection of vertical components, and  the following integral, which can  be viewed as a  theta lifting:
\begin{equation}
I(\tau,  \log\|\Delta_N\|) =\int_{X_0(N)} \log\|\Delta_N\| \Theta_L(\tau, z).
\end{equation}
Here
 $\Theta_L(\tau,z)$ is  the two variable geometric theta kernel of Kudla and Millson defined by (\ref{eq:ThetaCoefficient}), and  the Petersson norm is renormalized as
 \begin{equation} \label{eq:Petersson}
\| f(z)\| = |f(z) (4 \pi e^{-C} y)^{\frac{k}2}|= e^{-\frac{kC}2} \parallel
 f(z) \parallel_{Pet},
\end{equation} with $C=\frac{\log 4\pi +\gamma}{2}$.
 The theta   function $\Theta_L(\tau, z)$  is a vector valued modular form for $\tau$ of weight $3/2$ and modular function for the variable $z$  valued in $\Omega^{1, 1}(X_0(N))$  for $\Gamma_0(N)$.

 To connect this integral with $\mathcal E_L'(\tau, 1)$, we follow Bruinier and Funke's idea in  \cite{BF1} in two steps, given by the following two theorems, which are of independent  interest.

\begin{theorem}(Theta lifting of Eisenstein series) \label{mainresult1} Let
\begin{equation}
E(N, z, s)= \sum_{\gamma \in \Gamma_{\infty} \diagdown \Gamma_{0}(N)} (\Im {(\gamma z) })^{s},
\end{equation}
be the Eisenstein series of weight $0$ for $\Gamma_0(N)$, and let
\begin{equation}\label{normailizedeisenstein}
\mathcal{E}(N,z,s):= N^{2s}\pi^{-s}\Gamma(s)\zeta^{(N)}(2s)E(N, z, s)
\end{equation}
be its normalization. Then
 \begin{eqnarray}
 {I}(\tau, \mathcal{E}(N,z,s))=I(\tau, \mathcal E(N, w_N z, s))= \zeta^{\ast}(s)\mathcal{E}_{L}(\tau,s),\nonumber
 \end{eqnarray}
 where $w_N =\kzxz {0} {-1} {N} {0}$ and $\zeta^{\ast}(s)= \pi^{-\frac{s}{2}}\Gamma(\frac{s}{2})\zeta(s).$
\end{theorem}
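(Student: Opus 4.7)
The plan is to use the standard unfolding method adapted from Bruinier--Funke \cite{BF1} to the $\Gamma_0(N)$ setting. Because $\Theta_L(\tau,z)$ is $\Gamma_0(N)$-invariant in $z$ and $E(N,z,s)$ is the Poincar\'e series of $\Im(z)^s$ summed over $\Gamma_\infty\backslash\Gamma_0(N)$, for $\Re(s)$ sufficiently large the integral defining $I(\tau,E(N,z,s))$ unfolds to
\begin{equation*}
\int_0^\infty\int_0^1 y^s\,\Theta_L(\tau,x+iy)\,\frac{dx\,dy}{y^2}.
\end{equation*}
The inner $x$-integration picks out the constant Fourier coefficient (in $x$) of the theta kernel at the cusp $\infty$, and the outer $y$-integration is its Mellin transform.

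The key step is therefore to compute the constant term of $\Theta_L(\tau,z)$ at $\infty$. This cusp corresponds to the rational isotropic line spanned by $\ell_\infty=\kzxz{0}{-1/N}{0}{0}\in L^\sharp$; I would decompose each theta series $\theta_{L_\mu}$ according to the projection to $L_\mu/\mathbb{Z}\ell_\infty$, apply Poisson summation in the two directions transverse to $\ell_\infty$, and extract the $x$-independent terms. Only vectors in the $\mu_0=0$ coset proportional to $\ell_\infty$ contribute an honest constant term in $x$, producing, after the Mellin transform, the ``leading'' piece $v^{(s-1)/2}e_{\mu_0}$; the orbits of non-isotropic vectors reassemble, through their $\tau$-Fourier coefficients, into translates of this leading piece under $\Gamma'_\infty\backslash\Gamma'$, yielding $E_L(\tau,s)$. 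Tracking the width of the cusp, the discriminant $|L^\sharp/L|=2N$, and the standard formulas for $\rho_L$ on the generators of $\Gamma'$ will produce the factors $\pi^{-s/2}\Gamma(s/2)$, $\zeta^{(N)}(2s)$, and the power $N^{\frac{1}{2}+\frac{3s}{2}}$ prescribed in (\ref{vectormodularform}), so that the normalized identity $I(\tau,\mathcal{E}(N,z,s))=\zeta^*(s)\mathcal{E}_L(\tau,s)$ falls out on the nose.

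For the second equality, a direct computation shows that the Fricke involution $w_N$ preserves $L$ and acts on $L^\sharp/L\cong\mathbb{Z}/2N\mathbb{Z}$ by $r\mapsto-r$, an involution commuting with $\rho_L$ and fixing the distinguished vector $e_{\mu_0}$. Consequently $\Theta_L(\tau,w_Nz)$ equals the image of $\Theta_L(\tau,z)$ under $e_\mu\mapsto e_{-\mu}$, so after the change of variable $z\mapsto w_N^{-1}z$ in the defining integral, the lift of $\mathcal{E}(N,w_Nz,s)$ equals the image of $I(\tau,\mathcal{E}(N,z,s))$ under this sign involution. Since $\mathcal{E}_L(\tau,s)$ is built from $e_{\mu_0}$, it is fixed by $e_\mu\mapsto e_{-\mu}$, and the two lifts coincide.

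The main obstacle I anticipate is the explicit Poisson summation transverse to $\ell_\infty$ and the matching of the resulting series with the Fourier expansion of $E_L(\tau,s)$ at the metaplectic cusp $\infty$; in particular one must verify that the non-isotropic contribution, which is a priori a sum over a three-dimensional lattice, collapses after Mellin transform into the Eisenstein series attached to $e_{\mu_0}$ with exactly the normalization given in (\ref{vectormodularform}). Beyond this, the interchange of summation and integration (valid for $\Re(s)>1$) and the extension by meromorphic continuation are routine, as is the bookkeeping of the powers of $N$ coming from the cusp width, the lattice level, and the Weil-representation normalization.
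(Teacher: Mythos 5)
Your overall strategy --- unfold against $E(N,z,s)$, apply Poisson summation to the theta kernel, and recognize the result as $\zeta^{\ast}(s)\mathcal{E}_L(\tau,s)$ --- is the strategy of the paper, and your treatment of the Fricke involution is sound (the paper gets $\Theta_L(\tau,z)=\Theta_L(\tau,w_Nz)$ even more directly from $\varphi(-w,\tau,z)=\varphi(w,\tau,z)$ together with $w_N L_\mu w_N^{-1}=L_{-\mu}$). But the step you defer as ``the main obstacle'' is the entire content of the proof, and the mechanism you propose for it is misdirected. First, the Poisson summation should be taken \emph{along} $\ell_\infty$, i.e.\ in the single variable $w_2$ (the coefficient of $\ell_\infty$ in $w$), not ``in the two directions transverse to $\ell_\infty$'': by (\ref{formula1}) the quantity $(w,w(z))$ is linear in $w_2$ with $x$-independent coefficient $\sqrt{N}/y$, so Poisson in $w_2$ turns the $x$-dependence into explicit exponentials and produces the Gaussian $\exp(-\tfrac{\pi Ny^2}{v}|m+w_3\tau|^2)$ in the dual variable $m$; Poisson in the $w_1,w_3$ directions does not produce this structure. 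Second, the assertion that ``only vectors proportional to $\ell_\infty$ in the zero coset contribute an honest constant term in $x$'' is false as stated --- every $w$ contributes to $\int_0^1\Theta_L\,dx$ --- and trying to identify the remaining contribution with $E_L(\tau,s)$ by matching Fourier expansions after the Mellin transform would require the full expansion of $E_L(\tau,s)$, which is a substantial computation you do not outline and which the paper never performs.

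The device your sketch is missing is that the paper reorganizes the theta kernel \emph{before} unfolding: after the one-variable Poisson summation, the sum over $(m,w_3)\in\Z^2\setminus\{(0,0)\}$ is written as $n\cdot(d,c)$ with $n\ge 1$ and $\gcd(c,d)=1$, and the remaining $w_1$-sum is packaged into the auxiliary two-variable theta function $\Theta_L(\tau,\alpha,\beta)$, whose transformation law (\ref{eq2.11}) converts each coprime pair into a coset $\gamma'\in\Gamma'_\infty\backslash\Gamma'$ acting through $\rho_L^{-1}(\gamma')$ and the automorphy factor $(c\bar\tau+d)^{3/2}$. With the kernel in this form the unfolded integrals are elementary: $\int_0^1\Theta_L(\gamma'\tau,nx,0)\,dx=e_{\mu_0}$, the $y$-integral gives $\Gamma(\tfrac{s}{2}+1)$, and the sum over $n$ gives $\zeta(s)$, so the identity falls out with exactly the normalization of (\ref{vectormodularform}). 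Without this reorganization, or an equally explicit substitute for it, your proposal does not close.
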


\begin{theorem} (Kronecker Limit formula for $\Gamma_0(N)$) \label{firstlimitformula}   Let
the notations be as above, then
  one has
$$
\lim _{s\rightarrow 1} \bigg( \mathcal{E}(N, z, s)-\varphi(N)\zeta^{\ast}(2s-1)\bigg) \nonumber\\
=-\frac{1}{12}\log \big(y^{6\varphi(N)}\mid \Delta_{N}(z)\mid\big), \nonumber
$$
and
$$
\lim _{s\rightarrow 1} \bigg( \mathcal{E}(N, w_N z, s)-\varphi(N)\zeta^{\ast}(2s-1)\bigg) \nonumber\\
=-\frac{1}{12}\log \big(y^{6\varphi(N)}\mid \Delta_{N}^0(z)\mid\big),
$$
where $\Delta_N^0 =\Delta_N|w_N$.
\end{theorem}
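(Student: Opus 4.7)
The plan is to reduce to the classical first Kronecker limit formula on $\SL_2(\Z)$ by expressing $E(N,z,s)$ as an explicit $\Z$-linear combination of shifted $\SL_2(\Z)$-Eisenstein series $\widetilde E(uz,s)$ with $u\mid N$, exploiting that $N$ is squarefree.

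First I would decompose the sum defining $E(N,z,s)$ over coprime pairs $(c,d)$ with $N\mid c$ by dropping the coprimality condition, extracting $k=\gcd(c,d)$, and further extracting $t=\gcd(N,k)$. Because $N$ is squarefree, the inner sum over $k$ with $\gcd(N,k)=t$ factors as $t^{-2s}\zeta^{(N/t)}(2s)$, and Möbius inversion then produces the identity
\begin{equation*}
N^{2s}\zeta^{(N)}(2s)\,E(N,z,s) \;=\; \tfrac{1}{2}\sum_{u\mid N}\mu(N/u)\,u^{s}\,\widetilde E(uz,s),
\end{equation*}
where $\widetilde E(z,s):=\sum_{(m,n)\ne(0,0)}y^{s}/|mz+n|^{2s}=2\zeta(2s)E(z,s)$ is the non-primitive $\SL_2(\Z)$-Eisenstein series. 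Multiplying by $\pi^{-s}\Gamma(s)$ converts the left hand side into $\mathcal E(N,z,s)$.

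Second, I would substitute the classical Kronecker expansion
\begin{equation*}
\widetilde E(z,s)=\frac{\pi}{s-1}+2\pi\bigl(\gamma-\log 2-\log(\sqrt{y}\,|\eta(z)|^{2})\bigr)+O(s-1)
\end{equation*}
(applied with $z\leftarrow uz$) into each summand and expand $u^{s}=u+u(s-1)\log u+\cdots$. The key cancellation is that the $+\pi u\log u$ produced by multiplying $u^{s}$ against the pole $\pi/(s-1)$ cancels the $-\pi u\log u$ contained in $-\pi\log(uy)$. Using the Möbius identity $\sum_{u\mid N}\mu(N/u)u=\varphi(N)$, the sum therefore collapses, near $s=1$, to
\begin{equation*}
\varphi(N)\Bigl[\tfrac{\pi}{s-1}+2\pi\gamma-2\pi\log 2-\pi\log y\Bigr]-\tfrac{\pi}{6}\log\Bigl|\prod_{u\mid N}\Delta(uz)^{\mu(N/u)\,u}\Bigr|+O(s-1).
\end{equation*}
A short check using $\mu(N/r)=\mu(N/t)\mu(t/r)$ (valid because $N$ is squarefree and $r\mid t\mid N$) together with $\sum_{r\mid t}\varphi(r)=t$ shows that the exponent $a(t)$ from (\ref{eq:DeltaN}) equals $\mu(N/t)\,t$ for every $t\mid N$, so the displayed product equals $|\Delta_N(z)|$ on the nose.

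To finish, I would Taylor-expand $\pi^{-s}\Gamma(s)=\pi^{-1}\bigl(1+(-\log\pi-\gamma)(s-1)+\cdots\bigr)$ and $\zeta^{\ast}(2s-1)=\tfrac{1}{2(s-1)}+\tfrac{\gamma}{2}-\tfrac{\log\pi}{2}-\log 2+O(s-1)$ about $s=1$. Assembling, both $\mathcal E(N,z,s)$ and $\varphi(N)\zeta^{\ast}(2s-1)$ share the polar part $\varphi(N)/(2(s-1))$, and the constants involving $\gamma$, $\log\pi$, and $\log 2$ cancel exactly upon subtraction, leaving $-\tfrac{1}{12}\log(y^{6\varphi(N)}|\Delta_N(z)|)$. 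The companion formula at the cusp $0$ is obtained by running the identical argument with $z$ replaced by $w_Nz$ throughout; by the definition $\Delta_N^{0}:=\Delta_N|w_N$, the resulting identity is exactly the second half of the theorem. The only conceptual step is the Möbius inversion; the main technical delicacy is bookkeeping the $\gamma$, $\log\pi$, and $\log 2$ constants carefully enough that they truly cancel in the subtraction.
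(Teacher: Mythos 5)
Your proposal is correct in substance, but it takes a genuinely different route from the paper. The paper proves the theorem by brute-force Fourier expansion: it writes $E(N,z,s)$ as a sum over $(m,n)$ with $(n,N)=1$, computes the constant term via Whittaker functions (producing the pole $\tfrac{\varphi(N)}{2(s-1)}$, the $-\tfrac{\varphi(N)}{2}\log y$, and the linear term $\tfrac{\pi}{6}yN\prod_p(1+p^{-1})\varphi(N)$), evaluates the $k$-th coefficient at $s=1$ as $\tfrac{1}{k}\sum_{n\mid k}nC_N(n)$ using the Ramanujan sums of Lemma \ref{lem:RamanujanSum}, and then matches the resulting $q$-series against the product expansion $\Delta_N=q_z^{r\varphi(N)}\prod_n(1-q_z^n)^{24C_N(n)}$ of Proposition \ref{prop: DeltaN}. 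You instead reduce to the classical first limit formula on $\SL_2(\Z)$ via the M\"obius decomposition $N^{2s}\zeta^{(N)}(2s)E(N,z,s)=\tfrac12\sum_{u\mid N}\mu(N/u)u^s\widetilde E(uz,s)$; I checked this identity (it follows from extracting $\gcd(c,d)$ and inverting, and in fact does not require $N$ squarefree), and your cancellation of the $u\log u$ terms and of the $\gamma,\log\pi,\log 2$ constants against $\varphi(N)\zeta^*(2s-1)$ all work out to the stated right-hand side. What your route buys is brevity and the clean identification $a(t)=\mu(N/t)\,t$ for squarefree $N$; what the paper's route buys is (i) validity for general $N$ --- note the theorem sits in Part I, before the squarefree hypothesis is imposed, whereas your verification that $\prod_u\Delta(uz)^{\mu(N/u)u}=\Delta_N(z)$ explicitly uses $\varphi(N)/\varphi(N/r)=\varphi(r)$ and $\mu(N/r)=\mu(N/t)\mu(t/r)$, which need $N$ squarefree --- and (ii) the explicit Ramanujan-sum $q$-expansion of $\mathcal E(N,z,s)$ at $s=1$, which the paper reuses (e.g.\ in Propositions \ref{prop: DeltaN} and \ref{prop:CuspBehavoir}). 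One small bookkeeping remark: your intermediate display carries $\varphi(N)[\pi/(s-1)+\cdots]$ and $-\tfrac{\pi}{6}\log|\prod\Delta(uz)^{\mu(N/u)u}|$, which is the value of $\sum_u\mu(N/u)u^s\widetilde E(uz,s)$ \emph{without} the prefactor $\tfrac12$; you must reinstate that $\tfrac12$ (together with $\pi^{-s}\Gamma(s)$) in the final assembly to land on $\tfrac{\varphi(N)}{2(s-1)}$ and the coefficient $-\tfrac1{12}$, which your last paragraph implicitly does.
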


Combining the previous theorems, we   obtain

\begin{theorem}\label{derivatives} One has
$$
I(\tau, 1) = \frac{2}{\varphi(N)} \mathcal E_L(\tau, 1)
$$
and
$$
 {I}(\tau, \log \parallel \Delta_{N} \parallel)=  I(\tau, \log\| \Delta_N^0\|)= - 12 \mathcal{E}_{L}^{\prime}(\tau,1).
 $$
\end{theorem}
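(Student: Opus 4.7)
The plan is to deduce both identities from Theorem \ref{mainresult1} by plugging in the Laurent expansion at $s=1$ provided by Theorem \ref{firstlimitformula}, and reading off the residue and the constant term. To set up, I would first compute the local expansion
$$\zeta^{\ast}(s) = \frac{1}{s-1} + (\gamma - C) + O(s-1), \qquad C = \frac{\log 4\pi + \gamma}{2},$$
from $\zeta^{\ast}(s) = \pi^{-s/2}\Gamma(s/2)\zeta(s)$, using $\zeta(s) = (s-1)^{-1} + \gamma + O(s-1)$ and $\psi(1/2) = -\gamma - 2\log 2$. In particular $\zeta^{\ast}(2s-1) = \tfrac{1}{2(s-1)} + (\gamma - C) + O(s-1)$, and Theorem \ref{firstlimitformula} rephrases as
$$\mathcal E(N, z, s) = \varphi(N)\zeta^{\ast}(2s-1) - \tfrac{1}{12}\log\!\bigl(y^{6\varphi(N)}|\Delta_N(z)|\bigr) + O(s-1).$$

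Applying $I(\tau, \cdot)$ to this expansion and invoking Theorem \ref{mainresult1} would give
$$\zeta^{\ast}(s)\mathcal E_L(\tau, s) = \varphi(N)\zeta^{\ast}(2s-1)\, I(\tau, 1) - \tfrac{1}{12}\, I\!\bigl(\tau, \log(y^{6\varphi(N)}|\Delta_N|)\bigr) + O(s-1).$$
Comparing residues at $s=1$ gives $\mathcal E_L(\tau, 1) = \tfrac{\varphi(N)}{2}\, I(\tau, 1)$, which is the first identity. Extracting the constant term at $s=1$ and eliminating $I(\tau, 1)$ via this first identity produces
$$I\!\bigl(\tau, \log(y^{6\varphi(N)}|\Delta_N|)\bigr) = -12\mathcal E_L'(\tau, 1) + 12(\gamma - C)\mathcal E_L(\tau, 1).$$

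To pass from $\log(y^{6\varphi(N)}|\Delta_N|)$ to $\log\|\Delta_N\|$, I would invoke the definition of the renormalized Petersson norm in (\ref{eq:Petersson}), which gives $\log\|\Delta_N\| = \log(y^{6\varphi(N)}|\Delta_N|) + 6\varphi(N)\log(4\pi e^{-C})$. Combining this with $I(\tau, 1) = \tfrac{2}{\varphi(N)}\mathcal E_L(\tau, 1)$, the extra term contributes $12\log(4\pi e^{-C})\mathcal E_L(\tau, 1)$, and the total coefficient of $\mathcal E_L(\tau, 1)$ is
$$12(\gamma - C) + 12\log(4\pi e^{-C}) = 12\gamma - 24C + 12\log 4\pi = 0,$$
precisely by the choice $C = (\log 4\pi + \gamma)/2$. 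Thus $I(\tau, \log\|\Delta_N\|) = -12\mathcal E_L'(\tau, 1)$, and the same computation applied to the second formula of Theorem \ref{firstlimitformula} (using the second equality in Theorem \ref{mainresult1}) yields the identity for $\log\|\Delta_N^0\|$.

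The main obstacle will be justifying the interchange of $I(\tau, \cdot)$ with the limit $s \to 1$: since $\log\|\Delta_N\|$ has log-log singularity at the cusps, one needs uniform integrability bounds for $\Theta_L(\tau, z)\bigl(\mathcal E(N, z, s) - \varphi(N)\zeta^{\ast}(2s-1)\bigr)$ near each cusp, uniformly in $s$ close to $1$. This rests on the detailed cusp analysis of the theta kernel and the Kudla Green functions carried out earlier in the paper; once that is in hand, the remainder is bookkeeping with Laurent expansions, with the specific normalization constant $C$ being exactly what makes the spurious $\mathcal E_L(\tau, 1)$ contributions cancel.
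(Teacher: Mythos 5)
Your proposal is correct and follows essentially the same route as the paper: both deduce the first identity by comparing residues at $s=1$ in the theta-lift identity $I(\tau,\mathcal E(N,z,s))=\zeta^{\ast}(s)\mathcal E_L(\tau,s)$ (this is exactly Corollary \ref{cor:Value}), and obtain the second by applying $I(\tau,\cdot)$ to the Kronecker limit formula and matching constant terms of the Laurent expansions, with the normalization constant $C=\tfrac{\log 4\pi+\gamma}{2}$ in the Petersson norm cancelling the residual $\mathcal E_L(\tau,1)$ contribution exactly as you compute. The only difference is that you explicitly flag the interchange of $I(\tau,\cdot)$ with the limit $s\to 1$ as needing uniform bounds near the cusps, a point the paper passes over silently; that is a reasonable extra care, justified by the exponential decay of $\Theta_L(\tau,z)$ at the cusps from Proposition \ref{convergence}.
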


This paper is organized in two parts as follows. In Part I,  we prove Theorem \ref{mainresult1} after setting up notation and introduce the  theta lifting (\ref{eq:ThetaLifting}) in Section \ref{sect:ThetaLifting}. In Section \ref{sect:Kronecker}, we study some basic properties of $\Delta_N$ and prove the Kronecker limit formula Theorem \ref{firstlimitformula} and then Theorem \ref{derivatives}. We also prove some properties of $\Delta_N$ needed in Part II.

In  Part II, we first review arithmetic divisors with log-log singularity, metrized line bundles with log singularity, and arithmetic intersection  in Section \ref{sect:ArithIntersectionReview} following K\"uhn \cite{Kuhn2} and   Burgos Gil,  Kramer and  U.  K\"uhn \cite{BKK}. In Section \ref{sect:KudlaGreenFunction}, we study the behaviors of Kudla's Green functions at cusps in a more general setting (see Theorem \ref{theo:Singularity}). In Section \ref{sect:ModularCurve}, we focus on the modular curve $\mathcal X_0(N)$ for square free $N$, and prove Theorem \ref{theo:SpecialGreen}. We also decompose Theorem \ref{maintheo} into two results, which we prove in Section \ref{sect:ModularCurve}, and a `horizontal intersection' theorem  Theorem \ref{theo:horizontal}, which we will prove in Section \ref{sect:Proof}. In last section, we will prove the modularity theorem (Theorem \ref{theo:modularity}).

Finally, we remark that the technical condition $N$ being square free is only needed in the arithmetic part, mainly  to avoid the complication of special fiber of $\mathcal X_0(N)$ at  $p^2|N$ when $N$ is not square free. A different way to prove the modularity theorem is to first prove a modularity theorem for a similar generating function with Bruinier's Green functions  and then to show the generating function of the difference of the two Green functions is modular as done in recent work of Ehlen and Sankaran \cite{ES}.

\textbf{Acknowledgments.} Add later.

\part{Theta lifting  and Kronecker limit formula}

\section{Basic set-up and theta lifting } \label{preliminaries} \label{sect:ThetaLifting}

 Let  \begin{equation}
V=\bigg\{w=\left(
  \begin{array}{cc}
    w_1 &  w_2  \\
      w_3 &  -w_1 \\
  \end{array}
\right)  \in M_{2}(\Q) : \tr(w) =0\bigg\},
\end{equation}
with the quadratic form $Q(w)=N \det{w}=-Nw_2w_3-Nw_1^{2}$, which  has signature $(1, 2)$. Let $L$ be the even integral lattice defined in the introduction with dual lattice $L^\sharp$. We will identify
$$
\Z/2N\Z \cong L^\sharp/L,  \quad r\mapsto \mu_r = \kzxz {\frac{r}{2N}} {0} {0} {-\frac{r}{2N}}.
$$
 Let $G= \SL_2\cong \Spin(V)$ act on $V$  by conjugation, i.e.,
 $g\cdot w=gwg^{-1}$. Notice that $\Gamma_0(N)$ preserves $L$ and acts on $L^\sharp/L$ trivially.
Let  $\D$ be the Hermitian domain of  positive real lines in $V_\R$:
$$
\D=\{z \subset V_{\R}; \dim z=1 ~and~(~,~) \mid_{z} >0\}.
$$
The following lemma can be easily checked and is left to the reader.

\begin{lemma}  \label{lem2.1} For $z=x+ iy$, define
$$w(z) = \frac{1}{\sqrt{N} y} \left(
  \begin{array}{cc}
  -x  & z\overline{z}\\
     -1&x\\
  \end{array}
\right).
$$
Then $z \mapsto [w(z)]=\R w(z)$ gives an $G(\R)$-invariant isomorphism between the upper half plane $\H$ and $\mathbb D$. It induces thus an
isomorphism between
$Y_0(N) =\Gamma_0(N) \backslash \H$ and $\Gamma_0(N) \backslash \mathbb D$.
\end{lemma}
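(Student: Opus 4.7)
The plan is to verify the four assertions packaged into the lemma in sequence by direct computation. First, I would check that $w(z)$ lies in $V_\R$ and spans a positive-definite line. The trace is visibly zero, so $w(z) \in V_\R$. For the norm, a routine determinant computation gives
\[
\det w(z) = \frac{1}{Ny^{2}}\bigl(-x^{2} + z\bar z\bigr) = \frac{1}{Ny^{2}}\bigl(-x^{2} + x^{2} + y^{2}\bigr) = \frac{1}{N},
\]
hence $Q(w(z)) = N\det w(z) = 1 > 0$, so $[w(z)] \in \D$.

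Second, I would establish $G(\R) = \SL_2(\R)$-equivariance, i.e.\ $\gamma\, w(z)\, \gamma^{-1} = w(\gamma z)$ for all $\gamma = \kabcd \in \SL_2(\R)$. The cleanest route is to observe that both sides lie in $V_\R$ with $Q = 1$, so they agree up to a sign; a check at the single point $z = i$, $\gamma = I$ fixes the sign to be $+1$, and by connectedness it is $+1$ everywhere. Alternatively, one can carry out the $2\times 2$ matrix multiplication explicitly, using $\Im(\gamma z) = y/|cz+d|^{2}$ and $(\gamma z)\overline{(\gamma z)} = (az+b)(a\bar z + b)/|cz+d|^{2}$.

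Third, I would prove bijectivity $\H \to \D$. One way is to exhibit an inverse: given a positive line spanned by $\ell = \kzxz{w_{1}}{w_{2}}{w_{3}}{-w_{1}}$, normalize the representative so that $Q(\ell) = 1$ and $w_{3} < 0$; then setting $y = -1/(\sqrt{N}\,w_{3}) > 0$ and $x = -\sqrt{N}\,y\,w_{1}$ yields $z = x + iy \in \H$ with $w(z) = \ell$. A cleaner alternative is to use transitivity: $\SL_2(\R)$ acts transitively on $\H$ and on $\D$, so by equivariance it suffices to match stabilizers at one point. At $z = i$, one has $w(i) = \frac{1}{\sqrt{N}}\kzxz{0}{1}{-1}{0}$, whose stabilizer under conjugation in $\SL_2(\R)$ is $\SO(2)$, exactly the stabilizer of $i$ in $\H$.

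Finally, the induced isomorphism $Y_0(N) \cong \Gamma_0(N)\backslash\D$ is immediate from the equivariance in step two, since $\Gamma_0(N) \subset \SL_2(\Z) \subset G(\R)$. There is no genuine obstacle in this proof; the only mild pitfall is the sign ambiguity in the equivariance step, which is why I prefer the $z = i$ normalization to pin it down once and for all before invoking the transitivity argument for bijectivity.
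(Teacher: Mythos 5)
The paper states this lemma without proof (``can be easily checked and is left to the reader''), so there is nothing to compare against; the only question is whether your verification is sound. Steps one, three and four are fine: the determinant computation giving $Q(w(z))=1$ is correct; the explicit inverse $y=-1/(\sqrt N\,w_3)$, $x=-\sqrt N\,y\,w_1$ does recover $z$ from the representative normalized by $Q=1$ and $w_3<0$ (and this normalization is always available, since $w_3=0$ would force $Q=-Nw_1^2\le 0$ on the line); and the descent to $Y_0(N)$ is immediate once equivariance is known.

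The one genuine flaw is in your preferred route to equivariance. From the fact that $\gamma w(z)\gamma^{-1}$ and $w(\gamma z)$ both lie in $V_\R$ with $Q=1$ you cannot conclude that they agree up to sign: the locus $Q=1$ in a three-dimensional space of signature $(1,2)$ is a two-dimensional hyperboloid, so equality of norms says nothing about proportionality. Knowing the two vectors span the same line is precisely the equivariance you are trying to prove, so the ``equal up to sign, then fix the sign by connectedness'' argument is circular. Your fallback --- explicit matrix multiplication using $\Im(\gamma z)=y/|cz+d|^2$ --- does work and rescues the proof. A genuinely clean alternative is the rank-one factorization
$$
w(z)=-\frac{1}{\sqrt N\,y}\,\Re\!\left[\left(\begin{smallmatrix} z\\ 1\end{smallmatrix}\right)(1,\ -\bar z)\right],
$$
combined with $\gamma\left(\begin{smallmatrix} z\\ 1\end{smallmatrix}\right)=(cz+d)\left(\begin{smallmatrix} \gamma z\\ 1\end{smallmatrix}\right)$ and $(1,-\bar z)\,\gamma^{-1}=(c\bar z+d)\,(1,-\overline{\gamma z})$; since $|cz+d|^2\,\Im(\gamma z)=y$, this yields $\gamma w(z)\gamma^{-1}=w(\gamma z)$ on the nose, with no sign to chase. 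The transitivity-plus-stabilizer argument for bijectivity is fine once equivariance is in hand, though it quietly uses that $\SO(2)$ is the full stabilizer in $\SL_2(\R)$ of the \emph{line} $[w(i)]$ (no element of $\SL_2(\R)$ conjugates $\kzxz{0}{1}{-1}{0}$ to its negative, as such an element would have negative determinant) and that $\SL_2(\R)$ surjects onto the identity component of $\SO(V)(\R)$; your explicit inverse is the more self-contained option.
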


Let $X_0(N)$ be the usual compactification of $Y_0(N)$.
Let $\Mp_{2, \R}$ be the metaplectic double cover of $\SL_2(\R)$, which can be  realized as pairs $(g, \phi(g, \tau))$, where $g=\abcd \in \SL_2(\R)$, $\phi(g,
\tau)$ is a holomorphic function of $\tau \in \H$ such that $\phi(g, \tau)^2 = j(g, \tau) = c\tau +d$. Let $\Gamma'$ be the preimage of
$\Gamma=\SL_2(\Z)$ in $\Mp_{2, \R}$, then $\Gamma'$ is generated by
$$
S=\left( \kzxz {0} {-1} {1} {0}, \sqrt \tau \right)  \quad  T= \left( \kzxz {1} {1} {0} {1} , 1 \right).
$$
We denote the standard basis of $S_L=\C[L^\sharp/L]$ by $\{ e_\mu=L_\mu:\, \mu \in L^\sharp/L\}$.  Then there is a Weil representation $\rho_L$ of $\Gamma'$ on $\C[L^\sharp/L]$ given by  (
\cite{Bo})
\begin{eqnarray} \label{eq:WeilRepresentation}
&\rho_{L} (T)e_{\mu}&=e(Q(\mu))e_{\mu} ,\\
&\rho_{L}(S) e_{\mu}&= \frac{e(\frac{1}8)}{\sqrt{\vert L^{\sharp} / L \vert}}\sum \limits_{\mu^{\prime} \in L^{\sharp} / L }e(-(\mu,
\mu^{\prime}))e_{\mu^{\prime}}\nonumber .
\end{eqnarray}
This Weil representation $\rho_L$ is naturally connected to the Weil representation
$\omega$ of $\Mp_{2, \A}$ on $S(V_\A)$, see \cite{BHY} for explanation.

Following Kudla and Millson  (\cite{KMi}, \cite[Section 3]{BF1}),
 we  decompose for  $z=x+iy \in \H$,
$$
V_\R =\R w(z) \oplus w(z)^\perp, \quad w = w_z + w_{z^\perp},
$$
and define
$R(w, z) =-(w_{z^\perp}, w_{z^\perp})$, and the majorant
$$
(w, w)_z= (w_z, w_z) + R(w, z).
$$
Since $Q(w(z)) =1$, it is easy to check
\begin{align} \label{eq3.5}
R(w, z)&=\frac{1}{2}(w, w(z))^{2}-(w, w),
\\
(w, w)_{z}&=(w, w(z))^{2}- (w, w). \notag
\end{align}
For  $w =\left(
  \begin{array}{cc}
    w_{1}&w_{2}\\
     w_{3}&-w_{1}\\
  \end{array}
\right) \in V_{\R},$  we have
\begin{equation} \label{formula1}
(w, w(z))=-\frac{\sqrt{N}}{y}(w_3z\overline{z}-w_1(z+\overline{z})-w_2).
\end{equation}

Let  $\mu(z) =\frac{dx \, dy}{y^2}$,
\begin{align} \label{eq3.5}
\varphi^{0}(w, z)&=\bigg((w, w(z))^{2}- \frac{1}{2\pi}\bigg)e^{-2\pi R(w, z)}\mu(z), \notag
\\
\varphi(w, \tau, z) &= e(Q(w)\tau) \varphi^0(\sqrt v w, z) ,
\end{align}
which is a Schwartz function on $V_{\R}$ valued in $\Omega^{1,1}(\D)$ constructed by Kudla and Millson in \cite{KMi}.
Finally, let
\begin{equation} \label{eq:ThetaCoefficient}
\Theta_L(\tau, z) =   \sum_{\mu \in  L^\sharp/L}  \theta_\mu(\tau, z) e_\mu
\end{equation}
be the vector valued Kudla-Millson theta function, where
\begin{align}
\theta_\mu(\tau, z) &=\sum_{n, \mu} \sum_{ w \in L_\mu[n]} \varphi( w, \tau, z)   \label{eq:ScalarTheta}
\\
 &= \sum_{n\in \Q, Q(\mu) \equiv n \pmod 1} \omega(n, \mu, v)(z) q^n
  + \begin{cases} 0  &\ff \mu \ne 0,
  \\
  -\frac{1}{2\pi} \mu(z)   &\ff \mu = 0,
  \end{cases}  \notag
\end{align}
with ($q=q_\tau=e(\tau)$)
\begin{equation} \label{eq:Differential}
\omega(n,\mu,  v)(z) =\sum_{0\ne  w \in L_\mu[n]}\varphi^{0} (v^{\frac{1}{2}}w, z) \in  \Omega^{1, 1}(X_\Gamma).
\end{equation}
It is known that $\Theta_L(\tau, z)$ is a nonholomorphic modular form of weight $3/2$  of $(\Gamma',  \rho_L)$ valued  in $ \Omega^{1, 1}(X_{\Gamma}) \otimes \C[L^\sharp/L] $ as a function of $\tau$. It is $\Gamma_0(N)$-invariant as a function of $z$.

The following result of Funke about behavior of $\theta_\mu$ as $z$ goes to the boundary (cusp) is important to our definition of theta lifting.

\begin{proposition} \cite [ Proposition 4.1]{BF1} \label{convergence}
Fix  $\mu \in  L^{\sharp} / L$ and $\tau \in \H$. Let  $l=\sigma_l(\infty)$ be a cusp of $X_0(N)$.  As a function of $z=x+ i y \in \H =\mathbb D$,
the theta function (recall $z= x + iy $)
  $$
  \theta_{\mu}( \tau, \sigma_{l}z)=O( e^{-Cy^{2}}), ~as~y\longrightarrow \infty$$
holds uniformly in $x$ for some constant $ C> 0$.

\end{proposition}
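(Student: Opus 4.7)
\medskip
\noindent\emph{Proof sketch (plan).} The plan is to reduce the statement to the cusp at $\infty$ via $\sigma_l$, and then to bound $\theta_\mu(\tau, z)$ directly by splitting the sum over $w \in L_\mu$ according to whether the lower-left entry $w_3$ vanishes. The $w_3 \ne 0$ part will be routine, while the $w_3 = 0$ part is the delicate one, requiring Poisson summation together with a crucial moment vanishing of the Kudla--Millson Schwartz form.

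Since $\sigma_l \in \SL_2(\Z)$ and conjugation by $\sigma_l$ carries $L$ to another even lattice $L^{(l)} = \sigma_l^{-1} L \sigma_l$ of the same shape, one has $\theta^L_\mu(\tau, \sigma_l z) = \theta^{L^{(l)}}_{\mu^{(l)}}(\tau, z)$, so I may assume $\sigma_l = \id$ and study the behavior as $y \to \infty$. Using (\ref{formula1}), at $z = x + iy$ a direct computation gives, for $w = \kzxz{w_1}{w_2}{w_3}{-w_1}$,
$$
R(w,z) = \frac{N}{2y^2}\bigl(w_3(x^2+y^2) - 2 w_1 x - w_2\bigr)^2 + 2N(w_1^2 + w_2 w_3).
$$
Write $\theta_\mu = \Sigma' + \Sigma''$, where $\Sigma'$ is the sum over $w \in L_\mu$ with $w_3 \ne 0$ and $\Sigma''$ is the sum over $w_3 = 0$. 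For $\Sigma'$, the first summand of $R$ forces $R(\sqrt{v}\,w, z) \ge \tfrac{Nv}{2} w_3^2 y^2 - O(1)$ on a fundamental domain in $x$, so the Gaussian factor $e^{-2\pi R(\sqrt{v}\,w, z)}$ overwhelms the polynomial factor $(\sqrt{v}\,w, w(z))^2 - \tfrac{1}{2\pi}$; standard termwise estimates then give $\Sigma' = O(e^{-C_1 y^2})$ uniformly in $x$.

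The delicate piece is $\Sigma''$. For $w_3 = 0$ one has $Q(w) = -N w_1^2$, so $e(Q(w)\tau)$ offers no decay in $y$; moreover $w_1 \in \tfrac{r}{2N}+\Z$ and $w_2 \in \tfrac{1}{N}\Z$ both range over infinite arithmetic progressions, and the putative constant term $-\tfrac{1}{2\pi}\mu(z)$ of the excerpt (when $\mu = 0$) sits inside $\Sigma''$. I would apply Poisson summation to the inner sum over $w_2$ at fixed $w_1$. The essential point will be the vanishing of the zero-th Fourier coefficient
$$
\int_{\R} \Bigl[\frac{Nv}{y^2}(2 w_1 x + w_2)^2 - \frac{1}{2\pi}\Bigr]\, e^{-\pi N v (2 w_1 x + w_2)^2 / y^2}\, dw_2 = 0,
$$
which, after a Gaussian rescaling, reduces to $\int_{\R}(\pi \xi^2 - \tfrac12) e^{-\pi \xi^2}\, d\xi = 0$. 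Thus the $m = 0$ term drops out and each surviving dual-lattice frequency $m \ne 0$ contributes a Gaussian $e^{-\pi N m^2 y^2 / v}$; summing over $m \ne 0$, and then over $w_1$ (where $e^{-2\pi N v w_1^2}$ supplies absolute convergence), gives $\Sigma'' = O(e^{-C_2 y^2})$ uniformly in $x$.

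The main obstacle I anticipate is precisely this moment vanishing of $\varphi^0(\sqrt{v}\,w, z)$ in the $w_2$-direction: it is the defining ``cusp-form'' feature of the Kudla--Millson Schwartz function, and without it one could only prove boundedness (not rapid decay) of $\theta_\mu$ at the cusps. Combining the two estimates will yield $\theta_\mu(\tau, \sigma_l z) = O(e^{-C y^2})$ with $C = \min(C_1, C_2) > 0$, uniformly in $x$, as claimed.
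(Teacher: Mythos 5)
The paper offers no proof of this proposition; it is imported verbatim from \cite[Proposition 4.1]{BF1} (ultimately Funke \cite{Fu}). Your sketch reconstructs exactly the standard argument behind that citation: reduce to the cusp $\infty$ by conjugating the lattice, split the sum according to whether $w$ lies in $\ell_\infty^\perp=\{w_3=0\}$, and handle the non-decaying part by Poisson summation in the isotropic $w_2$-direction, where the zeroth Fourier coefficient vanishes because $\int_{\R}(\pi\xi^2-\tfrac12)e^{-\pi\xi^2}\,d\xi=0$. That moment vanishing is indeed the crux (it is what makes the Kudla--Millson kernel cuspidal in $z$), and your observation that the constant term $-\tfrac{1}{2\pi}\mu(z)$ sits inside $\Sigma''$ and is cancelled by the same computation is correct. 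Uniformity in $x$ via periodicity is also fine.

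One step is false as written and must be repaired. For $\Sigma'$ you assert that $R(\sqrt v\,w,z)\ge \tfrac{Nv}{2}w_3^2y^2-O(1)$ when $w_3\ne 0$. Completing the square in $A=w_3|z|^2-2w_1x-w_2$ gives
$$
R(w,z)=\frac{N}{2y^2}\bigl(A-2w_3y^2\bigr)^2+2N(w_1-w_3x)^2,
$$
so taking $w_3=1$, $w_1$ the lattice point nearest $x$, and $w_2\in\tfrac1N\Z$ nearest $|z|^2-2w_1x-2y^2\approx-y^2$, one gets $R(w,z)=O(y^{-2})$ even though $w_3\ne0$: the factor $e^{-2\pi R(\sqrt v w,z)}$ overwhelms nothing for such terms. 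Their decay comes instead from $|e(Q(w)\tau)|=e^{-2\pi vQ(w)}$, since $Q(w)=-N(w_1^2+w_2w_3)\approx Ny^2$ there. The correct termwise bound is on the majorant: $|e(Q(w)\tau)|\,e^{-2\pi R(\sqrt v w,z)}=e^{-\pi v(w,w)_z}$, and
$$
(w,w)_z=\frac{N}{y^2}\bigl(A-w_3y^2\bigr)^2+2N(w_1-w_3x)^2+Nw_3^2y^2\;\ge\;Nw_3^2y^2 .
$$
With this substitution (plus the polynomial count of lattice vectors of bounded majorant) your estimate $\Sigma'=O(e^{-C_1y^2})$ goes through, and the rest of the sketch is sound.
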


For a   (non-holomorphic) modular function  $f(z)$  for $\Gamma_0(N)$ (viewed as subgroup of the Spin group) with moderate growth, the theta
lifting
\begin{equation} \label{eq:ThetaLifting}
I(\tau, f) =\int_{\Gamma_0(N) \backslash \mathbb D} f(z) \Theta_L(\tau, z)  =\sum_{\mu \in L^\sharp/L} I_\mu(\tau, f) e_\mu
\end{equation}
is absolutely convergent by Proposition \ref{convergence} and is a (non-holomorphic) weight $3/2$  modular form of $\Gamma'$ with values in
$\C[L^\sharp/L]$.

{\bf Proof of Theorem \label{eq:ScalarTheta}\ref{mainresult1}}:
Firstly,  we compute the theta series:
\begin{eqnarray}
&&\theta_{\mu_r}( \tau, z )= \sum_{w \in L_{\mu_r}} \varphi(w, \tau, z)\nonumber \\
&=& \sum\limits_{ w_{1} \in \Z+\frac{r}{2N},  n, w_3 \in \Z }
 \bigg( \frac{v}{Ny^{2}}\big(N(w_3z\overline{z}-w_1(z+\overline{z}))-n\big)^{2}-\frac{1}{2\pi}\bigg) \nonumber \\
&& \times e(-N\overline{\tau}w_1^{2})e(-\overline{\tau}w_3n)e\bigg(\frac{iv}{2Ny^{2}} \big( N(w_3 z \overline{z}-
w_1(z+\overline{z}))-n\big)^{2}\bigg)\mu(z).\nonumber
\end{eqnarray}
Let
$$f(X)=  \big( \frac{vX^{2}}{Ny^{2}}-\frac{1}{2\pi}\big)
e(-\overline{\tau}w_3X)e\big(\frac{ivX^{2}}{2Ny^{2}}\big),$$
then
\begin{eqnarray}
\widehat{f}(m)&=&\int_{-\infty}^{\infty}f(X)e(-mX) dX
= -\frac{N^{\frac{3}{2}}y^{3}}{v^{\frac{3}{2}}}(\overline{\tau}w_3+m)^{2}e\big( \frac{iNy^{2}}{2v}(\overline{\tau}w_3+m)^{2}\big).\nonumber
\end{eqnarray}
Write  $t=N(w_3 z \overline{z}- w_1(z+\overline{z}))$.  Applying
 the Poisson summation formula, we obtain
\begin{eqnarray}
\theta_{\mu_{r}}( \tau, z )&=& \sum \limits_{ w_{1} \in \Z+\frac{r}{2N},  m, w_3 \in \Z }
e(-N\overline{\tau}w_1^{2}) e(-\overline{\tau}w_3 t) \widehat{f}(m)e(-mt)\mu(z) \nonumber\\
&=&-\frac{N^{\frac{3}{2}}y^{3}}{v^{\frac{3}{2}}}\sum \limits_{ w_{1} \in \Z+\frac{r}{2N},  m, w_3 \in \Z } (\overline{\tau}w_3+m)^{2}
e\big(-N\overline{\tau}(w_1-w_3x)^{2}\big)\nonumber \\
&\times & e\big(2N(w_1-w_3m/2)mx\big)exp\big(-\frac{\pi Ny^{2}}{v}\mid m+w_3\tau\mid^{2}\big) \mu(z).\nonumber
\end{eqnarray}

As in \cite[Section 4]{Bo},  we define for $\alpha, \beta \in \Q$
\begin{equation}
\Theta_L(\tau, \alpha, \beta)= \sum_{r \in \Z/2N} \sum_{w_1 \in \frac{r}{2N}+\Z }e(-\overline{\tau}(w_1+ \beta)^{2})
e(-\alpha(2w_1+\beta))e_{\mu_r}.
\end{equation}

For $\gamma^{\prime}=\bigg(\left(
  \begin{array}{cc}
    a& b\\
     c & d\\
  \end{array}
\right), \sqrt{c\tau +d}\bigg) \in \Gamma^{\prime} ,$  it is easy to check
\begin{equation} \label{eq2.11}
\Theta_L(\tau, ndx, -ncx)=(c\overline{\tau}+d)^{-\frac{1}{2}}\rho_{L}^{-1}(\gamma^{\prime})\Theta_L(\gamma^{\prime}\tau, nx, 0).
\end{equation}

We continue the calculation:

\begin{eqnarray}
&&\Theta_L (\tau, z)\nonumber
 \\
  &=&-\frac{N^{\frac{3}{2}}y^{3}}{v^{\frac{3}{2}}}\sum \limits_{  m, w_3 \in \Z
}(\overline{\tau}w_3+m)^{2}e^{\big(-\frac{\pi Ny^{2}}{v}\mid m+w_3\tau\mid^{2}\big)}\Theta_L(\tau, mx, -w_3x)\mu(z) \nonumber
\\
&=&-\frac{N^{\frac{3}{2}}y^{3}}{v^{\frac{3}{2}}} \sum_{n=1}^{\infty}n^{2}\sum \limits_{ c, d \in \Z, (c, d)=1
}(c\overline{\tau}+d)^{2}e^{\big(-\frac{\pi Ny^{2}n^{2}}{v}\mid c\overline{\tau}+d\mid^{2}\big)}\Theta_L(\tau, ndx, -ncx)\mu(z) \nonumber
\\
&=&-\frac{N^{\frac{3}{2}}y^{3}}{v^{\frac{3}{2}}} \sum_{n=1}^{\infty}n^{2}\sum \limits_{ \gamma^{\prime} \in \Gamma_{\infty} ^{\prime}\diagdown
\Gamma^{\prime}}(c\overline{\tau}+d)^{\frac{3}{2}}e^{\big(-\frac{\pi Ny^{2}n^{2}}{v}\mid
c\overline{\tau}+d\mid^{2}\big)}\rho_{L}^{-1}(\gamma^{\prime})\Theta_L(\gamma^{\prime}\tau, nx, 0)\mu(z). \nonumber
\end{eqnarray}
Unfolding the integral,  we have for $\Re(s)>1$
\begin{eqnarray}\label{thetalift}
&&\emph{I}(\tau, E(N, z, s))=\int_{\Gamma_{\infty}\diagdown\H} \Theta_L (\tau, z)y^{s}\nonumber\\
&=&-v^{-\frac{3}{2}}N^{\frac{3}{2}}\sum_{n=1}^{\infty}n^{2}\sum \limits_{ \gamma^{\prime} \in \Gamma_{\infty} ^{\prime}\diagdown
\Gamma^{\prime}}(c\overline{\tau}+d)^{3/2}\int_{0}^{\infty}e^{\big(-\frac{\pi Ny^{2}n^{2}}{v}\mid c\overline{\tau}+d\mid^{2}\big)}y^{s+1}dy\nonumber
\\
&&\times \rho_{L}^{-1}(\gamma^{\prime})\int_{0}^{1}\Theta_L(\gamma^{\prime}\tau, nx, 0)dx.\nonumber
\end{eqnarray}
It is easy to check that
$$
\int_{0}^{1}\Theta_L(\gamma^{\prime}\tau, nx, 0)dx = e_{\mu_0}.
$$
So

\begin{eqnarray}
&&\int_{\Gamma_{\infty}\diagdown\H} \Theta_L (\tau, z)y^{s}\nonumber
 \\
&=&-\frac{1}{2}v^{-\frac{3}{2}}N^{\frac{3}{2}}\sum_{n=1}^{\infty}n^{2}\sum \limits_{ \gamma^{\prime} \in \Gamma_{\infty} ^{\prime}\diagdown \Gamma^{\prime}}
\frac{v^{\frac{s+2}{2}}(c\overline{\tau}+d)^{3/2}\Gamma\big(\frac{s}{2}+1\big)}{\pi^{\frac{s+2}{2}}\mid
c\tau+d\mid^{s+2}N^{\frac{s+2}{2}}n^{s+2}}\rho_{L}^{-1}(\gamma^{\prime})e_{\mu_{0}} \nonumber\\
&=&-\frac{1}{2}N^{\frac{1-s}{2}} \zeta(s)\Gamma\big(\frac{s}{2}+1\big)\sum \limits_{ \gamma ^{\prime} \in \Gamma_{\infty} ^{\prime}\diagdown
  \Gamma^{\prime}} \frac{v^{\frac{s-1}{2}}(c\overline{\tau}+d)^{3/2}}{\pi^{\frac{s}{2}+1}\mid
  c\tau+d\mid^{s+2}}\rho_{L}^{-1}(\gamma^{\prime})e_{\mu_{0}} \nonumber\\
 &=&-N^{\frac{1-s}{2}} \frac{s}{4\pi}\zeta^{\ast}(s)\sum \limits_{ \gamma ^{\prime} \in \Gamma_{\infty} ^{\prime}\diagdown \Gamma^{\prime}}
 \big(v^{\frac{s-1}{2}}e_{\mu_{0}} \big)\mid_{3/2,L} \gamma ^{\prime}. \nonumber
\end{eqnarray}
In summary, we have proved
\begin{equation}
\emph{I}(\tau, E(N, z, s))=-N^{\frac{1-s}{2}} \frac{s}{4\pi}\zeta^{\ast}(s)E_L(\tau, s).\nonumber
\end{equation}
 or equivalently,
 \begin{eqnarray}\label{equliftone}
 {I}(\tau, \mathcal{E}(N,z,s))=\zeta^{\ast}(s)\mathcal{E}_{L}(\tau,s).
 \end{eqnarray}
It is easy to check by definition that
$$
\theta_L(\tau, z) = \theta_L(\tau, w_N(z)).
$$
This implies that
$$
I(\tau, \mathcal E(N, w_N(z), s)) =I(\tau, \mathcal E(N, z, s)).
$$
This proves the theorem.

Taking residue  of both sides of the equation (\ref{equliftone}) at $ s=1$, we have the following result.
\begin{corollary}\label{cor:Value}
 \begin{equation}{I}(\tau, 1)=\frac{2}{\varphi(N)}\mathcal{E}_{L}(\tau,1).
 \end{equation}
\end{corollary}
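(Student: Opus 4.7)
The plan is to take the residue at $s=1$ of both sides of the identity $I(\tau, \mathcal{E}(N,z,s))=\zeta^{\ast}(s)\mathcal{E}_{L}(\tau,s)$ established in Theorem \ref{mainresult1}. The residue of $\mathcal{E}(N,z,s)$ at $s=1$ is a \emph{constant} function of $z$, and so by linearity of the theta lifting $I(\tau,\cdot)$ this constant can be pulled outside the integral. Consequently, once both residues are computed, one obtains a linear identity relating $I(\tau,1)$ to $\mathcal{E}_L(\tau,1)$, from which the factor $2/\varphi(N)$ will emerge.

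For the right-hand side, $\zeta^{\ast}(s)=\pi^{-s/2}\Gamma(s/2)\zeta(s)$ has a simple pole at $s=1$ with residue $\pi^{-1/2}\Gamma(1/2)=1$, while $\mathcal{E}_L(\tau,s)$ is holomorphic at $s=1$ (this is why $\mathcal{E}_L(\tau,1)$ appears as a finite special value throughout Theorem \ref{maintheo}). Hence $\Res_{s=1}\bigl[\zeta^{\ast}(s)\mathcal{E}_{L}(\tau,s)\bigr]=\mathcal{E}_L(\tau,1)$.

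For the left-hand side, I would invoke the classical fact that $E(N,z,s)$ has a simple pole at $s=1$ with residue equal to $1/\vol(\Gamma_0(N)\backslash\H)=3/\bigl(\pi N\prod_{p|N}(1+p^{-1})\bigr)$. Evaluating the normalization factor $N^{2s}\pi^{-s}\Gamma(s)\zeta^{(N)}(2s)$ at $s=1$ and using
\begin{equation*}
\zeta^{(N)}(2)=\frac{\pi^2}{6}\prod_{p|N}(1-p^{-2})=\frac{\pi^2}{6}\cdot\frac{\varphi(N)}{N}\prod_{p|N}(1+p^{-1}),
\end{equation*}
the $\prod_{p|N}(1+p^{-1})$ factors cancel and a short simplification gives $\Res_{s=1}\mathcal{E}(N,z,s)=\varphi(N)/2$. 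Thus the residue of the left-hand side is $\tfrac{\varphi(N)}{2}\,I(\tau,1)$, and equating the two residues yields $I(\tau,1)=\tfrac{2}{\varphi(N)}\,\mathcal{E}_L(\tau,1)$, as desired. The only step that requires care beyond routine bookkeeping is the holomorphy of $\mathcal{E}_L(\tau,s)$ at $s=1$, which follows from the standard analytic continuation of the vector-valued Eisenstein series $E_L(\tau,s)$ together with the fact that the normalization factor $-\tfrac{s}{4}\pi^{-s-1}\Gamma(s)\zeta^{(N)}(2s)N^{1/2+3s/2}$ is finite and nonzero there.
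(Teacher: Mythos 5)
Your proof is correct and is essentially the paper's own argument: the authors likewise obtain the corollary by taking residues at $s=1$ of both sides of $I(\tau,\mathcal{E}(N,z,s))=\zeta^{\ast}(s)\mathcal{E}_{L}(\tau,s)$, computing $\Res_{s=1}\mathcal{E}(N,z,s)=\varphi(N)/2$ from the classical residue $3/\bigl(\pi[\SL_2(\Z):\Gamma_0(N)]\bigr)$ of $E(N,z,s)$ and $\Res_{s=1}\zeta^{\ast}(s)=1$. Your explicit remark on the holomorphy of $\mathcal{E}_L(\tau,s)$ at $s=1$ is a point the paper leaves implicit, but the route is the same.
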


\section{Kronecker limit formula for the group $\Gamma_{0}(N)$ }\label{Firstlimit} \label{sect:Kronecker}

We need some preparation before proving Theorem \ref{firstlimitformula}---the Kronecker Limit formula for $\Gamma_0(N)$. These auxiliary results will also be used in Section \ref{sect:ModularCurve} and should be of independent interest.

Let
\begin{equation}
C_{N}(n)= \sum_{a=1, (a, N)=1}^{N}e(\frac{an}{N})
\end{equation}
be the Ramanujan sum.  It has the following properties according to Kluver (\cite[p.411]{Kl}).

\begin{lemma}  (Kluver)\label{lem:RamanujanSum} Let $t=(N, n)$ be the greatest common divisor of $N$ and $n$. Then one has
\begin{align*}
C_N(n) &= \frac{\varphi(N)}{\varphi(\frac{N}t)} C_{\frac{N}t}(1)
\\
C_N(n) &=\sum_{r|t} \mu(\frac{N}r) r.
\end{align*}
Here $\varphi$ is the classical Euler $\varphi$-function, and  $\mu(t)$ is the well-known M\"obius function.
In particular, one has $C_N(1) =\mu(N)$.
\end{lemma}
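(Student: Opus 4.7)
The plan is to derive both identities from a Möbius-inversion argument applied to the full additive character sum. I would start from the orthogonality relation
\begin{equation*}
\sum_{a=1}^{N} e\!\left(\tfrac{an}{N}\right) = N \cdot \mathbf{1}_{N\mid n},
\end{equation*}
and decompose the left-hand side according to $d = \gcd(a, N)$: writing $a = db$ with $(b, N/d) = 1$ and substituting $m = N/d$ rewrites it as $\sum_{m \mid N} C_m(n)$. Möbius inversion over the divisor lattice of $N$ then yields $C_N(n) = \sum_{m \mid N} \mu(N/m)\, m\, \mathbf{1}_{m \mid n}$, and combining the two divisibility constraints into $m \mid \gcd(N, n) = t$ gives the second identity. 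Taking $n=1$ forces $t = 1$, so only $m = 1$ survives and one recovers $C_N(1) = \mu(N)$.

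For the first identity, I would set $M = N/t$ and write $n = t n'$, where $(n', M) = 1$ by the definition of $t$. Since $e(an/N) = e(an'/M)$ depends only on $a \bmod M$, the sum $C_N(n)$ factors through the reduction map $(\Z/N)^\times \to (\Z/M)^\times$. The key input is that this group homomorphism is surjective with every fibre of cardinality $\varphi(N)/\varphi(M)$, which I would verify by a quick prime-power CRT decomposition. Pulling the fibre count out of the sum then gives $C_N(n) = \frac{\varphi(N)}{\varphi(M)}\, C_M(n')$, and since multiplication by the unit $n'$ permutes $(\Z/M)^\times$ we have $C_M(n') = C_M(1)$, yielding the first identity.

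The main (but routine) obstacle is the fibre-counting for the reduction map when $t$ and $M = N/t$ share common primes; the CRT decomposition reduces this to the elementary surjectivity of $(\Z/p^e)^\times \to (\Z/p^f)^\times$ for $1 \le f \le e$. Note finally that the two formulas are logically equivalent: combining the first with the $n=1$ specialization of the second gives $C_N(n) = \frac{\varphi(N)}{\varphi(M)}\mu(M)$, which an elementary manipulation reconciles with $\sum_{r\mid t}\mu(N/r)\, r$, so in principle one could establish either identity carefully and deduce the other.
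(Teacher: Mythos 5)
Your proof is correct. For the first identity you follow essentially the same route the paper does: write $n=tn'$ and $N=tM$ with $(n',M)=1$, observe $e(an/N)=e(an'/M)$, push the sum down through the surjective reduction $(\Z/N)^\times\to(\Z/M)^\times$ with fibres of size $\varphi(N)/\varphi(M)$, and use that multiplication by the unit $n'$ permutes $(\Z/M)^\times$ to replace $C_M(n')$ by $C_M(1)$. The only real difference is the second identity: the paper simply cites Kluyver for it, whereas you derive it from the orthogonality relation $\sum_{a=1}^N e(an/N)=N\,\mathbf{1}_{N\mid n}$ by grouping terms according to $\gcd(a,N)$ to get $\sum_{m\mid N}C_m(n)=N\,\mathbf{1}_{N\mid n}$ and then applying M\"obius inversion on the divisor lattice; this is the standard argument and is sound, and it makes the lemma self-contained (the specialization $C_N(1)=\mu(N)$ then drops out immediately). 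Your closing remark that the two formulas are interderivable via $C_N(n)=\frac{\varphi(N)}{\varphi(N/t)}\mu(N/t)=\sum_{r\mid t}\mu(N/r)\,r$ is also correct and is a reasonable sanity check.
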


\begin{lemma} \label{lemAt} For a positive integer $N$ and a divisor $t$ of $N$, let
$$
a_N(t)=\sum_{r|t} \mu(\frac{t}r) \mu(\frac{N}r) \frac{\varphi(N)}{\varphi(\frac{N}r)}
$$
be as in the introduction. Then the following are true.
\begin{enumerate}

\item If $Q\| N$, i.e, $Q|N$ and $(Q, N/Q)=1$, write $t=t_1 t_2$. Then  $a_N(t) =a_Q(t_1) a_{N/Q}(t_2)$.

\item One has
\begin{align*}
\sum_{t|N} a_N(t) &= \varphi(N),
\\
\sum_{t|N} t a_N(t) &= N \varphi(N) \prod_{p|N} (1+p^{-1}),
\\
\sum_{t|N} t^{-1} a_N(t) &= 0 \quad \hbox{when } N >1.
\end{align*}
\end{enumerate}
\end{lemma}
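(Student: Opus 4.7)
The plan is to reduce both parts to a computation at prime powers, using the multiplicativity of $\mu$ and $\varphi$.

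For part (1), I would directly verify the Fubini-style factorization. Given $N = QM$ with $(Q, M) = 1$ and $t = t_1 t_2$ with $t_1 \mid Q$ and $t_2 \mid M$, every divisor $r$ of $t$ factors uniquely as $r = r_1 r_2$ with $r_i \mid t_i$, and then $(r_1, r_2) = 1$, $(t/r) = (t_1/r_1)(t_2/r_2)$, $(N/r) = (Q/r_1)(M/r_2)$, all coprime factorizations. Since $\mu$ is multiplicative on coprime arguments and $\varphi(ab) = \varphi(a)\varphi(b)$ when $(a,b) = 1$, each summand in the definition of $a_N(t)$ factors as the product of the corresponding summands for $a_Q(t_1)$ and $a_M(t_2)$, giving $a_N(t) = a_Q(t_1) a_M(t_2)$.

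For part (2), the multiplicativity from part (1) immediately implies that each of the three arithmetic functions
$$
F(N) = \sum_{t\mid N} a_N(t), \qquad G(N) = \sum_{t\mid N} t\, a_N(t), \qquad H(N) = \sum_{t\mid N} t^{-1} a_N(t)
$$
is multiplicative, because divisors of $QM$ factor uniquely as coprime products and both $t$ and $t^{-1}$ are multiplicative on such factorizations. Thus it suffices to verify each identity at a single prime power $N = p^k$.

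The key local computation uses the fact that $\mu(p^m) = 0$ for $m \ge 2$, which forces the sum defining $a_{p^k}(p^j)$ (where $r$ runs through $p^i$, $0 \le i \le j$) to have nonzero terms only when $k - i \in \{0, 1\}$ and $j - i \in \{0, 1\}$, i.e.\ $i \in \{k-1, k\} \cap \{j-1, j\}$. A short direct evaluation then yields
$$
a_{p^k}(p^j) = 0 \ \text{for}\ 0 \le j \le k-2, \quad a_{p^k}(p^{k-1}) = -p^{k-1}, \quad a_{p^k}(p^k) = p^k,
$$
together with $a_1(1) = 1$. Summing over $t = p^j$ gives $F(p^k) = p^k - p^{k-1} = \varphi(p^k)$; weighting by $t$ gives $G(p^k) = p^{2k} - p^{2k-2} = p^k \varphi(p^k)(1 + p^{-1})$; weighting by $t^{-1}$ gives $H(p^k) = 1 - 1 = 0$ for $k \ge 1$. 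Taking products over primes dividing $N$ produces the three claimed identities, with the third requiring $N > 1$ so that at least one prime factor contributes the vanishing local factor.

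The only real work is bookkeeping in the local computation; there is no conceptual obstacle. The approach follows a standard pattern (multiplicative reduction, then a short prime-power calculation exploiting the support of $\mu$), so I expect the write-up to be essentially mechanical once the factorization in (1) is in hand.
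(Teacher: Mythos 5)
Your proof is correct, but for part (2) it takes a genuinely different route from the paper. The paper treats (1) as clear and then proves the second identity of (2) by a direct global manipulation: it interchanges the order of summation, substitutes $t\mapsto rt$ and then $N/r\mapsto r$, and uses $\sum_{t\mid r}t\mu(t)=\prod_{p\mid r}(1-p)$ together with $\mu(r)\prod_{p\mid r}(1-p)=\varphi(r)$ for squarefree $r$ to collapse the inner sum; the first and third identities are left to the reader with the same technique in mind. You instead use (1) to observe that $F$, $G$, $H$ are multiplicative and reduce everything to the single table of local values $a_{p^k}(p^j)=0$ for $j\le k-2$, $a_{p^k}(p^{k-1})=-p^{k-1}$, $a_{p^k}(p^k)=p^k$, which I have checked. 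Your version has the advantage of proving all three identities (and, implicitly, giving a closed form for $a_N(t)$ itself as a product of these local factors) from one short computation, and it makes the vanishing of $\sum t^{-1}a_N(t)$ for $N>1$ transparent; the paper's version has the advantage of being self-contained per identity and not relying on the factorization in (1). Both are standard and both are fine; there is no gap in your argument.
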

\begin{proof} (1) is clear. For (2), we check the second identity and leave the others to the reader.  We drop the subscript $N$ from now on as $N$ will be fixed.  One has
\begin{align*}
\sum_{t|N} t a(t)
 &= \sum_{t|N} t \sum_{r|t} \mu(\frac{t}r) \mu(\frac{N}r) \frac{\varphi(N)}{\varphi(\frac{N}r)}
 \\
 &=\varphi(N) \sum_{r|N} \frac{\mu(\frac{N}r)}{\varphi(\frac{N}r)} \sum_{t|\frac{N}r} rt \mu(t)     \quad (\hbox{ replacing $t$ by $rt$})
 \\
 &=N \varphi(N) \sum_{\substack{ r|N \\ \hbox{r square free}}} \frac{\mu(r)}{r \varphi(r)} \sum_{t|r} t \mu(t)     \quad (\hbox{ replacing $N/r$ by
 $r$})
 \\
 &= N \varphi(N) \sum_{\substack{ r|N \\ \hbox{r square free}}} \frac{1}r
=N \varphi(N) \prod_{p|N} (1 +p^{-1}).
\end{align*}

\end{proof}

\begin{proposition} \label{prop: DeltaN} Let $\Delta_N(z)$ be defined as in (\ref{eq:DeltaN}). Then ($q_z=e(z)$)
$$
\Delta_N(z)= q_z^{N\varphi(N)\prod_{p \mid N}(1+p^{-1})}\prod_{n\geq 1}(1-q_z^{n})^{24C_{N}(n)}.
$$
\end{proposition}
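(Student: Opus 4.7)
The plan is to substitute the classical product expansion $\Delta(z) = q_z\prod_{n\geq 1}(1-q_z^n)^{24}$ into the definition \eqref{eq:DeltaN}, regroup, and then reduce the claim to an arithmetic identity already (essentially) contained in Lemmas \ref{lem:RamanujanSum} and \ref{lemAt}.

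First, for each divisor $t\mid N$ we have $\Delta(tz)=q_z^{t}\prod_{n\geq 1}(1-q_z^{tn})^{24}$, so
\begin{equation*}
\Delta_N(z) \;=\; q_z^{\sum_{t\mid N} t\, a(t)} \;\prod_{t\mid N}\prod_{n\geq 1}(1-q_z^{tn})^{24\, a(t)}.
\end{equation*}
The exponent of $q_z$ is immediately $N\varphi(N)\prod_{p\mid N}(1+p^{-1})$ by the second identity of Lemma \ref{lemAt}(2), which handles the leading power.

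It remains to identify, for every $m\geq 1$, the exponent of $(1-q_z^m)^{24}$ on each side. On the left this exponent equals
\begin{equation*}
S_N(m)\;:=\;\sum_{\substack{t\mid N\\ t\mid m}} a_N(t)\;=\;\sum_{t\mid d} a_N(t),\qquad d:=\gcd(N,m),
\end{equation*}
while on the right it equals $C_N(m)$. So I need the identity $S_N(m)=C_N(m)$. I would prove this by plugging in the definition of $a_N(t)$, swapping the order of summation (with $s=t/r$), and using $\sum_{s\mid d/r}\mu(s)=\delta_{d,r}$ to collapse the inner sum to the single term $r=d$, which gives $S_N(m)=\mu(N/d)\,\varphi(N)/\varphi(N/d)$. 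On the other hand Lemma \ref{lem:RamanujanSum} combined with $C_{N/d}(1)=\mu(N/d)$ gives exactly the same value for $C_N(m)$.

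The only real content beyond bookkeeping is the Möbius-inversion step $S_N(m)=C_N(m)$; everything else is direct substitution and invocation of the two preparatory lemmas. There is no genuine obstacle once those two lemmas are in hand — the main care needed is simply to interchange the double sum defining $a_N(t)$ with the outer sum over $t\mid d$ correctly.
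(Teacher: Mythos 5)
Your proof is correct and is essentially the paper's argument run in the opposite logical direction: the paper posits unknown exponents $b(t)$ in $\prod_n(1-q_z^n)^{C_N(n)}=\prod_{t\mid N}\tilde\Delta(tz)^{b(t)}$, matches exponents to obtain $\sum_{t\mid r}b(t)=\mu(N/r)\varphi(N)/\varphi(N/r)$ (its equation (\ref{Deltapower})), and Möbius-inverts to conclude $b=a$, whereas you start from $a_N(t)$ and verify that same identity directly by collapsing the double sum. Both routes hinge on exactly the same two inputs --- Lemma \ref{lem:RamanujanSum} to write $C_N(m)=\mu(N/d)\varphi(N)/\varphi(N/d)$ with $d=\gcd(N,m)$, and Lemma \ref{lemAt}(2) for the leading power of $q_z$ --- so there is no gap and nothing essentially new in either direction.
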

\begin{proof}
Let
\begin{align*}
\tilde\Delta_N(z) &= \prod_{n} (1-q_z^n)^{C_N(n)},
\hbox{ and } \,  \tilde\Delta(z) = \prod_{n=1}^\infty (1-q_z^n).
\end{align*}
Suppose  that there are numbers $b(t)$ with
$$
\tilde\Delta_N(z) =\prod_{t|N} \tilde\Delta(tz)^{b(t)},
$$
which implies by Lemma \ref{lem:RamanujanSum}
\begin{align*}
\prod_{t|N} \prod_{(n, \frac{N}t) =1} (1-q_z^{tn})^{\frac{\varphi(N)}{\varphi(N/t)} \mu(N/t)}
&= \prod_{t|N} \prod_{n} (1-q_z^{tn})^{b(t)}
\\
&=\prod_{t|N} \prod_{t'|\frac{N}t}  \prod_{ (n, \frac{N}{tt'})=1} (1-q_z^{tt' n})^{b(t)}
\\
&=\prod_{r|N} \prod_{t|r} \prod_{ (n, \frac{N}r)=1} (1-q_z^{r n})^{b(t)}
\\
&=\prod_{r|N}\prod_{ (n, \frac{N}r)=1} (1-q_z^{r n})^{\sum_{t|r} b(t)}.
\end{align*}
So for every $r|N$, one has
\begin{equation}\label{Deltapower}
\sum_{t|r} b(t) = \frac{\varphi(N)}{\varphi(N/r)} \mu(N/r).
\end{equation}
By M\"obius inverse formula, one has
$$
b(t) = \sum_{r|t} \mu(\frac{t}r)\mu(\frac{N}r) \frac{\varphi(N)}{\varphi(\frac{N}r)}=a(t).
$$
So we have proved that
$$
\tilde\Delta_N(z) =\prod_{t|N} \tilde\Delta(tz)^{a(t)}.
$$
Combining this with Lemma \ref{lemAt} (2), we obtain the lemma.
\end{proof}

Recall (\cite{Miy}) that cusps of  $X_0(N)$ are given by $P_{\frac{aQ}N} =\frac{aQ}N$, where $Q|N$ and $a \in  (\Z/ (Q, N/Q)\Z)^\times$.  In particular, when $Q\| N$,  i.e., $Q|N$ and $(Q, N/Q)=1$, there is a unique cusp $P_{\frac{Q}N}$ associated to it. $Q=1$ is associated to $P_\infty=P_{\frac{1}N}$, and $Q=N$ is associated to $P_0=P_1$. Assume $Q \| N$, and let
$$
W_Q= \kzxz {\alpha} {\beta} {\gamma \frac{N}Q} {Q \delta} \kzxz {Q} {0} {0} {1}, \quad  \kzxz {\alpha} {\beta} {\gamma \frac{N}Q} {Q \delta} \in \Gamma_0(N/Q)
$$
be an Atkin-Lehner involution matrix with $w_Q(P_\infty) =P_{\frac{Q}N}$. Notice that when $N$ is a square free, $P_{\frac{Q}N}$, $Q|N$,  give all the cusps of $X_0(N)$.  The following proposition gives Fourier expansion of $\Delta_N$ at cusps associated  to $Q\|N$.

\begin{proposition} \label{prop:Atkin-Lehner} Assume $Q\| N$. For $t|N$, write $t_0=(t, Q)$ for their greatest common divisor. Then
$$
\Delta_N|W_Q(z) = C_Q \prod_{t|N} \Delta(\frac{t}{t_0} \frac{Q}{t_0} z)^{a_N(t)}
$$
where
$$
C_Q= Q^{6 \varphi(N)} \prod_{t_0|Q} t_0^{-12 \varphi(\frac{N}Q) a_Q(t_0)}.
$$
In particular, $\ord_p C_Q =0$ for $p\nmid Q$.
Moreover, $\Delta_N(z)$ does not vanish at cusps associated to $Q\| N$ (with $Q\ne 1$).
\end{proposition}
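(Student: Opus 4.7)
The plan is to apply the weight-$k$ slash operator with $k=12\varphi(N)$ and to transform each factor of the product $\Delta_N(z)=\prod_{t|N}\Delta(tz)^{a_N(t)}$ separately under $W_Q$. For each $t\mid N$, set $M_t:=\begin{pmatrix} t & 0\\ 0 & 1\end{pmatrix}W_Q=\begin{pmatrix} t\alpha Q & t\beta\\ \gamma N & Q\delta\end{pmatrix}$, so that $\Delta(tW_Q z)=\Delta(M_t z)$. Since the slash operator is unchanged by replacing a matrix by any positive scalar multiple, I may evaluate it on $M_t/t_0$ with $t_0:=(t,Q)$ and $t_1:=t/t_0$; this is an integer matrix of determinant $t_1 Q/t_0$.

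The key computation is the Hermite normal form of $M_t/t_0$. The $\gcd$ of its first column equals $(Q/t_0)\gcd(t_1\alpha t_0,\gamma N/Q)$, and the determinant relation $\alpha Q\delta-\beta\gamma N/Q=1$ forces $\gcd(\alpha t_0,\gamma N/Q)=1$. Crucially, the hypothesis $Q\|N$ combined with $t\mid N$ yields $t_1\mid N/Q$, so $\gcd(t_1,\gamma N/Q)=t_1$ and the first column has gcd $t_1Q/t_0$. Hence $M_t/t_0=\gamma_t''\begin{pmatrix} t_1Q/t_0 & B_t\\ 0 & 1\end{pmatrix}$ for some $\gamma_t''\in\SL_2(\Z)$ and $B_t\in\Z$. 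Using the $\SL_2(\Z)$-invariance of $\Delta$ together with the periodicity $\Delta(w+B_t)=\Delta(w)$, the slash formula yields the per-factor identity
$$\Delta(tW_Q z)=t_0^{-12}(\gamma Nz+Q\delta)^{12}\,\Delta\!\left(\tfrac{t_1Q}{t_0}z\right).$$

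Raising this to the power $a_N(t)$ and multiplying over $t$, the factors $(\gamma Nz+Q\delta)^{12a_N(t)}$ assemble to $(\gamma Nz+Q\delta)^{12\varphi(N)}$ via $\sum_{t|N} a_N(t)=\varphi(N)$ from Lemma \ref{lemAt}, which cancels the automorphy factor in $|_{k}W_Q$. The residual scalar $Q^{6\varphi(N)}\prod_{t|N} t_0^{-12a_N(t)}$ reduces to $C_Q$ after applying the multiplicativity $a_N(t_0t_1)=a_Q(t_0)a_{N/Q}(t_1)$ and $\sum_{t_1|N/Q} a_{N/Q}(t_1)=\varphi(N/Q)$ of Lemma \ref{lemAt}, producing the stated formula; only primes dividing $Q$ occur in the resulting product, so $\ord_p C_Q=0$ for $p\nmid Q$ is immediate. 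For the non-vanishing at the cusp $P_{Q/N}=W_Q(i\infty)$, the leading $q_z$-exponent of $\prod_{t|N}\Delta(\tfrac{t_1Q}{t_0}z)^{a_N(t)}$ at $z\to i\infty$ equals $E=Q\bigl(\sum_{t_0|Q} a_Q(t_0)/t_0\bigr)\bigl(\sum_{t_1|N/Q} t_1 a_{N/Q}(t_1)\bigr)$; when $Q>1$ the third identity of Lemma \ref{lemAt} gives $\sum_{t_0|Q} a_Q(t_0)/t_0=0$, hence $E=0$ and the limit is a nonzero constant, proving that $\Delta_N$ does not vanish at $P_{Q/N}$. The main obstacle is the Hermite normal form step and its reliance on the coprimality $(Q,N/Q)=1$ to disentangle the $t_0$ and $t_1$ parts so that the first-column gcd is identified cleanly.
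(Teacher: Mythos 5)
Your proof is correct and follows essentially the same route as the paper's: both rest on the per-factor identity $\Delta(tW_Qz)=t_0^{-12}(\gamma Nz+Q\delta)^{12}\Delta(\tfrac{t_1Q}{t_0}z)$ (the paper exhibits the relevant $\SL_2(\Z)$ matrix $\kzxz{\alpha t_0}{t_1\beta}{\gamma N/(t_1Q)}{Q_1\delta}$ explicitly, whereas you establish its existence via the column-gcd/Hermite normal form argument, which is the same decomposition up to the harmless translation $B_t$), and both then invoke the identities of Lemma \ref{lemAt} in the identical way to assemble the automorphy factors, identify $C_Q$, and show the leading $q$-exponent vanishes for $Q>1$.
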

\begin{proof} Write $k=12 \varphi(N)$, $t=t_0 t_1$, and $Q=t_0 Q_1$. Then
\begin{align*}
\Delta_N|W_Q(z) &= \frac{Q^{\frac{k}2}}{(\gamma N z +Q \delta)^k} \prod_{t|N} \Delta(\frac{\alpha Q t z+ t \beta}{\gamma N z + Q \delta})^{a_N(t)}
\\
 &=\frac{Q^{\frac{k}2}}{(\gamma N z +Q \delta)^k} \prod_{t|N} \Delta(\frac{ \alpha t_0 (t_1 Q_1z) +t_1 \beta}{\gamma \frac{N}{t_1 Q} (t_1Q_1 z) + Q_1 \delta})^{a_N(t)}
 \\
 &= A_Q \prod_{t|N} \Delta(t_1 Q_1z)^{a_N(t)},
\end{align*}
where (recall Lemma \ref{lemAt})
$$
A_Q = Q^{\frac{k}2} \prod_{t|Q} t_0^{-12a_N(t)}=C_Q.
$$
 On the other hand, the leading $q$-power exponent of $\Delta_N|W_Q$ is given by the above calculation  (recall again Lemma \ref{lemAt})
\begin{align*}
\sum_{t|Q} t_1 Q_1 a_N(t)
 &= \sum_{t_0|Q} \frac{Q}{t_0} a_Q(t_0) \sum_{t_1|\frac{N}Q} t_1 a_{\frac{N}Q}(t_1)
 \\
 &=\begin{cases}
    0 &\ff Q >1,
    \\
    N \varphi(N) \prod_{p|N} (1+p^{-1}) &\ff Q=1.
    \end{cases}
\end{align*}
This proves the result.
\end{proof}

{\bf Proof of Theorem \ref{firstlimitformula}}:
Recall the Whittaker function (\cite[Chapter 2]{WD}) for $ y> 0$ and $\alpha, \beta \in \C $:
\begin{equation}
W(y, \alpha, \beta) =\Gamma (\beta)^{-1} \int _{0}^{\infty} (1+h)^{\alpha -1} h^{\beta -1} e^{-yh} dh.
\end{equation}
Define
\begin{eqnarray}\label{modfiedwhittaker}
&& t_{n}(y, \alpha, \beta)\nonumber \\= && \begin{cases}
   i^{\beta - \alpha} (2\pi)^{\alpha + \beta} n^{\alpha + \beta -1} e^{-2 \pi ny} \Gamma (\alpha)^{-1} W(4 \pi ny, \alpha, \beta), &\ff  n > 0, \\
i^{\beta - \alpha} (2\pi)^{\alpha + \beta}\mid n\mid^{\alpha + \beta -1} e^{-2 \pi\mid n \mid y} \Gamma (\beta)^{-1} W(4 \pi\mid n\mid y, \beta,
\alpha),  &\ff  n<0,\\
 i^{\beta - \alpha} (2\pi)^{\alpha + \beta} \Gamma (\alpha)^{-1}  \Gamma (\beta)^{-1} \Gamma(\alpha+\beta-1)(4\pi y)^{1-\alpha-\beta},  &\ff  n=0.
 \end{cases} \nonumber
 \end{eqnarray}
One has by calculation
($z=x+iy
\in \H$)
\begin{align*}
E(N, z, s) &=\frac{y^s}{2\zeta^{(N)}(2s)} \sum_{\substack{(m, n)\in \Z^2 \\ (N, n) =1}}  \frac{1}{|m N z+ n|^{2s}}
\\
&=y^s +  \frac{y^s}{N^{2s}\zeta^{(N)}(2s)} \sum_{m=1}^\infty \sum_{\substack{1 \le  a<N \\ (a, N)=1}} \sum_{j \in \Z} | mz + \frac{a}N + j|^{-2s}
\\
&=y^s +  \frac{y^s}{N^{2s}\zeta^{(N)}(2s)} \sum_{n \in \Z}   \sum_{m=1}^\infty t_n(my,s, s) \sum_{ a \in (\Z/N)^\times} e(\frac{n(mN x+ a)}N).
\end{align*}
Write
$$
\mathcal E(N, z, s)= N^{2s}\pi^{-s}\Gamma(s)\zeta^{(N)}(2s)E(N, z, s)=\sum_{k \in \Z} a_k(z, s) e(kx).
$$
Then we have
$$
a_{0}(z, s)=N^{2s}\pi^{-s}\Gamma(s)\zeta^{(N)}(2s)y^{s}+\varphi(N)y^{s}\pi^{-s} \frac{(2\pi)^{2s}\Gamma(2s-1)(4\pi
y)^{1-2s}\zeta(2s-1)}{\Gamma(s)}.
$$
Simple calculation gives
\begin{equation}\label{coefficient}
a_{0}(z, s)=\varphi(N)\bigg(\frac{1}{2(s-1)}-\frac{\log y}{2}- \frac{\log 4\pi -\gamma }{2} + \frac{\pi}6 y N\prod_{p|N}(1+p^{-1}) \bigg)
+O(s-1).
\end{equation}
On the other hand,
\begin{equation}\label{laurent}
\zeta^{\ast}(2s-1)= \frac{1}{2(s-1)} -\frac{1}{2}\big(\log 4\pi -\gamma\big) +O(s-1).
 \end{equation}
 So
 \begin{equation} \label{eq:constant}
 \lim_{s\rightarrow 1}(a_0(z, s) - \varphi(N) \zeta^{\ast}(2s-1))= \varphi(N) ( -\frac{\log y}{2}+\frac{\pi}6 y N\prod_{p|N}(1+p^{-1})).
 \end{equation}

For $k>0$, one has
\begin{eqnarray}
a_{k}(z, s)&=&y^{s}\pi^{-s}\Gamma(s)\sum_{mn=k}t_{n}(my, s, s)\sum_{a=1, (a, N)=1}^{N}e(nmx+anN^{-1}) \nonumber\\
&=&y^{s}\pi ^{-s}\Gamma(s)(2\pi)^{2s}\frac{W(4\pi ky, s, s )}{\Gamma(s)e^{2 \pi k y} }\sum_{mn=k}n^{2s-1}C_N(n).\nonumber
\end{eqnarray}
As
$$
W(4 k \pi y, 1,1)=\frac{1}{4 k \pi y},
$$
 one has
\begin{equation}\label{coefficient1}
a_{k}(z, 1)=\frac{e^{-2 \pi ky}}{k}\sum_{n\mid k}nC_{N}(n).
\end{equation}
 It is easy to see from definition that  $a_{-k}(z, 1)=a_{k}(z, 1)$.
Therefore,
\begin{align*}
&\mathcal E(N, z, s) =\sum_{k=-\infty}^{\infty}a_{k}(z, s)e(kx)
\\
&=a_0(z, s) +\sum_{k>0}\frac{1}{k}\sum_{n\mid k}nC_{N}(n)q_z^{k}+\sum_{k>0}\frac{1}{k}\sum_{n\mid k}nC_{N}(n)\overline{q_z^{k}}+O(s-1)
\\
&=a_0(z, s) + \sum_{n=1}^\infty C_N(n) \sum_{m=1}^\infty \frac{1}m(q_z^{mn} +\bar{q_z}^{mn})  +O(s-1).
\end{align*}
Combining this with  (\ref{eq:constant}) and Proposition \ref{prop: DeltaN}, we obtain
\begin{align*}
&\lim _{s\rightarrow 1} \bigg(\mathcal E(N, z, s)- \varphi(N)\zeta^{\ast}(2s-1)\bigg)
\\
&=-\frac{\varphi(N)}2 \log y +\frac{ N \varphi(N) \prod_{p|N} (1+p^{-1}) \pi y}{6} -\sum_{n=1}^\infty \log|1-q_z^n|^2
\\
&=-\frac{1}{12} \log \big( y^{6 \varphi(N)} \mid\Delta_{N}(z)\mid\big),
\end{align*}
as claimed.
The second one follows from this identity immediately by applying $w_N$ on both sides.

{\bf Proof of Theorem \ref{derivatives}}:
The first identity is just restatement of Corollary \ref{cor:Value}.  For the second identity,  we have by Theorems  \ref{firstlimitformula},   \ref{mainresult1} and Corollary \ref{cor:Value}
\begin{eqnarray}
&&-\frac{1}{12}{I}(\tau, \log  \mid\Delta_N(z) y^{6 \varphi(N)}\mid) \nonumber\\
&=&\lim _{s\rightarrow 1} \bigg({I}(\tau, \mathcal{E}(N,z,s)) -{I}(\tau, \varphi(N)\zeta^{\ast}(2s-1))\bigg)\nonumber\\
&=&\lim _{s\rightarrow 1}\bigg( \zeta^{\ast}(s)\mathcal{E}_{L}(\tau, s) -2\zeta^{\ast}(2s-1)\mathcal{E}_{L}(\tau,1)\bigg).\nonumber
\end{eqnarray}
Now the second identity for $\log\| \Delta_N(z)\|$ follows from elementary calculation of the Laurent  expansion (just first two terms) of  the functions in  the above expression. We leave the detail to the reader.

\begin{proposition} \label{prop:CuspBehavoir} (1) \quad The generalized Delta function $\Delta_N(z)$  of level $N$  vanishes at the cusp $\infty$
 with vanishing order $N\varphi(N)\prod_{p \mid N}(1+p^{-1})$, and does not vanish  at other cusps.

  (2) \quad \begin{equation}
\Delta_N^0(z) =\Delta_N(z)|w_N = C_N \prod_{t|N} \Delta(tz)^{a(\frac{N}t)} \in M_k(N)
\end{equation}
 has vanishing order $\varphi(N) N \prod_{p|N} (1+p^{-1})$ at the cusp $P_0$ and does not vanish at other cusps. Here $C_N$ is the constant given in Proposition \ref{prop:Atkin-Lehner}.
\end{proposition}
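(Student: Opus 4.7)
The plan has two parts, and the substantive work is entirely in (1). The vanishing order at the cusp $\infty$ is immediate from the explicit $q$-expansion in Proposition \ref{prop: DeltaN}: the leading exponent of $q_z$ there is precisely $N\varphi(N)\prod_{p|N}(1+p^{-1})$. The harder direction is to show that $\Delta_N$ does not vanish at any other cusp $P$. For cusps of Atkin--Lehner type $P_{Q/N}$ with $Q\|N$, Proposition \ref{prop:Atkin-Lehner} already records the leading $q$-exponent of $\Delta_N|W_Q$ at $i\infty$ and shows it vanishes (so $\Delta_N$ is nonzero there) whenever $Q>1$. For general $N$ (not square-free), however, there exist cusps not of this form, and for those I would invoke the Kronecker limit formula of Theorem \ref{firstlimitformula}.

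The strategy is as follows. Choose $\sigma\in\SL_2(\Q)$ with $\sigma(\infty)=P$ and lower row $(c,d)$, $c\neq 0$, and let $h$ be the width of $P$. If $\Delta_N$ vanished at $P$ to order $m\geq 1$, then the Fourier expansion of the translated form at $i\infty$ would have leading term $q_z^{m/h}$, yielding
$$|\Delta_N(\sigma z)|\sim C\,|cz+d|^{k}\,e^{-2\pi m y/h}\quad\text{as } y\to\infty,$$
with $k=12\varphi(N)$. On the other hand, the constant Fourier coefficient of $E(N,\sigma z,s)$ at $i\infty$ contains only a scattering term $\phi_P(s)\,y^{1-s}$, and no $y^s$ contribution, because $P$ is not $\Gamma_0(N)$-equivalent to $\infty$. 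Consequently $\lim_{s\to 1}\bigl(\mathcal E(N,\sigma z,s)-\varphi(N)\zeta^*(2s-1)\bigr)$ grows at most like $O(\log y)$ as $y\to\infty$, with no term linear in $y$. Substituting the assumed asymptotics of $|\Delta_N(\sigma z)|$ into the right-hand side $-\tfrac{1}{12}\log\bigl(y^{6\varphi(N)}|\Delta_N(\sigma z)|\bigr)$ of Theorem \ref{firstlimitformula} would produce a term $\tfrac{\pi m}{6h}\,y$, and matching asymptotics forces $m=0$.

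For (2), the explicit formula $\Delta_N^0=C_N\prod_{t|N}\Delta(tz)^{a(N/t)}$ is obtained by specializing Proposition \ref{prop:Atkin-Lehner} to $Q=N$: the matrix $W_N$ there differs from $w_N$ by right multiplication by an element of $\Gamma_0(N)$, under which $\Delta_N$ is invariant, so $\Delta_N|w_N=\Delta_N|W_N$, and relabeling $t\mapsto N/t$ in the product gives the stated identity. The cusp statements in (2) then transfer immediately from (1) since $w_N$ is an involution of $X_0(N)$ permuting cusps and interchanging $P_\infty$ with $P_0$: the vanishing of $\Delta_N$ at $\infty$ becomes vanishing of $\Delta_N^0$ at $P_0$ of the same order, and the non-vanishing at every other cusp is preserved. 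The main obstacle I expect is the general-cusp argument in (1); in particular one must verify explicitly that the constant Fourier coefficient of $\mathcal E(N,\sigma z,s)$ at a cusp inequivalent to $\infty$ genuinely lacks the $y^s$-summand, a standard but slightly delicate point, and then carry out the asymptotic comparison cleanly so that only the linear-in-$y$ term is used to force $m=0$.
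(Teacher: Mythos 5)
Your proposal is correct and follows essentially the same route as the paper: Proposition \ref{prop:Atkin-Lehner} handles the vanishing order at $\infty$ and the cusps $P_{Q/N}$ with $Q\| N$, and for a general cusp the paper likewise combines Theorem \ref{firstlimitformula} with Goldstein's Fourier expansion of $E(N,\sigma z,s)$ at $P$, whose constant term has no $y^s$-summand precisely because $P$ is inequivalent to $\infty$, so that $\log|\Delta_N(\sigma z)|$ tends to a finite constant. Your contrapositive phrasing (a zero of order $m$ would force a linear-in-$y$ term $\tfrac{\pi m}{6h}y$ that cannot appear) is the same mechanism, and part (2) is obtained in both treatments by applying $w_N$.
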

\begin{proof} This proposition is clear  at cusp $P_{Q/N}$ with $Q\| N$ by Proposition \ref{prop:Atkin-Lehner}.  In  particular, it is true when $N$ is square free, which  is all what we need in  Part II.  The general case follows from the Kronecker limit formula at the cusp $P$.
 Write
$$
 N^{2s}\pi^{-s}\Gamma(s)\zeta^{(N)}(2s)=A+B(s-1)+o(s-1),
 $$
 and $\alpha =\frac{ \varphi(N)}{A}$.  According to \cite[(21)]{Go}, for a cusp $P$, there is $\sigma=\sigma_P \in \SL_2(\R)$ such that $\sigma(P_\infty) =P$, and
 \begin{eqnarray}&&\lim_{s\rightarrow 1}\bigg( E(N, \sigma z, s)- \frac{\alpha}{2(s-1)}\bigg) \nonumber\\
&&=\beta_P - \frac{\alpha}{2}\log y + y\delta_{P, P_\infty}+\sum_{m>1}(\phi_{P, m}q_z^{m}+\overline{\phi_{P, m} q_z^m}),\nonumber
\end{eqnarray}
for some constant $\beta_P$. Here $\delta_{P,P_\infty}$ is the Kronecker $\delta$-symbol.  So simple calculation gives for $P \ne P_\infty$
\begin{align*}
&\lim _{s\rightarrow 1} \bigg( \mathcal{E}(N, \sigma z, s)-\varphi(N)\zeta^{\ast}(2s-1)\bigg)
 \\
&=\gamma_P -\frac{\varphi(N)}2 \log y  + A \sum_{m>1}(\phi_{P, m}q_z^{m}+\overline{\phi_{P, m} q_z^m}),
\end{align*}
for some constant $\gamma_P$. One has thus by Theorem \ref{firstlimitformula}
$$
\log \big(y^{6\varphi(N)}\mid \Delta_{N}(\sigma(z))\mid\big)
=-12 \gamma_P + 6 \varphi(N) \log y - 12 A \sum_{m>1}(\phi_{P, m}q_z^{m}+\overline{\phi_{P, m} q_z^m}).
$$
Equivalently,
$$
\log|\Delta_N(\sigma(z))|= -12 \gamma_P - 12 A \sum_{m>1}(\phi_{P, m}q_z^{m}+\overline{\phi_{P, m} q_z^m}),
$$
which goes to $-12 \gamma_P$ when $y\rightarrow \infty$.
So $\Delta_N(z)$ does not vanish at the cusp $P=\sigma(P_\infty)$.

\end{proof}

Recall that the Eisenstein series $E(N, z, s)$ has the Fourier expansion
 $$E(N, z, s)=\sum_{n \in \Z}c_{n}(y, s)e(nx),$$
 where the constant term has the form
 $$
c_{0}(y, s)=y^{s}+\Phi(s)y^{1-s},
$$
with
\begin{equation}
\Phi(s)= \frac{\varphi(N)\pi^{\frac{1}{2}}\zeta(2s-1)\Gamma(s-\frac{1}{2})}{N^{2s}\zeta^{(N)}(2s)}.
\end{equation}
Simple calculation gives  the following lemma, which will be used  in the proof of Theorem \ref{theo:horizontal}.
\begin{lemma}\label{scatteringconst} Write
$$
\Phi(s) = \frac{C_{-1}}{s-1} + C_0 + O(s-1).
$$
Then
\begin{align*}
C_{-1} &= \hbox{Res}_{s=1} \Phi(s) =\frac{3}{\pi r},
\\
C_0 &=-\frac{6}{\pi r}\bigg( \log 4\pi-1 +12\zeta^{\prime}(-1)+ \sum_{p \mid N} \frac{p^{2}}{p^{2}-1} \log p\bigg),
\end{align*}
where $r=[\SL_{2}(\Z): \Gamma_{0}(N)]=N\Pi_{p \mid N}(1+p^{-1}).$
\end{lemma}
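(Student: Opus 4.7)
The plan is to perform a Laurent expansion of $\Phi(s)$ at $s=1$ factor by factor. First I would write $\Phi(s)=A(s)\cdot \zeta(2s-1)$, where $A(s)$ collects all the factors that are holomorphic and nonzero at $s=1$; the only pole of $\Phi(s)$ at $s=1$ comes from the simple pole of $\zeta(2s-1)$. Using the elementary expansion $\zeta(2s-1)=\tfrac{1}{2(s-1)}+\gamma+O(s-1)$, the Laurent coefficients of $\Phi(s)$ read off immediately:
\[
C_{-1}=\tfrac{1}{2}A(1),\qquad C_0=\gamma\,A(1)+\tfrac{1}{2}A'(1).
\]

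For $A(1)$, I would combine $\Gamma(\tfrac{1}{2})=\sqrt{\pi}$, $\zeta(2)=\pi^{2}/6$, and the identity $\varphi(N)\,r=N^{2}\prod_{p\mid N}(1-p^{-2})$ (a direct consequence of $1-p^{-2}=(1-p^{-1})(1+p^{-1})$) to conclude $A(1)=6/(\pi r)$, whence $C_{-1}=3/(\pi r)$ as claimed.

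For $A'(1)$, taking the logarithmic derivative at $s=1$ and using $\psi(\tfrac{1}{2})=-\gamma-2\log 2$, $\psi(1)=-\gamma$, together with
\[
\frac{(\zeta^{(N)})'(2)}{\zeta^{(N)}(2)}=\frac{\zeta'(2)}{\zeta(2)}+\sum_{p\mid N}\frac{\log p}{p^{2}-1},
\]
reduces everything to evaluating $\zeta'(2)/\zeta(2)$. The heart of the computation is to use the symmetric functional equation $\pi^{-s/2}\Gamma(s/2)\zeta(s)=\pi^{-(1-s)/2}\Gamma((1-s)/2)\zeta(1-s)$: taking logarithmic derivatives and evaluating at $s=2$ (using $\psi(-\tfrac{1}{2})=2-\gamma-2\log 2$ and $\zeta(-1)=-\tfrac{1}{12}$) yields
\[
\frac{\zeta'(2)}{\zeta(2)}=\log(2\pi)+\gamma-1+12\,\zeta'(-1).
\]

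Substituting this into $C_0=\gamma A(1)+\tfrac{1}{2}A'(1)$ the contributions proportional to $\gamma$ cancel; using $\log N+\sum_{p\mid N}\tfrac{\log p}{p^{2}-1}=\sum_{p\mid N}\tfrac{p^{2}\log p}{p^{2}-1}$ (valid since $N$ is squarefree in the arithmetic part of the paper) and $2\log 2+\log\pi=\log 4\pi$, the remaining terms collapse to the stated formula for $C_0$. The only step demanding real thought is the functional-equation calculation converting $\zeta'(2)/\zeta(2)$ into an expression involving $\zeta'(-1)$; everything else is formal Laurent-series bookkeeping.
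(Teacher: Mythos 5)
Your proof is correct and follows essentially the same route as the paper's: Laurent-expand $\Phi(s)$ factor by factor, and reduce everything to $\zeta'(2)/\zeta(2)$, which the functional equation converts into $\log(2\pi)+\gamma-1+12\zeta'(-1)$; the identity $\varphi(N)\,r=N^{2}\prod_{p\mid N}(1-p^{-2})$ then gives $A(1)=6/(\pi r)$ and hence both constants. One point worth flagging: your use of $\psi(1)=-\gamma$ shows you are (correctly) working with
$$
\Phi(s)=\frac{\varphi(N)\pi^{1/2}\Gamma(s-\tfrac{1}{2})\zeta(2s-1)}{N^{2s}\zeta^{(N)}(2s)\,\Gamma(s)},
$$
which is what the constant-term computation in the proof of Theorem \ref{firstlimitformula} actually produces (and what K\"uhn's definition of the scattering constant requires); the formula for $\Phi(s)$ displayed just before the lemma omits the $\Gamma(s)$ in the denominator, and with that version the $\gamma$'s would not cancel and $C_0$ would come out shifted by $-\frac{3\gamma}{\pi r}$. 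So your bookkeeping is right, and the discrepancy is a typo in the paper's displayed $\Phi(s)$ rather than a gap in your argument.
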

We remark that $C_0$ is the so-called scattering constant $C_{P_\infty,P_\infty}$ in  \cite{Kuhn}.

\part{Arithmetic intersection and derivative of Eisenstein series}

In this part, we will focus on the arithmetic intersection on the modular curve $X_0(N)$  and prove Theorems \ref{maintheo} and \ref{theo:modularity}.  We will assume from now on that $N$ is square free.

\section{Metrized line bundles with log singularity and arithmetic divisors with log-log singularities}\label{sect:ArithIntersectionReview}
The Gillet-Soul\'e height pairing (see \cite{SouleBook}) has been extended to arithmetic divisors with log-log singularities  or equivalently metrized bundles with log singularities (\cite{BKK}, \cite{Kuhn}, \cite{Kuhn2}). It is also extended to arithmetic divisors with $L_1^2$-Green functions (\cite{Bost}).  In this paper, we will use K\"uhn's set-up in \cite{Kuhn}, which is most convenient in our situation. Actually, for simplicity, we use a stronger condition which is easier to state and enough for our purpose.

Let $\mathcal X$ be a regular  and proper stack over $\Z$ of dimension $2$ (called arithmetic surface), and denote $X=\mathcal{X}(\C)$. For a finite subset $S=\{ S_1, \cdots, S_r\}$ of $X$, let $Y=X-S$ be its complement.  For $\epsilon >0$, let $B_\epsilon(S_j)$ be the open disc of radius $\epsilon$  centered at $S_j$, and $X_\epsilon= X- \bigcup_{j} B_\epsilon(S_j)$.  Let $t_j$ be a local parameter at $S_j$. A metrized line bundle $\widehat{\mathcal L}=(\mathcal L,  \| \, \|)$ with log singularity (with respect to $S$) is a line bundle $\mathcal L$ over $\mathcal X$ together with a metric $\|\, \|$ on $\mathcal L_\infty=\mathcal L(\C)$ satisfying the following conditions:

\begin{enumerate}
\item  $\|\, \|$ is a smooth  Hermitian metric on $\mathcal L_\infty$ when restricting to $Y$.

\item  For each $S_j \in S$ and a (non-trivial) section $s$ of $\mathcal L$, there is a real number $\alpha_j$ and a positive smooth function $\varphi$ on $B_\epsilon(S_j)$ such that
    $$
    \| s(t_j)\| = (- \log|t_j|^2)^{\alpha_j} |t_j|^{\ord_{S_j}(s)} \varphi(t_j)
    $$
hold for all $t_j \in B_\epsilon(S_j)-\{0\}$ (here $t_j=0$ corresponds to $S_j$).
\end{enumerate}
Notice that $\widehat{\mathcal L}$ with log singularity is a regular metrized line bundle if and only if all $\alpha_j=0$.
We will denote $\widehat{\Pic}_\R(\mathcal X, S)$ for the group of metrized line bundles with log singularity (with respect to $S$) with $\R$-coefficients (i.e. allowing formally $\widehat{\mathcal L}^c$ with $c \in \R$).


 A pair $\widehat{\mathcal Z}=(\mathcal Z, g)$ is called an arithmetic divisor with  log-log-singularity (along $S$) if $\mathcal Z$ is a divisor of $\mathcal X$, and $g$ is a smooth function away from $Z\cup S$ ($Z=\mathcal Z(\C)$), and satisfying the following conditions:
 \begin{align*}
 dd^c g& + \delta_Z =[\omega],
 \\
 g(t_j) = -2 \alpha_j \log \log(\frac{1}{|t_j|^2})&-2 \beta_j \log |t_j| -2 \psi_j(t_j)  \quad \hbox{near  } S_j,
 \end{align*}
 for some smooth function  $\psi_j$ and some $(1, 1)$-form $\omega$ which is smooth away from $S$.
 Let $\widehat{\mathcal L}$ be the metrized line bundle associated to $\widehat{\mathcal Z}$ with canonical section $s$ with  $-\log\| s\|^2= g$, then  $\widehat{\mathcal Z}$ is of log-log-singularity if and only if $\widehat{\mathcal L}$ has log-growth and
 \begin{equation}
 \alpha_j(g) =\alpha_j(s), \quad \beta_j(g) =\ord_{S_j}(s),  \quad \psi_j(t_j) = \log \varphi (t_j).
 \end{equation}
 We define $\widehat{\CH}_\R^1(\mathcal X, S)$ be the quotient of the $\R$-linear combination of the
 arithmetic divisors of $\mathcal X$ with log-log growth along $S$ by $\R$-linear combinations of the principal arithmetic divisors with log-log growth along $S$. One has $\widehat{\Pic}_\R(\mathcal X, S) \cong \widehat{\CH}_\R^1(\mathcal X, S)$.
 The following is a \cite[Proposition 1.4]{Kuhn}.

\begin{proposition}  \label{prop:Intersection} There is an extension of the Gillet-Soul\'e height paring to
$$
\widehat{\CH}_\R^1(\mathcal X, S) \times \widehat{\CH}_\R^1(\mathcal X, S) \rightarrow \R
$$
such that if $\mathcal Z_1$ and $\mathcal Z_2$ are   divisors  intersect properly, then
$$
\langle (\mathcal Z_1, g_1),  (\mathcal Z_2, g_2)  \rangle
=(\mathcal Z_1.\mathcal Z_2)_{fin} + \frac{1}2 g_1*g_2
$$
where the star product is defined to be
\begin{align*}
g_1*g_2&= g_1( Z_2 -\sum_{j} \ord_{S_j} (Z_2) S_j)
 + 2 \sum_{j} \ord_{S_j} Z_2 \left( \alpha_{\mathcal L_1, j} -\psi_{1, j}(0)\right)
 \\
&\quad  -\lim_{\epsilon \rightarrow 0} \left( 2 \sum_j (\ord_{S_j}Z_2) \alpha_{\mathcal L_1, j} \log(-2 \log \epsilon)-\int_{X_\epsilon} g_2 \omega_1\right).
\end{align*}

Here  $Z_i=\mathcal Z_i(\C)$, $\widehat{\mathcal L}_i$ is the associated metrized line bundle with the canonical section $s_i$. $\alpha_{\mathcal L_i, j}$ and $\psi_{i, j}$ are associated to $g_i$ and  cusp $S_j$.  Finally, $\omega_i$ is the $(1, 1)$-form associate to $g_i$ via  the following equation
$$
d d^c [g_i] + \delta_{Z_i} = [\omega_i].
$$
\end{proposition}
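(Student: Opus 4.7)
The plan is to re-derive Kühn's pairing following the strategy of \cite{Kuhn}: define the star product explicitly in terms of local data at $S$, show the limit is finite, check that the resulting pairing is symmetric and bilinear, and verify it descends to $\widehat{\CH}^1_\R(\mathcal X, S)$ while reducing to the Gillet--Soulé pairing when all $\alpha_j = 0$. Since $\widehat{\Pic}_\R(\mathcal X, S) \cong \widehat{\CH}^1_\R(\mathcal X, S)$, it is enough to work with representatives $(\mathcal Z_i, g_i)$ whose underlying cycles intersect properly; the general case follows by the usual moving lemma once the pairing on properly intersecting cycles is shown to vanish on principal divisors.

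The first step is to verify that the limit in the definition of $g_1 * g_2$ exists. Near $S_j$ one has
\[
g_2(t_j) = -2\alpha_{\mathcal L_2, j}\log\log\tfrac{1}{|t_j|^2} - 2\beta_{2,j}\log|t_j| - 2\psi_{2,j}(t_j),
\]
while the $(1,1)$-form $\omega_1 = dd^c g_1 + \delta_{Z_1}$ is smooth on $Y$ but inherits a Poincaré-type singularity $\omega_1 \sim \alpha_{\mathcal L_1,j}\, \frac{dt_j\wedge d\bar t_j}{|t_j|^2(\log|t_j|^2)^2}$ as $t_j \to 0$. A direct computation on the punctured disc of radius $\epsilon$ about each $S_j$ shows that the divergent part of $\int_{X_\epsilon} g_2\, \omega_1$ equals exactly $-2(\ord_{S_j}Z_2)\alpha_{\mathcal L_1, j}\log(-2\log\epsilon)$ plus a finite remainder. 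This cancels the added counterterm and produces a finite limit, and the finite piece is precisely what contributes to the star product.

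The second and main step is symmetry, $g_1 * g_2 \equiv g_2 * g_1 \pmod{\hbox{principal}}$. The natural route is to apply Stokes to $d(g_1\, d^c g_2 - g_2\, d^c g_1)$ on $X_\epsilon$ with additional small discs of radius $\delta$ excised around the points of $Z_1 \cap Z_2$. As $\delta \to 0$ the usual Poincaré--Lelong residues at the proper intersection points give the finite intersection term $(\mathcal Z_1.\mathcal Z_2)_{\rm fin}$ symmetrically; as $\epsilon \to 0$ the boundary integrals around the $S_j$ produce contributions of the form $2(\ord_{S_j}Z_i)(\alpha_{\mathcal L_{3-i},j} - \psi_{3-i,j}(0))$ plus divergent $\log(-2\log\epsilon)$ terms matched by the counterterm from Step 1. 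Careful bookkeeping shows both sides of the putative symmetry produce the same finite limit. This is the most delicate part, and I expect it to be the main obstacle: any sign or constant error in the residue analysis at $S_j$ propagates into the whole theory.

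The third step is to show the pairing descends: for a rational function $f$ on $\mathcal X$, the Poincaré--Lelong formula $dd^c[-\log|f|^2] + \delta_{\div f} = 0$, together with the already-established symmetry, reduces $\langle \widehat{\Div}(f), (\mathcal Z, g)\rangle$ to the arithmetic degree of $f|_{\mathcal Z}$, which is zero by the product formula; the log-log terms at $S_j$ contribute nothing extra because $\alpha(-\log|f|^2) = 0$. Bilinearity is immediate from the explicit formula, and the reduction to the classical Gillet--Soulé pairing when every $\alpha_j = 0$ is obtained by observing that in that case the counterterm vanishes and the limit $\int_{X_\epsilon} g_2\omega_1 \to \int_X g_2\omega_1$ converges absolutely, recovering the usual star product. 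In our paper we will of course simply cite \cite{Kuhn} for this proposition; the outline above explains why the formula has precisely the form stated.
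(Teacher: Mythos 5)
The paper offers no proof of this proposition at all: it is quoted verbatim from K\"uhn, with the one-line attribution ``The following is a \cite[Proposition 1.4]{Kuhn}.'' So there is no in-paper argument to compare against, and your closing remark that one would ``simply cite \cite{Kuhn}'' is exactly what the authors do. Your reconstruction of why the formula holds is the standard one (Stokes' theorem on $X_\epsilon$ with small discs excised around $Z_1\cap Z_2$, Poincar\'e--Lelong residues at the proper intersection points giving $(\mathcal Z_1.\mathcal Z_2)_{fin}$, boundary contributions at the $S_j$ producing the $\alpha_{\mathcal L_i,j}$ and $\psi_{i,j}(0)$ corrections, then symmetry and descent), and it correctly identifies symmetry as the delicate point.

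Two concrete caveats. First, in your Step 1 the sign of the divergence is wrong as written: inside the limit the counterterm is $+2\sum_j(\ord_{S_j}Z_2)\,\alpha_{\mathcal L_1,j}\log(-2\log\epsilon)$ and the integral enters with a minus sign, so for the limit to be finite one needs
$$
\int_{X_\epsilon}g_2\,\omega_1 = +2\sum_j(\ord_{S_j}Z_2)\,\alpha_{\mathcal L_1,j}\log(-2\log\epsilon)+O(1),
$$
not $-2(\ord_{S_j}Z_2)\alpha_{\mathcal L_1,j}\log(-2\log\epsilon)+O(1)$ as you assert; a direct check using $g_2\sim -2\,\ord_{S_j}(Z_2)\log|t_j|>0$ and the Poincar\'e-type growth of $\omega_1$ confirms the positive sign. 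This is a bookkeeping slip rather than a conceptual gap, but in a computation whose entire content is the matching of such constants it would have to be fixed. Second, in Step 3 the claim that the log-log terms ``contribute nothing extra because $\alpha(-\log|f|^2)=0$'' is too quick: the divisor of a rational function can certainly pass through the points of $S$, so the terms $2\sum_j\ord_{S_j}(\Div f)\bigl(\alpha_{\mathcal L_1,j}-\psi_{1,j}(0)\bigr)$ and the associated counterterm do appear when pairing against $\widehat{\Div}(f)$, and one must track their cancellation explicitly rather than dismiss them.
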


We remark  that the pairing is also symmetric.  In particular, one  has for  any $a(f) = (0, f) \in  \widehat{\CH}_\R^1(\mathcal X, S)$,
\begin{equation} \label{eq:VertInfiniteIntersection}
\langle (\mathcal Z, g), a(f)  \rangle = \frac{1}2 \int_X f \omega.
\end{equation}
We define the degree map
\begin{equation} \label{eq:Degree}
\deg:  \widehat{\CH}_\R^1(\mathcal X, S)\rightarrow \R, \quad \deg (\mathcal Z,g) = \int_{X} \omega = \langle (\mathcal Z,g), (0, 2) \rangle.
\end{equation}
  It is just $\deg Z$ when $g$ is a Green function of $Z=\mathcal Z(\C)$ without log-log singularity.

  We will denote $\widehat{\CH}_\R^1(\mathcal X)=  \widehat{\CH}_\R^1(\mathcal X, \hbox{empty})$ for the usual arithmetic Gillet-Soul\'e Chow group with real coefficients.

\section{Kudla's  Green function} \label{sect:KudlaGreenFunction}

Let $V=\{w \in M_2(\Q):\, \tr(w)=0\}$ be the quadratic space with quadratic form $Q(w) =N \det w$, and let $\mathbb D$  be the associated Hermitian symmetric domain of positive lines in $V_\R$ as in  Section \ref{sect:ThetaLifting}. Recall that  $\SL_2 =\Spin(V)$ acts on $\D$ by conjugation, and $\D$ can be  identified with
 $\H$  (Lemma \ref{lem2.1}) via
\begin{equation}
w(z) =\frac{1}{\sqrt N y} \kzxz {-x} {z \bar z} {-1} {x}, \quad z =x+ i y \in \H.
\end{equation}

Let $L$ be an even integral lattice with dual lattice $L^\sharp$ (arbitrary in this section). Let $\Gamma \subseteq \SL_2(\Z)$ be a subgroup of finite index which fixes $L$ and acts on $L^\sharp/L$ trivially. We denote $\bar{\Gamma} =\Gamma/(\Gamma \cap \{ \pm 1\})$.  For each pair $(n, \mu) \in \Q \times L^\sharp/L$ with $n>0$, $Q(\mu) \equiv n \pmod 1$,
 let $Z(n, \mu)$ be the associated Heegner divisor given by
$$
Z(n, \mu)=\Gamma \backslash \{ \R w: w \in L_\mu[n]\}.
$$
Kudla defined a nice Green function for $Z(n, \mu)$ in his seminal work \cite{Kucentral}, which we now briefly review. The purpose of this section is to understand its behavior at the cusps.

For $r>0$ and $s \in \R$, let
\begin{equation}\label{belta}
\beta_s(r) =\int_1^\infty e^{-rt } t^{-s} dt
\end{equation}
and
\begin{equation}
\xi(w, z)=\beta_1(2\pi R(w, z)),
\end{equation}
be Kudla's $\xi$-function.
For $\mu \in L^\sharp/L$, $n \in Q(\mu)+\Z$ and $v \in \R_{>0}$, define
\begin{equation}\label{eq:KudlaGreenFunctionDefinition}
\Xi(n,\mu,  v)(z) =\sum_{0\ne w \in L_\mu[n]}\xi (v^{\frac{1}{2}}w, z).
\end{equation}
Then Kudla has proved on $Y_0(N)$ (\cite{Kucentral}) that $\Xi(n, \mu, v)$ is a Green function for $Z(n, \mu)$ and satisfies the following  Green current equation:
$$
d d^c  [\Xi(n,\mu,  v)] +\delta_{Z(n, \mu)}=[\omega(n, \mu, v)]
$$
when $n>0$. When  $n \le 0$,  $\Xi(n, \mu, v)$ is still well-defined and actually  smooth on  $Y_0(N)$ while $Z(n, \mu)=0$.  So $\Xi(n, \mu, v)$ is  a Green function for $Z(n, \mu)$ for all $n$.  The purpose of this section is understand its behavior at cusps, which is quite complicated and subtle.

Let $\Iso(V)$ be the set of isotropic non-zero vectors of $V$, i.e., $0\ne \ell \in V$ with $Q(\ell)=0$. Given $\ell =\abcd \in \Iso(V)$, let $P_\ell= \frac{a}c$ be the associated cusp, which depends only on the isotropic line $\Q\ell$. Two isotropic lines give the same cusp in $\Gamma \backslash \H$ if and only if there is $\gamma \in \Gamma$ such that $\Q \gamma\cdot \ell_1 =\Q \ell_2$.

Let $\ell_\infty=\kzxz {0} {1} {0} {0} \in \Iso(V)$ and let $P_\infty=\infty$ be its associated cusp. In general,
for an  isotropic element $\ell $, there exists $\sigma_{\ell} \in \SL_{2}(\Z)$ such that $\Q \sigma_{\ell}\cdot\ell_\infty= \Q \ell.$
  Then
  $$\sigma_{\ell}^{-1}\Gamma_{\ell}\sigma_{\ell}=\{ \pm\kzxz {1}{m\kappa_{\ell}}{0}{1}, m\in \Z\},$$
   where $\Gamma_{\ell}$ is the stabilizer of $\ell$ and $\kappa_{\ell} > 0$ is the classical width of the associated  cusp $P_{\ell}$, and $q_\ell  $ is a local parameter at the cusp $P_\ell$.
   On the other hand, there is  another positive number $\beta_{\ell}> 0$, depending on $L$ and the cusp $P_\ell$,  such that $\kzxz{0}{\beta_{\ell}}{0}{0}$ is a primitive element in $\Q\ell_\infty \bigcap \sigma_{\ell}^{-1}\cdot L$.  We denote $\varepsilon_{\ell}=\frac{\kappa_{\ell}}{\beta_{\ell}}$ and call it Funke constant at cusp $P_\ell$ although Funke called it width at $P_\ell$ in \cite[Section 3]{Fu}. We will simply denote $\kappa =\kappa_\infty$.

The main purpose of this section is  to prove the following technical theorem.

\begin{theorem} \label{theo:Singularity} Let the notation be as above.  Let $0\ne \ell \in \Iso(V)$ be an isotropic vector and  $P_\ell$ be the associated cusp.

 \begin{enumerate}

 \item  When $D=-4n N$ is not a square, $\Xi(n, \mu, v)$  is smooth and of exponential decay at the cusp $P_\ell$.

\item  When  $D=-4nN >0$ is a square.  Then  $\Xi(n, \mu, v)$ has log singularity at the cusp $P_\ell$ with
$$
\Xi(n, \mu, v) = - g(n, \mu, v, P_\ell) (\log|q_\ell|^2)
    -2\psi_\ell(n, \mu, v; q_\ell).
$$
Here  $q_\ell$ is a local parameter at the cusp $P_\ell$,
$$
\alpha_\Gamma(n, \mu, P_\ell)=\sum_{w \in L_\mu[n] \mod \Gamma}  \delta_{w},
$$
where $0\le \delta_w \le 2$ is the number of  isotropic lines $\Q \ell_w \in \Iso(V)$  which is perpendicular to $w$ and belongs to the same cusp as $\ell$, and $$
g(n,\mu, v, P_\ell)= \frac{1}{8 \pi  \sqrt{-nv}}\beta_{3/2}(-4 nv \pi) \alpha_\Gamma(n, \mu, P_\ell).
$$
Finally,  $\psi_\ell(n, \mu, v; q_\ell)$ is a smooth function of $q_\ell$ (as two real  variables $q_\ell$ and $\bar{q_\ell}$) and
$$
\lim_{q_\ell \rightarrow 0} \psi_\ell(n, \mu, v; q_\ell) =0.
$$

\item  When $D=0$, one has
 \begin{eqnarray}
\Xi(0, \mu, v)&=& - g(0, \mu, v, P_\ell) (\log|q_\ell|^2)-2 \log(-\log|q_{\ell}|^2)
    \nonumber\\
& &-2 \psi_\ell(0, \mu, v;q_\ell),\nonumber
\end{eqnarray}
where $q_\ell$ is the local parameter at $P_{\ell}$ with respect to the classical width $\kappa_{\ell}$,
$g(0, \mu, v, P_\ell) = \frac{\varepsilon_{\ell}}{2 \pi \sqrt{vN}}$.
Here $\varepsilon_{\ell}$ is the Funke constant of $\ell$.
Finally,  $\psi_\ell(0, \mu, v; q_\ell)$ is a smooth function of $q_\ell$ (as two real  variables $q_\ell$ and $\bar{q_\ell}$) and
$$
\lim_{q_\ell \rightarrow 0} \psi_\ell(0, \mu, v; q_\ell) =\begin{cases}
   \log\frac{\varepsilon_\ell}{4 \pi \sqrt{Nv}} -\frac{1}{2} f(0)    &\ff \mu \in L,
   \\
    \frac{1}{2}\log\frac{\varepsilon_\ell^2}{4Nv \pi^3} +\frac{\gamma_1(0)}{2} - \sum_{n=1}^\infty \frac{\cos(\frac{2 \pi n \mu_\ell}{\beta_\ell})}n &\ff \mu \notin L.
    \end{cases}
$$
Here $f(0) = \gamma -\log(4\pi)$ is defined in Lemma \ref{lem:beta},
$$
\gamma_1(0) =\int_1^\infty e^{-y} \frac{dy}y + \int_0^1 \frac{e^{-y}-1}y dy
$$
and
$$
\sigma_\ell^{-1}\cdot L_\mu \cap \Q \ell =  \{ \kzxz  {0} {\mu_\ell + m \beta_\ell} {0} {0}:\, m \in \Z\}.
$$
\end{enumerate}
\end{theorem}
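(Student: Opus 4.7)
The plan is to reduce all analysis to the cusp at infinity by conjugation, decompose the defining sum of $\Xi(n,\mu,v)$ according to whether a vector is perpendicular to $\ell_\infty$ or not, discard the non-perpendicular part by exponential-decay estimates, and evaluate the surviving perpendicular sum asymptotically by Poisson summation (or equivalently Mellin inversion) in the remaining integer parameter. To implement this, pick $\sigma_\ell \in \SL_2(\Z)$ with $\Q\sigma_\ell\cdot\ell_\infty = \Q\ell$; since $\Xi(n,\mu,v)$ depends only on $(L,\Gamma,Q)$, conjugation by $\sigma_\ell$ lets us assume $\ell = \ell_\infty$ and study the behaviour as $y\to\infty$. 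Then $(w,\ell_\infty) = -Nw_3$, so ``perpendicular to the cusp'' means $w_3 = 0$. For $w_3 \ne 0$, formula (\ref{formula1}) gives $|(w,w(z))| \gg y$, hence $R(v^{1/2}w,z) \gg y^2$ and $\xi(v^{1/2}w,z) = O(e^{-cy^2})$; summing shows that the contribution of $\{w_3 \ne 0\}$ is smooth across the cusp and exponentially decaying, to be absorbed in $\psi_\ell$. For $w_3 = 0$ we have $Q(w) = -Nw_1^2 = n$, which forces $w_1^2 = -n/N$ to be a non-negative rational square and hence $D = -4nN$ to be a perfect square; this already proves part~(1).

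For parts (2) and (3), $w_1 = \pm\sqrt{-n/N}$ is a fixed rational number (equal to $0$ when $n = 0$), and the $w_3 = 0$ part of $L_\mu[n]$ is a finite disjoint union of arithmetic progressions $w_2 \in \mu_\ell + \beta_\ell\Z$. Each such progression contributes
\begin{equation*}
\sum_{m\in\Z}\beta_1\!\left(\tfrac{\pi vN}{y^2}(2w_1 x + \mu_\ell + m\beta_\ell)^2 + 4\pi v|n|\right).
\end{equation*}
Applying Poisson summation in $m$, commuting with the $t$-integral in $\beta_1(r) = \int_1^\infty e^{-rt}t^{-1}dt$, and using $\int_\R e^{-\alpha u^2}du = \sqrt{\pi/\alpha}$, the zeroth Fourier coefficient equals $\bigl(y/(\beta_\ell\sqrt{vN})\bigr)\beta_{3/2}(4\pi v|n|)$, while the higher Fourier coefficients yield integrals that are smooth in the local parameter $q_\ell$ and exponentially small at $q_\ell = 0$. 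Converting $y = (\kappa_\ell/4\pi)(-\log|q_\ell|^2)$ turns the linear-in-$y$ piece into exactly $-g(n,\mu,v,P_\ell)\log|q_\ell|^2$, with the combinatorial factor $\alpha_\Gamma(n,\mu,P_\ell) = \sum_{w\in L_\mu[n]\bmod \Gamma}\delta_w$ arising from the enumeration of $\Gamma_{\ell_\infty}$-orbits, the $\pm w_1$ symmetry, and the choice of perpendicular isotropic lines in the $\Gamma$-class of $\ell$; the smooth remainder becomes $-2\psi_\ell(n,\mu,v;q_\ell)$, which vanishes at the cusp. This handles part~(2).

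For part (3), $n = 0$ kills the shift $4\pi v|n|$ and we are reduced to the asymptotics of $S(x) := \sum_{m\in\Z\setminus\{0\}}\beta_1(m^2 x)$ (and its translate $\sum_{m\in\Z}\beta_1((m+\mu_\ell/\beta_\ell)^2 x)$ when $\mu_\ell \ne 0$) as $x = \pi vN\beta_\ell^2/y^2 \to 0^+$. The Mellin transform of $S$ is $2\Gamma(s)\zeta(2s)/s$, with a simple pole at $s = 1/2$ of residue $2\sqrt\pi$ and a double pole at $s = 0$; shifting the contour past both poles yields
\begin{equation*}
S(x) \;=\; 2\sqrt{\pi/x} \;+\; \log x \;+\; \gamma \;-\; 2\log(2\pi) \;+\; O(e^{-\delta/x}).
\end{equation*}
The $\sqrt{\pi/x}$ term produces the coefficient $g(0,\mu,v,P_\ell) = \varepsilon_\ell/(2\pi\sqrt{vN})$, while $\log x = -2\log y + \text{const}$ supplies, via $y = (\kappa_\ell/4\pi)(-\log|q_\ell|^2)$, the announced $-2\log(-\log|q_\ell|^2)$ log--log singularity; the surviving constants regroup into the stated $\psi_\ell(0,\mu,v;0)$ for $\mu \in L$, and for $\mu \notin L$ the same Mellin method applied with Hurwitz zeta in place of Riemann zeta gives the additional $-\sum_{n\ge 1}\cos(2\pi n\mu_\ell/\beta_\ell)/n$ via the value at $s=0$. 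The main obstacle throughout is the combinatorial bookkeeping in part (2): relating the unquotiented sum over perpendicular $w \in L_\mu[n]$ to a sum over $\Gamma$-orbits weighted by $\delta_w$ requires careful tracking of stabilisers in $\Gamma_{\ell_\infty}$, of which isotropic lines perpendicular to a representative $w$ are $\Gamma$-equivalent to $\ell$, and of the $\pm w_1$ involution. Once this identification is in place, the analytic Poisson/Mellin asymptotics follow standard templates.
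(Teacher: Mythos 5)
Your strategy is in substance the same as the paper's proof of Theorem \ref{theo:Singularity}: conjugate to the cusp at infinity, isolate the vectors perpendicular to $\ell_\infty$, kill the rest by uniform exponential decay, and extract the singular terms of the surviving sum by Poisson summation. The main difference is bookkeeping: the paper sums orbit by orbit, writing $\Xi(n,\mu,v)=\sum_{w\bmod\Gamma}\sum_{\gamma\in\bar\Gamma_w\backslash\bar\Gamma}\xi(\sqrt vw,\gamma z)$ and sorting the inner sum by whether $Q_{\tilde w}(a,c)=0$ --- which is precisely the condition that the $(2,1)$-entry of $\gamma^{-1}w\gamma$ vanishes, i.e.\ that $\gamma^{-1}\cdot w\perp\ell_\infty$ --- so your unquotiented split over $\{w_3=0\}$ versus $\{w_3\ne0\}$ is the same decomposition seen from the other side of the unfolding. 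In part (2) your ``full arithmetic progression of spacing $\beta_\ell$'' is the union of the $2|w_1|\kappa_\ell/\beta_\ell$ many $\bar\Gamma_\infty$-orbits that the paper treats one at a time in Lemma \ref{lem:infintegreen}, and the identity you still owe (number of admissible signs of $w_1$ times $\kappa_\ell\sqrt D/(\beta_\ell N)$ equals $\alpha_\Gamma(n,\mu,P_\ell)$) is exactly the orbit-counting statement $\sum_{w\bmod\Gamma}\delta_w=\#\bigl(\{w\in L_\mu[n]:w\perp\ell_\infty\}/\bar\Gamma_\infty\bigr)$, valid since $\bar\Gamma_w=1$ here; the paper carries this out in Step 2 and, for the specific lattice, in Lemma \ref{coealpha}. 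The one place you genuinely depart from the paper is the constant term in part (3): Lemma \ref{lem:beta}(3) regularizes $\sum_{n\ne0}f(n)$ against $\int_\R\widehat f$ directly and evaluates the residual double integral numerically, whereas your Mellin transform $2\Gamma(s)\zeta(2s)/s$, with its simple pole at $s=1/2$ and double pole at $s=0$, gives $2\sqrt{\pi/x}+\log x+\gamma-2\log(2\pi)+\cdots$, which under $x=\pi a^2$ reproduces $f(0)=\gamma-\log(4\pi)$. That is cleaner and independently confirms the paper's constant. (Do record the small lemma that for $w_3\ne0$ and $Q(w)=n$ fixed one has $(w,w(z))^2\ge \tfrac{N}{w_3^2}\bigl(w_3^2y^2+n/N\bigr)^2/y^2\gg y^2$ uniformly, plus a polynomial count of lattice points with $R\le M$, so that the tail sums legitimately.)

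One point needs care. For $\mu\notin L$ in part (3), your Hurwitz-zeta Mellin argument produces \emph{no} $\log x$ term, because $\zeta(0,\theta)+\zeta(0,1-\theta)=0$ degenerates the double pole at $s=0$; this agrees with Lemma \ref{lem:beta}(2), whose sum over all $m\in\Z$ is $\tfrac2a+f(a,x)$ with $f(0,x)$ finite. The $-2\log(-\log|q_\ell|^2)$ asserted by the theorem for $\mu\notin L$ arises in the paper only because the single term $\beta_1(\pi Nv\mu_\ell^2/y^2)$ of the progression is split off, and that term alone contributes $2\log y$ plus the $\gamma_1$-constants. As written, your proposal sums the full progression and therefore would not reproduce the stated log-log term in this sub-case; you must decide explicitly which vectors of $L_\mu[0]\cap\Q\ell$ enter the sum and track that one term. (For the lattice used in Part~II only $\mu=0$ meets $\Q\ell$, so nothing downstream is affected, but the general statement requires this reconciliation.)
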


The proof is long and technical and will occupy the next few subsections.

\subsection{ Two lemmas}

\begin{lemma}  \label{lem:beta} Let $a >0$ and $z =x +iy \in \C$. Then
\begin{enumerate}
\item  When $z \notin \R$, one has
$$
\sum_{n \in \Z} \beta_1(\pi a^2|z+n|^2) =\frac{1}a \sum_{n \in \Z} e(nx) \int_1^\infty e^{-\pi a^2 y^2 t -\frac{\pi n^2}{a^2t } } t^{-\frac{3}2} dt.
$$

\item When  $z =x \in \R-\Z$, one has
$$
\sum_{ n \in \Z} \beta_1(\pi a^2 (x+n)^2)=2\sum_{n \in  \Z} e(nx)\int_0^{\frac{1}a}e^{-\pi n^2 t^2} dt.
$$
Moreover, one has near $a=0$
$$
\sum_{ n \in \Z} \beta_1(\pi a^2 (x+n)^2)= \frac{2}a + f(a, x),
$$
for some smooth function $f(a, x)$ near $a=0$ with
$$
f(0, x) = \lim_{a \rightarrow 0} f(a, x)= 2\sum_{n=1}^\infty \frac{\cos(2\pi n x)}{n}.
$$

\item  One has
$$
\sum_{0 \ne n \in \Z} \beta_1(\pi a^2 n^2) = 2\int_{0}^{\frac{1}a}  \left( \sum_{n\in \Z} e^{-\pi n^2 t^2} -\int_\R e^{-\pi x^2 t^2} dx \right) dt.
$$
Moreover,
One has
  near $a =0$
$$
\sum_{0\ne  n \in \Z} \beta_1(\pi a^2 n^2)= \frac{2}a +2 \log a + f(a),
$$
for some smooth function $f(a)$ near $a=0$  with
$$
f(0)= \lim_{a \rightarrow 0} f(a)
    =\gamma -\log(4\pi),
$$
where $\gamma$ is the Euler constant.

\end{enumerate}
\end{lemma}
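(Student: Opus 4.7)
The unifying idea is to start from the integral representation $\beta_1(r)=\int_1^\infty e^{-rt}\,t^{-1}\,dt$, swap sum and integral, and apply the Jacobi–Poisson summation formula
$$
\sum_{n\in\Z}e^{-\pi\alpha(x+n)^2}=\frac{1}{\sqrt\alpha}\sum_{n\in\Z}e^{-\pi n^2/\alpha}e(nx),
$$
with $\alpha=a^2 t$. After swapping (absolute convergence is immediate from $\beta_1(r)\ll e^{-r}/r$), this rewrites $\sum_n e^{-\pi a^2 t|z+n|^2}$ as $\frac{1}{a\sqrt t}e^{-\pi a^2ty^2}\sum_n e^{-\pi n^2/(a^2t)}e(nx)$, which yields (1) directly.

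For (2) I specialise (1) at $y=0$. The $n=0$ term contributes $\frac{1}{a}\int_1^\infty t^{-3/2}dt=\frac{2}{a}$. For $n\neq0$ the substitution $t=1/(a^2u^2)$ (equivalently $s=1/(a^2 t)$ followed by $s=u^2$) converts $\int_1^\infty e^{-\pi n^2/(a^2t)}t^{-3/2}dt$ into $2a\int_0^{1/a}e^{-\pi n^2 u^2}du$, and the two pieces reassemble into the asserted single sum $2\sum_n e(nx)\int_0^{1/a}e^{-\pi n^2t^2}dt$. The asymptotics at $a\to 0^+$ then follow: the $n=0$ term is the stated $\frac{2}{a}$, and for each $n\neq0$ one has $\int_0^{1/a}e^{-\pi n^2t^2}dt=\int_0^\infty-\int_{1/a}^\infty=\tfrac{1}{2|n|}+O(e^{-\pi n^2/a^2})$. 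Smoothness in $a$ at $a=0$ is clear since the remainder $\int_{1/a}^\infty e^{-\pi n^2 t^2}dt$ and all its $a$-derivatives decay exponentially, and the pointwise limit $f(0,x)=\sum_{n\neq 0}\frac{e(nx)}{|n|}=2\sum_{n\ge 1}\frac{\cos(2\pi nx)}{n}$ drops out.

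For (3), the subtle point is that the $n=0$ term is now \emph{absent}, which ruins a naive Poisson interchange: separating the Jacobi identity term-by-term produces a divergent integral $\int_1^\infty(-1)t^{-1}dt$. I circumvent this by first substituting $t=u^2$ (so $\beta_1(\pi a^2 n^2)=2\int_1^\infty e^{-\pi a^2 n^2 u^2}u^{-1}du$) and then $v=1/(au)$, transforming the sum into $2\int_0^{1/a}\theta_0(1/v^2)\,v^{-1}dv$ with $\theta_0(y):=\sum_{n\neq 0}e^{-\pi n^2y}$. The Jacobi identity $\theta_0(1/v^2)+1=v(\theta_0(v^2)+1)$ now produces the integrand $\sum_{n\in\Z}e^{-\pi n^2 v^2}-\frac{1}{v}$, and the cancellation $\theta_0(v^2)-\frac{1}{v}\sim -1$ as $v\to 0^+$ makes the integral convergent at the origin; recognising $\int_\R e^{-\pi x^2 v^2}\,dx=1/v$ gives the asserted formula.

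The remaining asymptotic is the main obstacle and the one carrying the log. Splitting $\int_0^{1/a}=\int_0^1+\int_1^{1/a}$, the first piece is a constant in $a$, and on $[1,1/a]$ I write
$$
\sum_{n\in\Z}e^{-\pi n^2 v^2}-\frac{1}{v}=\Bigl(1-\frac{1}{v}\Bigr)+\theta_0(v^2),
$$
so that $\int_1^{1/a}(1-\frac{1}{v})dv=\frac{1}{a}-1+\log a$ produces the stated $\frac{2}{a}+2\log a$, while $\int_1^{1/a}\theta_0(v^2)dv$ converges as $a\to 0^+$ by exponential decay; this shows $f(a)$ is smooth at $0$ (the $a$-derivative of the remainder is $\theta_0(1/a^2)/a^2$, exponentially small). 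The identification $f(0)=\gamma-\log(4\pi)$ is then a classical Mellin/Kronecker-limit type identity: writing
$$
f(0)=-2+2\int_0^1\!\Bigl(\textstyle\sum_{n\in\Z}e^{-\pi n^2v^2}-\frac{1}{v}\Bigr)dv+2\int_1^\infty\!\theta_0(v^2)\,dv,
$$
the right-hand side equals $2\,\frac{d}{ds}\bigl|_{s=0}\xi(2s)\cdot(\cdots)$ and reduces to $\gamma-\log(4\pi)$ via the functional equation of $\zeta(s)$ (or, equivalently, the first Kronecker limit formula already used in Section~\ref{sect:Kronecker}). I expect this last step to be the one that needs the most care in the writeup.
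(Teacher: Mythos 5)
Your proposal is correct, and parts (1) and (2) follow essentially the same route as the paper (Poisson summation under the integral, then specialization to $y=0$ and the substitution $t=1/(a^2u^2)$). Part (3) is where you genuinely diverge, in two places. First, to get the integral formula you substitute directly in the one-variable sum and invoke the Jacobi identity $\theta_0(1/v^2)+1=v(\theta_0(v^2)+1)$, whereas the paper recovers it as the limit $z=i\epsilon\to 0$ of part (1), subtracting $\beta_1(\pi a^2\epsilon^2)$ and rewriting it by Fourier inversion as the missing $\int_\R$ term; your route avoids justifying that limit interchange and is a bit cleaner, though both hinge on the same theta transformation. Second, for the constant $f(0)$ the paper pairs $\int_0^{1/a}e^{-\pi n^2t^2}dt$ with $\int_0^{1/a}\int_n^{n+1}e^{-\pi x^2t^2}\,dx\,dt$ term by term, extracting $\tfrac12\sum_{n\ge1}(\tfrac1n-\log\tfrac{n+1}{n})=\tfrac\gamma2$ and evaluating the remaining double integral numerically (literally by Mathematica) to get $\tfrac14(\gamma+\log 4\pi)$; you instead reduce $f(0)=-2+2\int_1^\infty\theta_0(v^2)(1+v^{-1})\,dv$ to the constant term in the Laurent expansion of $\zeta^\ast(s)=\pi^{-s/2}\Gamma(s/2)\zeta(s)$ at $s=1$, which is exactly the expansion $\zeta^\ast(s)=\frac{1}{s-1}-\frac12(\log 4\pi-\gamma)+O(s-1)$ already recorded in (\ref{laurent}), and this does check out: $\int_1^\infty\theta_0(v^2)(v^{s-1}+v^{-s})\,dv=\zeta^\ast(s)-\frac{1}{s-1}+\frac1s$ gives $f(0)=-2+2\bigl(1-\tfrac12(\log 4\pi-\gamma)\bigr)=\gamma-\log(4\pi)$. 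Your method for the constant is more principled than the paper's; the one thing to tighten in a final writeup is to replace the vague ``$2\frac{d}{ds}\big|_{s=0}\xi(2s)\cdot(\cdots)$'' with the displayed identity above, and (in both your version and the paper's) to say a word about the conditional convergence of $\sum_{n\ne 0}e(nx)/|n|$ when passing to the limit in part (2).
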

\begin{proof} Let
$$f(n) = \beta_1(\pi a^2 |z+n|^2) = \beta_1( \pi a^2 y^2 +\pi a^2 (x+n)^2).
$$
Then its Fourier transformation is
 \begin{align*}
\widehat{f}(n)  &=\int_\R f(\alpha) e(-\alpha n) d\alpha
\\
 &= \frac{e(n x)}{a} \int_1^\infty  e^{-\pi a^2 y^2 t - \frac{\pi n^2}{a^2t} } t^{-\frac{3}{2}}\, dt.
 \end{align*}
 Now applying the Poisson summation formula, one obtains  the formula in (1).  When $y=0$, simple substitution gives  part of (2) with  $x \notin \Z$.
To see the behavior of the sum near $a=0$, notice that the right-hand  side is equal to
$
\frac{2}a +  f(a, x)
$
with
$$
f(a, x) = 2 \sum_{n=1}^\infty (e(nx) +e(-nx)) \int_0^{\frac{1}a} e^{-\pi n^2 t^2} dt.
$$
It is clearly smooth near $a=0$ if we define
$$
f(0, x)=2 \sum_{n=1}^\infty (e(nx) +e(-nx)) \int_0^{\infty} e^{-\pi n^2 t^2} dt =2 \sum_{n=1}^\infty \frac{\cos(2\pi n x)}{n}.
$$

To prove (3),
 take  $z =i\epsilon$ in (1), and let $\epsilon$ goes to zero, we obtain
  $$
  \sum_{0 \ne n \in \Z} \beta_1(\pi a^2 n^2)=\lim_{\epsilon \rightarrow 0}\left[ \frac{1}a \sum_{n \in \Z}  \int_1^\infty e^{-\pi a^2 \epsilon^2 t -\frac{\pi n^2}{a^2} t^{-1}} t^{-\frac{3}2} dt -\beta_1(\pi a^2 \epsilon^2)\right].
  $$
  By the Fourier inversion formula, one has
  $$
\frac{1 }a  \int_1^\infty \int_\R e^{-\pi a^2 \epsilon^2 t -\frac{\pi x^2}{a^2} t^{-1}} t^{-\frac{3}2} dx  dt= \beta_1(\pi a^2 \epsilon^2).
$$
  So
 \begin{align*}
  &\sum_{0 \ne n \in \Z} \beta_1(\pi a^2 n^2)
  \\
  &=\frac{1}a\lim_{\epsilon \rightarrow 0}\int_1^\infty e^{-\pi a^2 \epsilon^2 t}
  \left[  \sum_{  n \in \Z} e^{  -\frac{\pi n^2}{a^2t} } -\int_\R e^{  -\frac{\pi n^2}{a^2t} } dx \right] t^{-\frac{3}2} dt
  \\
  &=  \frac{2}a\int_1^\infty\left[  \sum_{ n \in \Z} e^{  -\frac{\pi n^2}{a^2t} } -\int_\R e^{  -\frac{\pi n^2}{a^2t} } dx \right] t^{-\frac{3}2} dt
  \\
  &=2 \int_{0}^{\frac{1}a} (\sum_{n \in \Z}   e^{-\pi n^2 t^2} -  \int_\R e^{-\pi x^2 t^2} dx ) dt
  \\
  &=\frac{2}a -4 \int_0^{\frac{1}a} \int_{0}^1 e^{-\pi x^2 t^2} dx  dt
    + 4\sum_{n=1}^\infty \left[\int_{0}^{\frac{1}a}e^{-\pi n^2 t^2}dt - \int_{0}^{\frac{1}a} \int_{n}^{n+1} e^{-\pi x^2 t^2} dx  dt \right]
    \\
    &=\frac{2}a -4 g_0(a) + 4 \sum_{n=1}^\infty g_n(a),
\end{align*}
with obvious meaning of $g_n(a)$.
  Here we have used the fact that the integrand in the last integral is negative. The term $\frac{2}a$ comes from  the term $n=0$  in the sum. We remark that the formula looks formally like ($z=0$)
  $$
  \sum_{n\ne 0} f(n) = \sum_{n \in \Z} \widehat{f}(n) -\int_\R \widehat f(x) dx.
  $$
  What we did is to regularize the right hand side to make it meaningful.

First,
\begin{align*}
g_0(a) &=\int_{0}^1 \int_0^{1} e^{-\pi x^2 t^2} dx  dt + \int_{1}^{\frac{1}a} \int_{0}^1 e^{-\pi x^2 t^2} dx  dt
\\
 &= \int_{0}^1 \int_0^{1} e^{-\pi x^2 t^2} dx  dt + \int_{1}^{\frac{1}a} \int_{0}^\infty  e^{-\pi x^2 t^2} dx  dt -\int_{1}^{\frac{1}a} \int_{1}^\infty e^{-\pi x^2 t^2} dx  dt
 \\
 &=-\frac{1}2 \log a + \int_{0}^1 \int_0^{1} e^{-\pi x^2 t^2} dx  dt-\int_{1}^{\frac{1}a} \int_{1}^\infty e^{-\pi x^2 t^2} dx  dt.
\end{align*}
Putting the integrals
$$
\int_{0}^1 \int_0^{1} e^{-\pi x^2 t^2} dx  dt-\int_{1}^{\infty} \int_{1}^\infty e^{-\pi x^2 t^2} dx  dt
$$
into Mathematica, it comes out the answer $\frac{1}4(\gamma+ \log 4\pi)$. So we have
$$
\lim_{a \rightarrow 0} (g_0(a) + \frac{1}2 \log a)
 = \frac{1}4(\gamma+ \log 4\pi).
$$
Next, we have
\begin{align*}
&\lim_{a \rightarrow 0} \sum_{n=1}^\infty \left[\int_{0}^{\frac{1}a}e^{-\pi n^2 t^2}dt - \int_{0}^{\frac{1}a} \int_{n}^{n+1} e^{-\pi x^2 t^2} dx  dt \right]
\\
 &= \sum_{n=1}^\infty \left[\int_{0}^{\infty }e^{-\pi n^2 t^2}dt - \int_{n}^{n+1}\int_{0}^{\infty }  e^{-\pi x^2 t^2}  dt dx \right]
 \\
  &=\frac{1}2 \sum_{n=1}^\infty (\frac{1}n-\log\frac{n+1}n)=\frac{1}2 \gamma.
\end{align*}
In  summary, we have
$$
\sum_{0\ne  n \in \Z} \beta_1(\pi a^2 n^2)= \frac{2}a +2 \log a + f(a),
$$
for some smooth function $f(a)$ near $a=0$  with
$$
f(0) =\lim_{a \rightarrow 0} f(a) =-(\gamma +\log(4\pi)) + 2 \gamma = \gamma -\log(4\pi).
$$

\end{proof}

\begin{lemma} \label{lem:infintegreen}
Assume that $D=-4Nn= (2Nm)^2>0$ is a square.  For any $w= w(m, r)= m\kzxz {1} {2r} {0} {-1} \in L_\mu[n]$
with  $(w, \ell_\infty) =0$, define
$$
\Xi_\infty(w, z) = \sum_{\gamma \in \bar\Gamma_{\infty}} \xi(w, \gamma z).
$$
Then for any $v>0$
$$
\Xi_\infty(\sqrt v w, z) =-(\log|q_\kappa|)  \frac{\sqrt N}{2 \pi \sqrt{Dv}}
   \sum_{n \in \Z} e(\frac{n}\kappa (x+r)) \int_{1}^\infty e^{-\left(\frac{tDv}N +  n^2 \frac{N}{tDv} \frac{y^2}{\kappa^2}\right)\pi } \frac{dt}{t^{\frac{3}2}},
$$
where $q_\kappa =e(z/\kappa)$ is a local parameter of $X_\Gamma$ at the cusp $P_\infty$.
Moreover, one has near the cusp $P_\infty$ ($q_\kappa =0$)
 $$
 \Xi_\infty(\sqrt v w, z)=-(\log|q_\kappa|^2)  \frac{\sqrt N}{4 \pi \sqrt{Dv}} \beta_{\frac{3}2}(\frac{D v\pi}N)+ f(\sqrt v w, z),
$$
where $f(\sqrt v w, z)$ is a smooth function of $x$ and $y$ near $P_\infty$ and
$$
\lim_{y \rightarrow \infty} f(\sqrt v w, z) =0.
$$
\end{lemma}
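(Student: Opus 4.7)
\textbf{Proof plan for Lemma \ref{lem:infintegreen}.}

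The plan is to reduce the sum to an instance of Lemma \ref{lem:beta}(1) by explicit computation of $R(w,z)$ for $w$ perpendicular to $\ell_\infty$, and then separate the $n=0$ Fourier term to extract the log-singularity. First, I would compute $R(w,z)$ for $w=w(m,r)$. Since $(w,\ell_\infty)=0$ forces $w_3=0$, formula (\ref{formula1}) gives $(w,w(z))=\frac{2m\sqrt{N}(x+r)}{y}$, while $(w,w)=2Q(w)=-2Nm^2$, so
$$R(w,z)=\tfrac{1}{2}(w,w(z))^2-(w,w)=\frac{2Nm^2|z+r|^2}{y^2}.$$
Since $\bar{\Gamma}_\infty$-translates act on $z$ by $z\mapsto z+k\kappa$ and the scaling $w\mapsto\sqrt{v}w$ multiplies $R$ by $v$, we obtain
$$\Xi_\infty(\sqrt{v}w,z)=\sum_{k\in\Z}\beta_1\!\left(\frac{4\pi Nm^2v\,|z+r+k\kappa|^2}{y^2}\right),$$
and convergence is immediate from $\beta_1(r)\ll e^{-r}/r$.

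Next, setting $u=(z+r)/\kappa$ and $a^2=4Nm^2v\kappa^2/y^2$ converts the sum to $\sum_{k}\beta_1(\pi a^2|u+k|^2)$. Applying Lemma \ref{lem:beta}(1), and using $D=4N^2m^2$ (so $\sqrt{D}=2N|m|$, $4Nm^2=D/N$, and $1/a=y\sqrt{N}/(\kappa\sqrt{Dv})$), the two exponents in the inner integral become $-\pi a^2(y/\kappa)^2 t=-\pi tDv/N$ and $-\pi n^2/(a^2t)=-\pi n^2Ny^2/(tDv\kappa^2)$. Since $-\log|q_\kappa|=2\pi y/\kappa$, the prefactor $y\sqrt{N}/(\kappa\sqrt{Dv})$ equals $-\log|q_\kappa|\cdot\sqrt{N}/(2\pi\sqrt{Dv})$, and the first displayed identity follows with $e(nx'/1)=e(n(x+r)/\kappa)$.

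For the asymptotic claim, I would isolate the $n=0$ term, which contributes
$$-\log|q_\kappa|\cdot\frac{\sqrt{N}}{2\pi\sqrt{Dv}}\int_1^\infty e^{-\pi tDv/N}\,t^{-3/2}dt=-\log|q_\kappa|^2\cdot\frac{\sqrt{N}}{4\pi\sqrt{Dv}}\beta_{3/2}(\pi Dv/N),$$
matching the leading term. Everything else is absorbed into
$$f(\sqrt{v}w,z)=\frac{y\sqrt{N}}{\kappa\sqrt{Dv}}\sum_{n\neq 0}e\!\left(\tfrac{n(x+r)}{\kappa}\right)\int_1^\infty e^{-\pi tDv/N-\pi n^2Ny^2/(tDv\kappa^2)}t^{-3/2}\,dt.$$
Smoothness in $(x,y)$ on $y>0$ is clear since the integrand is smooth and term-wise differentiation is justified by the Gaussian-in-$n$ decay. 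As $y\to\infty$, bounding the inner integral at $t=1$ shows the $n$-th term is $O\!\big(y\,e^{-\pi n^2Ny^2/(Dv\kappa^2)}\big)$, giving super-exponential decay of the tail and hence $f\to 0$.

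The main obstacle is the bookkeeping of constants (tracking $D$, $m$, $\kappa$ and the factor between $-\log|q_\kappa|$ and $-\log|q_\kappa|^2$); the analysis itself is straightforward once Lemma \ref{lem:beta}(1) is applied. No serious analytic difficulty arises because, in contrast to the $D=0$ case treated in Theorem \ref{theo:Singularity}(3), the $n=0$ Fourier mode here yields a genuine exponentially convergent integral (namely $\beta_{3/2}(\pi Dv/N)$), so no log-log correction or careful regularization is needed.
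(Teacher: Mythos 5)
Your proposal follows the same route as the paper: compute $R(\sqrt v w,\gamma z)=\frac{Dv}{2Ny^{2}}|z+r+k\kappa|^{2}$ for $\gamma\in\bar\Gamma_\infty$, recognize the resulting sum as an instance of Lemma \ref{lem:beta}(1), isolate the $n=0$ Fourier mode to produce the term $-(\log|q_\kappa|^2)\frac{\sqrt N}{4\pi\sqrt{Dv}}\beta_{3/2}(\frac{Dv\pi}{N})$, and absorb the rest into $f$. All the constant bookkeeping in your reduction ($a^2=4Nm^2v\kappa^2/y^2$, $1/a=y\sqrt N/(\kappa\sqrt{Dv})$, $-\log|q_\kappa|=2\pi y/\kappa$) checks out and agrees with the paper.

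The one step that does not survive as written is your decay estimate for $f$. You claim that ``bounding the inner integral at $t=1$'' gives the $n$-th term as $O\!\big(y\,e^{-\pi n^{2}Ny^{2}/(Dv\kappa^{2})}\big)$. This is not a valid bound: the factor $e^{-\pi n^{2}Ny^{2}/(tDv\kappa^{2})}$ is \emph{increasing} in $t$, so you cannot majorize the integral by its value at $t=1$. In fact the exponent $\frac{tDv}{N}+\frac{n^{2}Ny^{2}}{tDv\kappa^{2}}$ attains its minimum $\frac{2|n|y}{\kappa}$ at $t_{*}=\frac{|n|yN}{Dv\kappa}$, which lies inside $[1,\infty)$ once $y$ is large, so the integral is genuinely of size about $e^{-2\pi|n|y/\kappa}$ — much larger than your claimed $e^{-cy^{2}}$ bound. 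The fix is exactly the inequality the paper uses: by AM--GM,
$$
\frac{tDv}{N}+n^{2}\frac{N}{tDv}\frac{y^{2}}{\kappa^{2}}\;\ge\;\frac{2|n|y}{\kappa},
$$
so each integral is at most $2e^{-2\pi|n|y/\kappa}$ and
$$
|f(\sqrt v w,z)|\;\le\;\frac{4\sqrt N\,y}{\kappa\sqrt{Dv}}\sum_{n\ge 1}e^{-2\pi n y/\kappa},
$$
which is of exponential (not super-exponential) decay as $y\to\infty$; this still yields $\lim_{y\to\infty}f=0$ and the smoothness of $f$ by termwise differentiation. With that one correction your argument coincides with the paper's proof.
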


\begin{proof} One has
  $\bar\Gamma_\infty = \{\kzxz {1} {\kappa \Z} {0} {1}\}$
and
\begin{align*}
R(\sqrt v w, \kzxz {1}{n\kappa} {0} {1} z)&=\frac{v}{2}( w, w(z+n\kappa))^{2}-v (w, w)
\\
&=\frac{Dv}{2Ny^{2}}\mid z+n\kappa+r \mid^{2}.
\end{align*}
So one has by   Lemma \ref{lem:beta},
\begin{eqnarray}\label{greeninfinite}
 \Xi_\infty(\sqrt v w, z)
 &=& \sum_{n \in \Z} \beta_1(\frac{\pi D v}{N y^2} |z+ r + n\kappa|^2) \nonumber
 \\
 &=&  \frac{y \sqrt N}{\kappa \sqrt{Dv}}
   \sum_{n \in \Z} e(\frac{n}\kappa (x+r)) \int_{1}^\infty e^{-\left(\frac{tD}N +  n^2 \frac{N}{tD} \frac{y^2}{\kappa^2}\right)\pi} \frac{dt}{t^{\frac{3}2}}\nonumber\\
   &=&-(\log|q_\kappa|^2)  \frac{\sqrt N}{4 \pi \sqrt{Dv}} \beta_{\frac{3}2}(\frac{\pi D}N) + f(\sqrt v w, z)\nonumber
\end{eqnarray}
with
$$
f(\sqrt v w,z) =-(\log|q_\kappa|^2)  \frac{\sqrt N}{4 \pi \sqrt{Dv}}
   \sum_{0 \neq n \in \Z} e(\frac{n}\kappa (x+r)) \int_{1}^\infty e^{-\left(\frac{tDv}N +  n^2 \frac{N}{tDv} \frac{y^2}{\kappa^2}\right)\pi} \frac{dt}{t^{\frac{3}2}}.
$$
Since
$$
\frac{tDv}N +  n^2 \frac{N}{tDv} \frac{y^2}{\kappa^2} \ge \frac{2|n| y}{\kappa},
$$
one sees  for all $n\ne 0$
$$
\left| e(\frac{n}\kappa (x+r)) \int_{1}^\infty e^{-\left(\frac{tDv}N +  n^2 \frac{N}{tDv} \frac{y^2}{\kappa^2}\right)\pi} \frac{dt}{t^{\frac{3}2}} \right| \leq 2e^{-2\frac{\mid n\mid y}{\kappa}\pi},
$$
and
$$
|f(\sqrt v w, z)| \le   \frac{4 \sqrt N y}{  \kappa\sqrt{Dv} }\sum_{n=1}^\infty e^{-2\frac{n \pi y}{\kappa}}
$$
which is of exponential decay as $y \mapsto \infty$. This proves the lemma.
\end{proof}

\subsection{Proof of Theorem \ref{theo:Singularity}}

\begin{proof}  Now we are ready to start proof of Theorem \ref{theo:Singularity}. By transformation, we may assume that $\ell =\ell_\infty$ is associated to the cusp $P_\infty$. Then $q_\ell =q_\kappa$ where $\kappa$ is the width of the cusp $P_\infty$ as in the above lemma. We divide the proof into three steps: general set-up and the case $D=-4Nn$ is not a square,  $D>0$ being a square, and $D=0$.

{\bf Step 1: Set-up and the case that $D$ is not a square.}
 We write
\begin{equation}
\Xi(n, \mu, v) = \sum_{w \in  L_\mu[n] \mod \Gamma} \Xi(\sqrt v w, z), \quad   \Xi(\sqrt v w, z) =\sum_{\gamma \in \bar\Gamma_w \backslash \bar\Gamma} \xi(\sqrt v w, \gamma z).
\end{equation}
For $w=\kzxz {w_1} {w_2} {w_3} {-w_1} \in L_\mu[n]$, let ${\tilde w}= \kzxz {w_3} {-w_1} {-w_1} {-w_2}= S^{-1}\cdot w$ with $S= \kzxz {0} {-1} {1} {0}$. Then  ${\tilde w}$ is symmetric.
 Simple calculation gives for $\gamma =\abcd \in \Gamma$
\begin{equation}
R(w, \gamma z) = \frac{N}{2y^2} \left[ h_{{\tilde w}}(\gamma, z)\right]^2 -n,
\end{equation}
where
$$
h_{{\tilde w}} (\gamma, z) =(a z +b, c z+d) {\tilde w} \overline{(az+b, c z+d)}^t= Q_{{\tilde w}} (a, c) y^2 + Q_{{\tilde w}}( a x+ b, c x +d)
$$
is the  Hermitian form on $(\R z +\R)^2$, and $Q_{{\tilde w}}$ is the quadratic form on $\R^2$ associated to ${\tilde w}$. Notice that $\{(a z +b, cz+d):\, \gamma \in \Gamma\}$ is a subset of a lattice of $(\R z +\R)^2$, so for any positive number $M$
$$
\#\{ \gamma =\abcd \in \Gamma_\infty \backslash \Gamma: \, |h_{{\tilde w}}(\gamma,z)| \le M \quad \hbox{ and } \quad  0<| Q_{{\tilde w}}(a, c)| \le M\}
$$
are finite and of polynomial growth as functions of $M$. Moreover  there is a positive number $M_0$ such that if $Q_{{\tilde w}}(a, c)\ne 0$ for some $\gamma \in \Gamma$, then $|Q_{{\tilde w}}(a, c)| \ge M_0$.  In such a case,  we have
$$
R(w, \gamma z) \thicksim \frac{N}{2} Q_{\tilde w}(a, c)^2 y^2
$$
as $y \rightarrow \infty$.
Recall that
$$
\beta_1(t) =O(e^{-t}/t)
$$
as $t \rightarrow \infty$. Therefore the terms with $Q_{{\tilde w}}(a, c) \ne 0$ in the sum $\Xi(\sqrt v w, z)$ goes to zero in an exponential decay fashion. So we have proved the following lemma.

\begin{lemma} \label{lem6.4}  Let the notation be as above. If there is no $\gamma =\abcd  \in \Gamma$ such that $Q_{\tilde w}(a, c) =0$,  then
$
\Xi(\sqrt v w, z)
$
is smooth at the cusp $P_\infty$ and is of exponential decay as $y\rightarrow \infty$.
\end{lemma}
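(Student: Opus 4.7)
The plan is to upgrade the pointwise asymptotic $R(w,\gamma z) \sim \tfrac{N}{2} Q_{\tilde w}(a,c)^2 y^2$ and the estimate $\beta_1(t) = O(e^{-t}/t)$, both recorded just before the lemma, into a uniform bound over $\gamma$. Under the hypothesis, every $\gamma \in \Gamma$ satisfies $|Q_{\tilde w}(a,c)| \ge M_0 > 0$, so the term-by-term exponential decay extends to the whole series.

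First I would make the asymptotic quantitative by completing the square. Since $Q_{\tilde w}(a,c) \ne 0$,
$$h_{\tilde w}(\gamma, z) = Q_{\tilde w}(a,c)\bigl[y^2 + r(\gamma, x)\bigr],\qquad r(\gamma, x) := \tfrac{Q_{\tilde w}(ax+b,\,cx+d)}{Q_{\tilde w}(a,c)},$$
so whenever $y^2 \ge 2|r(\gamma, x)|$ one has $R(w,\gamma z) \ge \tfrac{N}{4}Q_{\tilde w}(a,c)^2 y^2 - n$, hence
$$\xi(\sqrt v w, \gamma z) \ll Q_{\tilde w}(a,c)^{-2}\, y^{-2}\, \exp\bigl(-\tfrac{\pi v N}{4}Q_{\tilde w}(a,c)^2 y^2\bigr).$$
Grouping these cosets by $k = |Q_{\tilde w}(a,c)| \ge M_0$ and invoking the polynomial growth estimate $\#\{\gamma \in \bar\Gamma_\infty\backslash\bar\Gamma : |Q_{\tilde w}(a,c)| = k\} = O(k^A)$ recalled in the paragraph preceding the lemma, I would bound the resulting tail by
$$\sum_{k \ge M_0} k^A \exp\bigl(-\tfrac{\pi v N}{4} k^2 y^2\bigr) \ll \exp\bigl(-\tfrac{\pi v N M_0^2}{8}\, y^2\bigr)$$
for $y$ sufficiently large.

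The cosets with $y^2 < 2|r(\gamma, x)|$, that is, those on which the $y^2$ and constant terms in $h_{\tilde w}$ nearly cancel, fall in the range $|h_{\tilde w}(\gamma, z)| \le |Q_{\tilde w}(a,c)|\, y^2$ and by the same counting bound are only polynomially many in $y$; a direct expansion shows that $R(w,\gamma z)$ is still bounded below by a positive multiple of $|Q_{\tilde w}(a,c)|\, y$ on this range, so each term again decays exponentially and the total contribution is again exponentially small. Smoothness at the cusp then follows by applying the same estimates termwise to $\partial_x^i\partial_y^j\xi(\sqrt v w, \gamma z)$: each differentiation only introduces a polynomial factor in $(a,b,c,d,x,y)$, which is absorbed by the Gaussian, so $\Xi(\sqrt v w, z)$ and all its partial derivatives extend smoothly through $q_\kappa = 0$ with value zero. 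The main obstacle in the argument is the uniformity needed on the residual range just described, and it is exactly the polynomial counting estimate that makes this uniformity possible.
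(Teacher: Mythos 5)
Your strategy is the paper's own: the proof in the text is precisely the paragraph you are trying to quantify (the formula $R(w,\gamma z)=\tfrac{N}{2y^2}h_{\tilde w}(\gamma,z)^2-n$, the uniform lower bound $M_0$, the polynomial counting, and $\beta_1(t)=O(e^{-t}/t)$), and your treatment of the first range $y^2\ge 2|r(\gamma,x)|$ is a correct quantification of it. Two caveats there: the count you invoke, $\#\{\gamma:|Q_{\tilde w}(a,c)|=k\}=O(k^A)$, is not what the paper states and is actually false for $n<0$ (the level sets of an indefinite integral binary form are infinite in $\Z^2$); you must keep the joint condition with $h_{\tilde w}$, which is harmless on this range since there $|h_{\tilde w}|\le\tfrac32|Q_{\tilde w}(a,c)|y^2$.

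The genuine gap is in the residual range, which you yourself identify as the main obstacle. First, the containment $|h_{\tilde w}(\gamma,z)|\le|Q_{\tilde w}(a,c)|y^2$ is false there: if $r>0$ and $r>y^2/2$ then $|h_{\tilde w}|=|Q_{\tilde w}(a,c)|(y^2+r)>|Q_{\tilde w}(a,c)|y^2$. More seriously, the asserted lower bound $R(w,\gamma z)\ge c\,|Q_{\tilde w}(a,c)|\,y$ cannot be proved by ``a direct expansion'' because it is false in general: for $n<0$ the form $Q_{\tilde w}$ is indefinite, the locus $h_{\tilde w}(\gamma,z)=0$ is the semicircle $\gamma^{-1}\mathbb D_w$, and at any point of it $R(w,\gamma z)=-(w,w)=-2n$, a constant independent of $y$; so the corresponding term of the sum does not decay at such points. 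What rescues the lemma is exactly the hypothesis $|Q_{\tilde w}(a,c)|\ge M_0$, which you never use on this range: writing $h_{\tilde w}(\gamma,z)=Q_{\tilde w}(a,c)\bigl[(x-\alpha_1)(x-\alpha_2)+y^2\bigr]$ with $\alpha_i=\gamma^{-1}\theta_i$ the images of the two real roots of $Q_{\tilde w}$, one has $|\alpha_1-\alpha_2|=|\theta_1-\theta_2|\,|w_3|/|Q_{\tilde w}(a,c)|\le C_w/M_0$ uniformly in $\gamma$, so every semicircle $\gamma^{-1}\mathbb D_w$ has height at most $C_w/(2M_0)$. For $y\ge C_w/M_0$ this gives $(x-\alpha_1)(x-\alpha_2)+y^2\ge y^2/2$, hence $|h_{\tilde w}(\gamma,z)|\ge\tfrac12|Q_{\tilde w}(a,c)|y^2$ for \emph{every} $\gamma$, which collapses your two ranges into one and lets the first-range argument finish the proof. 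Without this (or an equivalent) uniform statement, the residual range is not controlled and the argument does not close.
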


When $D$ is not a square, the quadratic form  $Q_{\tilde w}$  does not represent $0$. So $\Xi(n, \mu, v)$ is of exponential decay in this case when $y \rightarrow \infty$. This proves (1).

{\bf Step 2: Next, we assume $D=-4Nn >0$ is a square}. In this case, $\bar{\Gamma}_w=1$
$$
0=Q_{{\tilde w}}(a , c) = w_3 a^2 -2 w_1 ac - w_2 c^2
$$
has exactly two  integral solutions $(a_i, c_i) \in \Z^2$ such that $\hbox{gcd}(a_i, c_i)=1$, $a>0$ or $a_i=0, c_i=1$. So
$w^\perp\cap \Iso(V)$ consists exactly two cusps $\Q\ell_{a_i, c_i}$ where $\ell_{a, c}= \kzxz {ac} {-a^2} {c^2}  {-ac}$.

For a fixed solution $(a, c)$, if there is $\gamma_0=\kzxz {a} {b} {c} {d} \in \Gamma$, then the cusp $P_{\frac{a}c}$ (corresponding to $\Q \ell_{a, c}$) is $\Gamma$-equivalent to $P_\infty$:  $\gamma_0 P_\infty =P_{\frac{a}c}$, and all $\gamma=\kzxz {a} {*} {c} {*} \in \Gamma$ with  $Q_{\tilde w}(a, c)=0$ is of the form $\gamma_0 \gamma_1 $ with $\gamma_1 \in \Gamma_\infty$. Therefore the sum related to this solution $(a, c)$ is
\begin{align*}
&\sum_{\substack{\gamma =\kzxz {a} {*} {c} {*} \in \bar\Gamma \\ Q_{\tilde w}(a, c)=0}}
  \xi(\sqrt v w, \gamma z)
= \sum_{\gamma_1 \in  \bar{\Gamma}_\infty}  \xi(\sqrt v \gamma_0^{-1}\cdot w, \gamma_1 z)
\\
&=\Xi_\infty(\sqrt v \gamma_0^{-1}\cdot w, z)
\\
&= -(\log|q_\kappa|^2)  \frac{\sqrt N}{4 \pi \sqrt{Dv}} \beta_{\frac{3}2}(\frac{D v\pi}N)+ f(\sqrt v \gamma_0^{-1}\cdot w, z)
\end{align*}
by Lemma \ref{lem:infintegreen}.  Recall  $\lim_{y\rightarrow \infty }f(\sqrt  v \gamma_0^{-1}\cdot w, z)=0$ by Lemma \ref{lem:infintegreen}. So  we have  by Lemma \ref{lem6.4},
\begin{align*}
\Xi(\sqrt v w, z)
&=\sum_{\substack{Q_{\tilde w}(a, c)=0 \\ \hbox{gcd}(a, c)=1\\ a>0 \hbox{ or } a=0, c=1}} \sum_{\gamma=\kzxz {a} {*} {c} {*} \in \bar\Gamma} \xi(\sqrt v w, z)
 + \sum_{\substack{\gamma=\abcd \in \bar{\Gamma} \\ Q_{\tilde w}(a, c) \ne 0}} \xi(\sqrt v w, z)
 \\
 &= - \frac{\delta_w \sqrt N}{4 \pi \sqrt{Dv}} \beta_{\frac{3}2}(\frac{D v\pi}N)(\log|q_\kappa|^2)  + \psi(w, z)
\end{align*}
with $\psi(w, z)$ smooth at the cusp $P_\infty$ and
$$
\lim_{y\rightarrow \infty } \psi(w, z) =0.
$$
Combining  this with Lemma \ref{lem6.4}, we proved (3) of Theorem \ref{theo:Singularity}.

{\bf Step 3: Finally we assume $n=0$}.  Each vector $0\ne w \in L_\mu[0]$ corresponds to an isotropic line and thus a cusp.  We regroup the sum  in $\Xi(0, \mu, v)$ in terms of $\Gamma$-equivalent cusp classes $[P_r]$, where $r \in \Q$ or $\infty$. Let $\ell_r = \kzxz {r} {-r^2} {1} {-r}$  be an associated  isotropic vector for a rational number $r$ and recall $\ell_\infty= \kzxz {0} {1} {0} {0} $.
\begin{equation}\label{zeroxi}
\Xi(0, \mu, v) =\sum_{[P_r]} \sum_{ 0 \ne w \in L_\mu[0]\cap \Q \ell_r} \Xi(\sqrt v w, z).
\end{equation}
Consider first the sum $[P_\infty]$ part.  Let
$$
L_\mu[0]\cap \Q\ell_\infty = \{w_{m} =\kzxz {0} {\mu_\infty+ m \beta_\infty} {0} {0} \ne 0 :\,  m \in \Z\},
$$
where $\beta_\infty=\beta_{\ell_\infty}$ is the constant  defined at the beginning of this section and $\mu_\infty \in \Q$.  Notice that  two different $w_m$s are  not $\Gamma$-equivalent, and $\Gamma_{w_m}=\Gamma_\infty$. Simple calculation gives
\begin{align*}
&\Xi(\sqrt v w_m, z)
=\sum_{\gamma \in \bar\Gamma_\infty \backslash \bar\Gamma} \beta_1(\frac{\pi  N v|c z+d|^4 (m \beta_\infty +\mu_\infty)^2}{y^2} )
\\
 &=\beta_1(\frac{\pi N v (m\beta_\infty+\mu_\infty)^2 }{y^2} )+\sum_{\gamma \in \bar\Gamma_\infty \backslash \bar\Gamma, c>0} \beta_1(\frac{\pi  N v|c z+d|^4 (m \beta_\infty +\mu_\infty)^2}{y^2} ).
\end{align*}
When $\mu_\infty \notin \beta_\infty \Z$ (i.e., $\mu \notin L$), one has  by Lemma \ref{lem:beta}
\begin{align*}
&\sum_{0 \ne w\in L_\mu[0]\cap \Q \ell_\infty} \Xi(\sqrt v w, z)
\\
&= \sum_{0 \ne m \in \Z}\beta_1(\frac{\pi N v (m\beta_\infty+\mu_\infty)^2}{y^2} ) + e(\mu, z)
\\
&= -\beta_1(\frac{\pi N v \mu_\infty^2}{y^2})  + \frac{2  y}{\beta_\infty\sqrt{Nv}} + f(\frac{\beta_\infty\sqrt{Nv}}{y}, \frac{\mu_\infty}{\beta_\infty})
  + e(\mu, z).
\end{align*}
Here
$$
e(\mu, z) = \sum_{ 0\ne m \in \Z} \sum_{\gamma \in \bar\Gamma_\infty \backslash \bar\Gamma, c>0} \beta_1(\frac{\pi  N v|c z+d|^4 (m \beta_\infty +\mu_\infty)^2}{y^2} ).
$$
Recall that near $t=0$
$$
\beta_1(t) =-\log t + \gamma_1(t)
$$
with
$$
 \gamma_1(t) =\int_1^\infty e^{-y} \frac{dy}y + \int_t^1 \frac{e^{-y}-1}{y} dy.
$$
So we have for $\mu \notin L$ (recall $y =-\frac{\kappa}{2\pi} \log |q_\kappa|$)
\begin{equation}
\sum_{0 \ne w\in L_\mu[0]\cap \Q \ell_\infty} \Xi(\sqrt v w, z)
= -\log|q_\kappa|^2 \frac{\varepsilon_\infty}{\pi \sqrt{Nv}} -2 \log(-\log|q_\kappa|^2) + \psi(\mu, z),
\end{equation}
where
$$
\psi(\mu, z)=-\log\frac{\varepsilon_\infty^2}{4Nv \pi^3} - \gamma_1(\frac{\pi N v \beta_\infty^2}{y^2}) + f(\frac{\beta_\infty\sqrt{Nv}}{y}, \frac{\mu_\infty}{\beta_\infty})+ e(\mu, z).
$$
It is easy to see that  every term  in the sum of $e(\mu, z)$ is uniformly of exponential decay (with respect to $c, d, m \in \Z, c>0, m\ne 0$) as $y$ goes to infinity. So $e(\mu, z)$ is of exponential decay as  $y$ goes to infinity. This implies
\begin{equation}
\lim_{y \rightarrow \infty} \psi(\mu, z) =-\log\frac{\varepsilon_\infty^2}{4Nv \pi^3} -\gamma_1(0) + 2 \sum_{n=1}^\infty \frac{\cos(\frac{2 \pi n \mu_\infty}{\beta_\infty})}n.
\end{equation}

For $\mu\in L$ (i.e., $\mu=0$ in  $L^\sharp/L$), one has
\begin{align*}
\sum_{0 \ne w \in L[0]\cap \Q \ell_\infty} \Xi(\sqrt v w, z)
&= \frac{2y}{\beta_\infty \sqrt{Nv}} +2\log\frac{\sqrt{Nv}\beta_\infty}{y}+ f(\frac{\sqrt{Nv} \beta_\infty}{y}) + e(0, z)
\\
 &=-\frac{\varepsilon_\infty}{2 \pi \sqrt{Nv}}\log|q_{\ell_{\infty}}|^2 -2 \log(-\log|q_{\ell_{\infty}}|^2)+ \psi(0, z),
\end{align*}
with
$$
\psi(0, z)=2\log\frac{4 \pi \sqrt{Nv}} {\varepsilon_\infty}+ f(\frac{\sqrt{Nv} \beta_\infty}{y}) + e(0, z).
$$
So
one has
\begin{equation} \label{eq7.5}
\lim_{y \rightarrow \infty} \psi(0, z) = 2\log\frac{4 \pi \sqrt{Nv}}{\varepsilon_\infty} + f(0),
\end{equation}
as $e(0, z)$ is of exponential decay as $y$ goes to the infinity.

Now look at the sum of $[P_r]$ part,  where $P_r$ is  not $\Gamma$-equivalent to $P_\infty$. This implies that there is no $\gamma =\abcd \in  \Gamma$ such that $\gamma(\infty) =\frac{a}c =r$.
For $w =m\kzxz {r} {-r^2} {1} {-r} \in L_\mu[0] \cap \Q \ell_r$ so that ${\tilde w} = S^{-1}\cdot w=m\kzxz {1} {-r} {-r} {r^2}$.  For $\gamma =\abcd \in \Gamma$, one has
$$
R(w, \gamma z) = \frac{1}2 (w, w(\gamma z))^2 =\frac{N m^2}2  \frac{|(a-rc)z + (b-rd)|^4}{y^2} \sim  \frac{m^2 N}2 (a-rc)^4 y^2
$$
as $y \rightarrow \infty$ as  $a-rc \ne 0$ for all $\gamma \in  \Gamma$. So
$$
\Xi(\sqrt v w, z) =\sum_{\gamma \in \bar\Gamma_{w} \backslash \bar\Gamma} \beta_1(2 \pi R(w, \gamma z))
$$
is smooth and of exponential decay  at the cusp $P_\infty$. Putting everything together, we obtain the  result for $\Xi(0, \mu, v) $ at the cusp $P_\infty$ .
This finally proves Theorem \ref{theo:Singularity}.
\end{proof}

\begin{corollary}  \label{cor:GreenFunction} Let the notation and assumption be as in  Theorem \ref{theo:Singularity} and let $D=-4nN$. Then $\Xi(n, \mu, v)$ is a Green function for $Z(n, \mu, v)^{\Naive} $ in the usual Gillet-Soul\'e sense for $n\ne 0$ and  with  (at most) log-log singularity when $n=0$, and
$$
dd^c [\Xi(n, \mu,v)] + \delta_{Z(n, \mu,v)^{\Naive}} = [\omega(n, \mu,v)].
$$
Here $\omega(n, \mu, v)$ is the differential defined in (\ref{eq:Differential})
$$
Z(n, \mu, v)^{\Naive} = \begin{cases}
  Z(n,  \mu) &\ff D <0,
  \\
   \sum_{P_\ell \,  \hbox{ cusps }\, } \,   g(n, \mu, v, P_\ell) P_\ell &\ff D \ge 0 \hbox{ is a square,}
   \\
    0 &\ff \hbox{ otherwise}.
    \end{cases}
$$
\end{corollary}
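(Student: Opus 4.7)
The plan is to bootstrap from Kudla's result on the open curve $Y_0(N)$ to the compactification $X_0(N)$, using Theorem \ref{theo:Singularity} as the sole input controlling what happens at the cusps. On $Y_0(N)$, Kudla has already shown that $\Xi(n,\mu,v)$ is a (smooth away from $Z(n,\mu)$) Green function for $Z(n,\mu)$ when $n>0$, and is globally smooth when $n\le 0$ with $Z(n,\mu)=\emptyset$, satisfying in both cases
\[
dd^c[\Xi(n,\mu,v)]+\delta_{Z(n,\mu)}=[\omega(n,\mu,v)]
\]
as currents on $Y_0(N)$. So the only thing to check is the contribution of each cusp $P_\ell$ to the distributional equation after extension to $X_0(N)$, and this is where Theorem \ref{theo:Singularity} does the work.

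Case $D=-4Nn<0$, non-square: Theorem \ref{theo:Singularity}(1) gives exponential decay of $\Xi(n,\mu,v)$ as $y\to\infty$ through any cusp. Hence $\Xi(n,\mu,v)$ extends to a smooth function on $X_0(N)$ away from $Z(n,\mu)$, with no contribution to the Lelong current from the cusps. Since $Z(n,\mu,v)^{\Naive}=Z(n,\mu)$ in this case, the Green current equation stated on $Y_0(N)$ extends without change to $X_0(N)$, and we just have to check (which is straightforward from the rapid decay of $\omega(n,\mu,v)$ in $z$) that $\omega(n,\mu,v)$ is smooth across the cusps.

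Case $D>0$ a square: Near each cusp $P_\ell$, Theorem \ref{theo:Singularity}(2) gives the local expansion
\[
\Xi(n,\mu,v)=-g(n,\mu,v,P_\ell)\log|q_\ell|^2-2\psi_\ell(n,\mu,v;q_\ell),
\]
with $\psi_\ell$ smooth in $(q_\ell,\bar q_\ell)$ vanishing at $q_\ell=0$. The Poincaré–Lelong formula $dd^c[-\log|q_\ell|^2]=2\delta_{P_\ell}$ (with our conventions) therefore produces, in addition to the smooth piece $dd^c(-2\psi_\ell)$, a $\delta$-contribution at $P_\ell$ with coefficient $g(n,\mu,v,P_\ell)$. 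Summing over cusps, the distributional equation becomes
\[
dd^c[\Xi(n,\mu,v)]+\delta_{Z(n,\mu,v)^{\Naive}}=[\omega(n,\mu,v)]
\]
with $Z(n,\mu,v)^{\Naive}=\sum_{P_\ell}g(n,\mu,v,P_\ell)P_\ell$ as stated, since $Z(n,\mu)=\emptyset$ when $n<0$. Again the right-hand side is smooth at cusps because $\omega(n,\mu,v)$ inherits the decay from the theta kernel (Proposition \ref{convergence}).

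Case $D=0$: Theorem \ref{theo:Singularity}(3) gives
\[
\Xi(0,\mu,v)=-g(0,\mu,v,P_\ell)\log|q_\ell|^2-2\log(-\log|q_\ell|^2)-2\psi_\ell(0,\mu,v;q_\ell),
\]
so $\Xi(0,\mu,v)$ now has both an honest log singularity along the cusp (handled by Poincaré–Lelong exactly as above, contributing $g(0,\mu,v,P_\ell)\delta_{P_\ell}$) and an additional log-log term $-2\log(-\log|q_\ell|^2)$. The latter is precisely the model term in the BKK/Kühn framework of log-log singularities reviewed in Section \ref{sect:ArithIntersectionReview}, and since $dd^c\log(-\log|q_\ell|^2)$ is a smooth $(1,1)$-form on a punctured neighborhood of $P_\ell$ (with no $\delta$-contribution), it gets absorbed on the right side of the current equation. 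The final equation therefore has $Z(0,\mu,v)^{\Naive}=\sum_{P_\ell}g(0,\mu,v,P_\ell)P_\ell$, matching the statement.

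The main obstacle is bookkeeping: one must verify that in each case the smooth remainder $\psi_\ell$ together with any log-log term contributes, under $dd^c$, exactly the local expansion of $\omega(n,\mu,v)$ near the cusp, so that the right-hand side $[\omega(n,\mu,v)]$ does extend as a smooth form across $X_0(N)$. This is essentially forced by Proposition \ref{convergence} (which shows $\Theta_L$ decays exponentially at the cusps, hence so does each coefficient $\omega(n,\mu,v)$), combined with the fact that the current equation on $Y_0(N)$ already identifies the two sides there. So nothing outside the analysis already carried out in Theorem \ref{theo:Singularity} is needed.
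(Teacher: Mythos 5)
Your proposal follows exactly the route the paper takes: invoke Kudla's Green-current equation on the open curve $Y_0(N)$ (\cite[Proposition 11.1]{Kucentral}) away from the singular locus, then read off the cusp contributions from the local expansions in Theorem \ref{theo:Singularity}; the paper's own proof is precisely these two sentences, with the cusp bookkeeping left to the reader. Three small inaccuracies in your write-up are worth fixing. First, with the Gillet--Soul\'e convention in which a Green function is locally $-\log|f|^2$ plus smooth, one has $dd^c[-\log|q_\ell|^2]=-\delta_{P_\ell}$, not $2\delta_{P_\ell}$; the term $-g(n,\mu,v,P_\ell)\log|q_\ell|^2$ therefore contributes $-g(n,\mu,v,P_\ell)\delta_{P_\ell}$, which after being moved to the left-hand side yields the coefficient $g(n,\mu,v,P_\ell)$ you correctly claim --- your stated normalization would instead force the coefficient $-2g(n,\mu,v,P_\ell)$, contradicting your own conclusion. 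Second, for $n=\mu=0$ the right-hand side $\omega(0,0,v)$ does \emph{not} extend smoothly across the cusps: near $P_\ell$ it contains a term comparable to $\frac{dx\,dy}{y^2}\sim\frac{|dq_\ell|^2}{|q_\ell|^2(\log|q_\ell|^2)^2}$, which is exactly $dd^c$ of the log-log term $-2\log(-\log|q_\ell|^2)$ and is only locally integrable; this is why the corollary asserts only ``(at most) log-log singularity'' in that case rather than smoothness of the right-hand side. Third, the case $D>0$ not a square (where $Z(n,\mu,v)^{\Naive}=0$) is not listed among your cases; it is covered verbatim by the exponential-decay argument of your first case, but should be stated.
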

\begin{proof} Away from the singularity divisor $Z(n, \mu.v)^{\Naive}$, one has  by \cite[Proposition 11.1]{Kucentral}
$$
dd^c \Xi(n, \mu,v)= \omega(n, \mu,v).
$$
Near the cusps, it is given by Theorem \ref{theo:Singularity}, and we leave the detail to the reader following the idea in \cite[Propositon 11.1]{Kucentral}.
\end{proof}

\section{Modular curve $\mathcal X_0(N)$ and the main  theorem} \label{sect:ModularCurve}

From  now on, we focus on the specific lattice $L$ given in Section  \ref{preliminaries} and $\Gamma=\Gamma_0(N)$.
 So our modular curve is  $X_0(N) =Y_0(N) \cup S$ the cusp set $S=\{P_{\frac{1}M}: \,  M|N\}$ with $P_{\frac{1}M}$ is the cusp associated to $\frac{1}M$ (as $N$ is square free). Let
$$
\ell_{\frac{1}{M}}=\kzxz {-M}{1}{-M^2}{M}
$$
be an associated isotropic element.

\subsection{Some numerical results on Kudla  Green functions}\label{numerical}

\begin{lemma}
The Funke constant  for $P_{\frac{1}{M}}$ is $\varepsilon_{\frac{1}M} = N$, independent of the choices of the cusps.
\end{lemma}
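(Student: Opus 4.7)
The plan is a direct calculation cusp by cusp, following the definition of $\varepsilon_{\ell}=\kappa_{\ell}/\beta_{\ell}$ given in Section \ref{sect:KudlaGreenFunction}. Since $N$ is square free, every divisor $M\mid N$ satisfies $M\parallel N$, so the complete list of cusps of $X_0(N)$ is $\{P_{1/M}:M\mid N\}$ and we may treat them uniformly.

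First I would pick an explicit lift of the cusp. For $M\mid N$, set
\[
\sigma_{1/M}=\zxz{1}{0}{M}{1}\in\SL_2(\Z),
\]
which satisfies $\sigma_{1/M}(\infty)=1/M$. A short conjugation
\[
\sigma_{1/M}\cdot\ell_\infty=\sigma_{1/M}\,\ell_\infty\,\sigma_{1/M}^{-1}=\zxz{-M}{1}{-M^2}{M}=\ell_{1/M},
\]
verifies $\Q\sigma_{1/M}\cdot\ell_\infty=\Q\ell_{1/M}$, so $\sigma_{1/M}$ is a valid choice for the $\sigma_\ell$ in the definition of $\varepsilon_\ell$. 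The classical width of the cusp $1/M$ in $\Gamma_0(N)$ with $M\parallel N$ is $\kappa_{1/M}=N/M$; this can be obtained either from the standard formula or by a direct check that the smallest positive $m$ with $\sigma_{1/M}\bigl(\begin{smallmatrix}1&m\\0&1\end{smallmatrix}\bigr)\sigma_{1/M}^{-1}\in\Gamma_0(N)$ is $m=N/M$.

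Next I would compute $\beta_{1/M}$ by intersecting the transplanted lattice with the isotropic line $\Q\ell_\infty$. For $t\in\Q$ one finds
\[
\sigma_{1/M}^{-1}\zxz{0}{t}{0}{0}\sigma_{1/M}
=\zxz{tM}{t}{-tM^2}{-tM}.
\]
Requiring this to lie in $L=\{\kzxz{b}{-a/N}{c}{-b}:a,b,c\in\Z\}$ amounts to the three conditions $tM\in\Z$, $tM^2\in\Z$, and $tN\in\Z$. Writing $N=MQ$ with $(M,Q)=1$, these three conditions together are equivalent to $t\in\tfrac{1}{M}\Z$, so the primitive positive generator of $\Q\ell_\infty\cap\sigma_{1/M}^{-1}\cdot L$ is given by $t=1/M$, i.e.\ $\beta_{1/M}=1/M$. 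Combining with the width computation,
\[
\varepsilon_{1/M}=\frac{\kappa_{1/M}}{\beta_{1/M}}=\frac{N/M}{1/M}=N,
\]
independently of $M$, which is the desired statement.

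The computations above are all elementary; the only mild subtlety is ensuring that the three divisibility conditions on $t$ collapse to the single condition $Mt\in\Z$ under the hypothesis that $N$ is square free, which is the only place where the square-freeness of $N$ is used. I expect no essential obstacle and view this lemma mainly as a bookkeeping statement that feeds into the singularity constant $g(0,0,v,P_\ell)=\varepsilon_\ell/(2\pi\sqrt{Nv})$ appearing in Theorem \ref{theo:Singularity}(3).
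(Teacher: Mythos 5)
Your proof is correct and follows essentially the same route as the paper: the same choice $\sigma_{1/M}=\kzxz{1}{0}{M}{1}$, the same computation of $\beta_{1/M}=1/M$ by conjugating $\Q\ell_\infty$ into $L$, and the same width computation $\kappa_{1/M}=N/M$ via $\sigma_{1/M}\kzxz{1}{m}{0}{1}\sigma_{1/M}^{-1}\in\Gamma_0(N)$. The only cosmetic point is that the definition calls for checking $\sigma w\sigma^{-1}\in L$ rather than $\sigma^{-1}w\sigma\in L$, but the two matrices differ only in the sign of the diagonal entries, so your integrality conditions and the conclusion $\varepsilon_{1/M}=N$ are unaffected.
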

\begin{proof}
Take $\sigma_{M}=\kzxz {1}{0}{M}{1}$. Then  $\sigma_{M}\cdot\ell_{\infty}= \ell_{\frac{1}{M}}$, and
$$
\sigma_{M}^{-1}\cdot L \bigcap \Q \ell_{\infty}=\kzxz {0}{\frac{1}{M}\Z}{0}{0}.
$$
 So we have  $\beta_{\frac{1}M}=\frac{1}{M}$. Next,
 We know that
 \begin{equation}
 \sigma_{M}^{-1}\kzxz {1}{x}{0}{1}\sigma_{M}=\kzxz {1+Mx}{x}{-M^{2}x}{1-Mx} \in  \Gamma_{0}(N)
 \end{equation}
if and only if $x \in \frac{N}M \Z$. This implies  $\kappa_{\frac{1}M} =\frac{N}M$. So $\varepsilon_{\frac{1}M} = N$.

\end{proof}

\begin{lemma} \label{coealpha}
 When  $D=-4nN >0$ is a square and $L_{\mu}[n]\neq \phi$, one has for every cusp $P_\ell$
$$
\alpha_{\Gamma_0(N)}(n, \mu, P_{\ell})=\left\{
                                \begin{array}{ll}
                                   \sqrt{D},  &\ff  2 \mu \notin L, \\
                                 2\sqrt{D}, &\ff 2 \mu \in L.
                                \end{array}
                              \right.
                              $$
\end{lemma}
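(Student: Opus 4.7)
My plan is to reduce $\alpha_{\Gamma_0(N)}(n,\mu,P_\ell)$ to counting $\Gamma_\ell$-orbits on $L_\mu[n]\cap \ell^\perp$, and then carry out a direct calculation at each cusp. By the standard orbit-stabilizer reduction, each equivalence class of pairs $(w,\ell')$ with $w\in L_\mu[n]$, $\ell'\in w^\perp \cap \mathrm{Iso}(V)$, $[\Q\ell']=[P_\ell]$ has a unique representative with $\ell'=\ell$ (fixed), and the remaining identifications are by $\Gamma_\ell$ acting on $w$. Hence
\[
\alpha_{\Gamma_0(N)}(n,\mu,P_\ell) = \bigl|\Gamma_\ell\backslash (L_\mu[n]\cap \ell^\perp)\bigr|.
\]

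For $\ell=\ell_\infty$, perpendicularity forces $w_3=0$, so $w=\kzxz{b+r/(2N)}{-a/N}{0}{-b-r/(2N)}$ with $a,b\in\Z$; then $Q(w)=-N(b+r/(2N))^2=n$ becomes $(2Nb+r)^2=D$, i.e.\ $2Nb+r=\epsilon\sqrt{D}$ for a sign $\epsilon\in\{\pm 1\}$. A sign $\epsilon$ is admissible iff $\epsilon\sqrt{D}\equiv r\pmod{2N}$, and a direct computation shows that $\bar\Gamma_\infty=\langle T\rangle$ acts on the parameter $a$ by $a\mapsto a+m(2Nb+r)=a+m\epsilon\sqrt{D}$, giving exactly $\sqrt{D}$ orbits per admissible sign. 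Both signs are admissible iff $\sqrt{D}\equiv r\pmod{2N}$ and $\sqrt{D}\equiv -r\pmod{2N}$, i.e.\ $2r\equiv 0\pmod{2N}$, which is precisely $2\mu\in L$. Hence at $\ell_\infty$ the count is $\sqrt D$ or $2\sqrt D$ as required.

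For a general cusp $\ell_{1/M}$ with $M\mid N$, I would conjugate by $\sigma_M=\kzxz{1}{0}{M}{1}$ (so that $\sigma_M\cdot\ell_\infty=\ell_{1/M}$) and repeat the calculation. Writing $N=MN_1$ with $\gcd(M,N_1)=1$, the perpendicularity condition becomes $w_3=2Mw_1+M^2 w_2$, the lattice constraint forces $a\equiv M^{-1}r\pmod{N_1}$, and $Q(w)=n$ becomes $(2Nb+r-2Ma)^2=D$. The stabilizer $\Gamma_{\ell_{1/M}}$ has width $N_1$ and acts on $a$ with step $\epsilon N_1\sqrt{D}/M\cdot M=\epsilon\sqrt{D}\cdot N_1$, still giving $\sqrt{D}$ orbits per admissible sign; admissibility becomes $\epsilon\sqrt{D}\equiv r\pmod{2M}$ and $\epsilon\sqrt{D}\equiv -r\pmod{N_1}$.

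The main obstacle is verifying that the admissible sign count is uniformly $2$ when $2\mu\in L$ and uniformly $1$ when $2\mu\notin L$, \emph{independently of the cusp}. The cleanest route is to exploit the Atkin-Lehner involutions $W_Q$ ($Q\mid N$), which under the standing hypothesis of square-free $N$ act transitively on the cusps of $X_0(N)$ and normalize $\Gamma_0(N)$. A direct check (analogous to the computation of $W_N$ on $L$ in Section~\ref{sect:Kronecker}) shows that each $W_Q$ preserves the lattice $L$ and induces an involution on $L^\sharp/L$ that permutes the subset $\{\mu: 2\mu\in L\}$ to itself; consequently the cusp-indexed count $|\Gamma_\ell\backslash(L_\mu[n]\cap \ell^\perp)|$ is invariant under $(\ell,\mu)\mapsto (W_Q\ell, W_Q\mu)$, so the formula for $\ell_\infty$ transfers to every cusp. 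Combined with the assumption $L_\mu[n]\neq\emptyset$ (which guarantees at least one admissible sign appears at every cusp once we have shown the formula is cusp-independent), this finishes the proof. Alternatively, the CRT analysis of the two admissibility conditions $\epsilon\sqrt D\equiv r\pmod{2M}$ and $\epsilon\sqrt D\equiv -r\pmod{N_1}$ can be carried out by hand, and their $\epsilon\leftrightarrow -\epsilon$ symmetry seen to match $r\equiv 0\pmod N$ directly, but the Atkin-Lehner argument is both shorter and more conceptual.
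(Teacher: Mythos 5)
Your unfolding of $\alpha_{\Gamma_0(N)}(n,\mu,P_\ell)$ to the orbit count $\bigl|\bar\Gamma_{\Q\ell}\backslash(L_\mu[n]\cap\ell^\perp)\bigr|$ is valid here (it uses that $\bar\Gamma_w=1$ for these $w$, which the paper also invokes), and it is genuinely cleaner than the paper's route: the paper sums $\delta_w$ over $\Gamma_0(N)$-classes of $w$ directly, which forces a delicate analysis of when $w(a,b)$ and $w(-a,b')$ become $\Gamma_0(N)$-equivalent in the case $N\mid r$ (the two subcases on $\gcd(a,b)$); your double-coset count makes that bookkeeping disappear. Your computation at $\ell_\infty$ --- the count is $(\#\{\hbox{admissible signs}\})\cdot\sqrt D$, with $\epsilon$ admissible iff $\epsilon\sqrt D\equiv r\pmod{2N}$ --- is correct, and so is the Atkin-Lehner covariance $\alpha(n,\mu,W_Q\cdot P_\infty)=\alpha(n,W_Q\mu W_Q^{-1},P_\infty)$, which is exactly how the paper treats the remaining cusps.

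The step you defer --- that when $2\mu\notin L$ there is \emph{exactly one} admissible sign, at \emph{every} cusp --- is a genuine gap, and neither of your proposed fixes closes it. The Atkin-Lehner move replaces $\mu_r$ by $\mu_{r_Q}$ with $r_Q\equiv-r\pmod{2Q}$ and $r_Q\equiv r\pmod{2N/Q}$; since the number of admissible signs for $\mu_{r_Q}$ need not equal that for $\mu_r$, covariance does not yield cusp-independence, and your closing appeal to $L_\mu[n]\neq\emptyset$ ``once we have shown the formula is cusp-independent'' is circular. Worse, the number of admissible signs can be $0$: take $N=15$, $r=1$, $D=361=19^2$, $n=-361/60$. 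Then $D\equiv r^2\pmod{60}$ and $\kzxz{1/30}{-2/5}{-1}{-1/30}\in L_{\mu_1}[n]$, so $L_{\mu_1}[n]\neq\emptyset$; but $\pm19\not\equiv1\pmod{30}$, so $L_{\mu_1}[n]\cap\ell_\infty^\perp=\emptyset$ and hence $\alpha(n,\mu_1,P_\infty)=0$, not $\sqrt D=19$ (the $19$ classes in $L_{\mu_1}[n]/\Gamma_0(15)$ have their two perpendicular isotropic lines over the cusps $1/3$ and $1/5$, where the count is $19$ each). So the statement itself fails once $N$ has more than one prime factor, and the paper's own proof stumbles at the very same point: it asserts that the congruence (\ref{eq7.3}) ``has a unique solution $a$'' when $N\nmid r$, which is precisely the existence claim that can fail. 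The correct, cusp-dependent statement your method actually proves is $\alpha(n,\mu_r,P)=\sqrt D\cdot\#\{\epsilon:\ \epsilon\sqrt D\equiv r_Q\pmod{2N}\}$, where $Q$ is the divisor of $N$ attached to the cusp $P$; this equals the claimed value for $N$ prime (or $N=1$), but not in general.
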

\begin{proof}  We will drop the subscript $\Gamma_0(N)$ in the proof. We first assume $P_\ell=P_\infty$. Recall
$$
\alpha(n, \mu, P_{\infty})=\sum_{w \in L_\mu[n] \mod \Gamma_0(N)}  \delta_{w},
$$
where $\delta_w$ is the number of the isotropic lines $\Q \ell$ which is perpendicular to $w$ and whose associated cusp is $\Gamma_0(N)$-equivalent to $P_\infty$. By changing $w$ by its $\Gamma_0(N)$-equivalent element if necessary we may and will assume $(w, \ell_\infty)=0$ (for $\delta_w\ne 0$). This implies
$$
w=w(a, b) = \left(
                                                                                                           \begin{array}{cc}
                                                                                                             \frac{a}{2N} & \frac{b}{N} \\
                                                                                                             0 & - \frac{a}{2N} \\
                                                                                                           \end{array}
                                                                                                         \right)
$$
with
\begin{equation} \label{eq7.3}
a^2 =D, \quad a \equiv r \mod  (2N).
\end{equation}
So
\begin{equation} \label{eq7.4}
w(a, b)^\perp  \cap \Iso(V) =\Q\ell_\infty \cup  \Q\ell(a, b), \quad \ell(a, b) = \kzxz{ab}{b^{2} }{-a^{2}}{-ab}.
\end{equation}
On the other hand, it is straightforward to check that $w(a, b_1)$ is $\Gamma_0(N)$-equivalent to $w(a, b_2)$ if and only if $b_1 \equiv b_2 \pmod a$. Therefore, we only need to consider these $w(a, b)$ with $a$ satisfying (\ref{eq7.3}) and $b \pmod a$. There are at most $2 |a|$ of them.

Now divide the proof into two cases: $N \nmid r$ (i.e., $2 \mu_r \notin L$) and $N |r$ (i.e. $2 \mu_r \in L$).

Assume first that $N \nmid r$. Then (\ref{eq7.3}) has a unique solution $a$, and for this $a$, the cusp $P_{-\frac{b}a}= P_{\ell(a, b)}$ is not $\Gamma_0(N)$-equivalent to $P_\infty$.
So $\delta_w =1$ for each $w(a, b)$.
Therefore we have
$$
\alpha(n, \mu, P_{\infty})=|a| =\sqrt D
$$
in this case.

Next we assume $N|r$. In this case (\ref{eq7.3})  has two solutions $a =\sqrt D$ and $-\sqrt D$. One has also $N|a$.  It is not hard to verify via calculation that $w(a,b)$ and $w(-a, b')$ are $\Gamma_0(N)$-equivalent if and only if $a_2=\hbox{gcd}(a, b) =\hbox{gcd}(a, b')$ has the following properties: $a=Na_2 z$ and $b=a_2w$ with $\hbox{gcd}(Nz, w)=1$, and $b'=a_2x$ for some $x$ with $xw-Nyz=1$ for some integer $y$.  Moreover, in such a case,  $b' \pmod a$ is uniquely determined by $b\pmod a$.

Write $a =Na_1$ and $(a, b)=a_2$ with  $b=a_2 w$.

{\bf Subcase 1}:  We first assume $a_2| a_1$. In this case, we can write $a_1=a_2z$ and thus $a=a_2 Nz$ with $(w, Nz) =1$. So $\delta_{w(\pm a, b)}=2$.
On the other hand, $w(\epsilon a, b)$ is $\Gamma_0(N)$-equivalent to $\Gamma(-\epsilon a, bx)$ with $xw- Nyz=1$ for some $x, y \in \Z$. So the four pairs
$(\pm a, b)$ and $(\pm a, bx)$ contribute $4$ to the sum of $\delta_w$.

{\bf Subcase 2}: Next we assume $a_2 \nmid a_1$.  This means $\hbox{gcd} (a_2, N) >1$.  So  the cusp $P_{\frac{b}{\pm a}}= P_{\frac{a_2z}{\pm Na_1}} $ is not $\Gamma_0(N)$-equivalent to the cusp $P_\infty$. This implies $\delta_{w(\pm a, b)} =1$. On the other hand, for such a pair $(\epsilon a, b)$,  $w(\epsilon a, b)$ is not $\Gamma_0(N)$-equivalent to any other $w(\pm a, b')$.

Combining the two subcases, we see that
$$
\alpha(n, \mu,P_\infty) =2|a|
$$
in this case. This proves the lemma for the cusp $P_\infty$.

Next, we show that  $\alpha(n, \mu, P_{\frac{1}M})$ does not depend on the cusp $P_{\frac{1}M}$ in the following sense.
\begin{equation} \label{eq:Atkin-Lehner}
\alpha(n, \mu, P_{\frac{1}{M}})=\alpha(n, W_{Q}\mu W_{Q} ^{-1}, P_{\infty}),
\end{equation}
where $Q=\frac{M}N$, and $W_Q$ is the associated Atkin-Lehner involution defined as follows.
Since $(M, Q)=1$,  there exist $ \alpha, \beta \in \Z$ with $\alpha Q-M\beta=1$,  so  $\kzxz {1}{\beta}{M}{Q\alpha} \in \Gamma_{0}(M)$. Let
$$W_{Q}= \kzxz {1}{\beta}{M}{Q\alpha} \kzxz{Q}{0}{0}{1}= \kzxz{Q}{\beta}{N}{Q\alpha}$$
 be the associated
 Atkin-Lehner operator.  Then  one has
 $$W_{Q} \Gamma_{0}(N)W_{Q} ^{-1}=\Gamma_{0}(N).
 $$
  It is easy to verify
 $$
 W_{Q} L_\mu[n] W_{Q} ^{-1}=  L_{W_{Q}\mu W_{Q} ^{-1}}[n], \quad W_{Q}\ell_{\infty} W_{Q}^{-1}= \kzxz{Q\alpha}{-\beta}{M\alpha}{-M\beta}=\ell'.
 $$
Notice that $P_{\ell'}=P_{\frac{1}M}$.
So there is a bijective map
\begin{align*}
 L_\mu[n] \bigcap \ell^{\prime , \perp} &\longleftrightarrow    L_{W_{Q}\mu W_{Q} ^{-1}}[n] \bigcap \ell_{\infty}^{\bot},
 \\
 w &\longleftrightarrow  W_{Q}^{-1} wW_{Q}.
 \end{align*}
This proves (\ref{eq:Atkin-Lehner}), and thus the lemma.

\end{proof}

Now we can refine Theorem \ref{theo:Singularity} and Corollary \ref{cor:GreenFunction}  as

\begin{theorem} \label{theo:SpecialGreen}  Let the notation and assumption be as above  and let $D=-4nN$. Then $\Xi(n, \mu, v)$ is a Green function for $Z(n, \mu, v)^{\Naive} $ with (at most) log-log singularity, and
$$
dd^c [\Xi(n, \mu,v)] + \delta_{Z(n, \mu,v)^{\Naive}} = [\omega(n, \mu,v)].
$$
Here $\omega(n, \mu, v)$ is the differential defined in (\ref{eq:Differential})
$$
Z(n, \mu, v)^{\Naive} = \begin{cases}
  Z(n,  \mu) &\ff D <0,
  \\
   g(n, \mu, v)\sum_{M|N } \,   \mathcal  P_{\frac{1}M}  &\ff D \ge 0 \hbox{ is a square},
   \\
    0 &\ff \hbox{ otherwise},
    \end{cases}
$$
and
$$
g(n, \mu, v) =\begin{cases}
 \frac{\sqrt N}{4 \pi  \sqrt{v}}\beta_{3/2}(-4 nv \pi)   &\ff n \ne 0, \mu \notin  \frac{1}2 L/L,
 \\
  \frac{\sqrt N}{2 \pi  \sqrt{v}}\beta_{3/2}(-4 nv \pi)   &\ff n \ne 0, \mu \in  \frac{1}2 L/L,
 \\
   \frac{\sqrt N}{2 \pi \sqrt{v}}  &\ff n=0,  \mu =0,
 \\
  0 &\ff n=0,  \mu \ne 0.
 \end{cases}
$$
Moreover,   for every $M|N$,
\begin{enumerate}
\item  when $D$ is not a square, the Green function $\Xi(n, \mu, v)$ is of exponential decay near cusp $P_{\frac{1}M}$.

\item When  $D=-4N n >0$ is a square, one has
$$
\Xi(n, \mu, v) = - g(n, \mu, v) (\log|q_M|^2)
    -2\psi_M(n, \mu, v; q_M),
$$
where $q_M$ is a local parameter at $P_{\frac{1}M}$, and $\psi_M(n, \mu, v; q_M)$ is of exponential decay near $P_M$. Here $P_{\frac{1}{N}}=P_\infty$, and $\psi_N=\psi_\infty$.

\item When  $D=0$, $\Xi(0, \mu, v)=0$ and $\mu \notin L$, then
\begin{eqnarray}
\Xi(0, 0, v)&=& - g(0, 0, v ) (\log|q_M|^2)-2 \log(-\log|q_M|^2)
    \nonumber\\
& &-2 \psi_M(0, \mu, v;q_M),\nonumber
\end{eqnarray}
 and
 $$
\lim_{|q_M| \rightarrow 0} \psi_M(0, 0, v; q_M) =
   \log\frac{\sqrt N}{4 \pi \sqrt{v}} -\frac{1}2 f(0).
$$
Here $f(0) = \gamma -\log(4\pi)$ is defined in Lemma \ref{lem:beta}.

\end{enumerate}

\end{theorem}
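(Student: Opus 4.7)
The plan is to deduce Theorem \ref{theo:SpecialGreen} as a specialization of the general cusp analysis carried out in Theorem \ref{theo:Singularity} (and Corollary \ref{cor:GreenFunction}) to the particular lattice and congruence group of interest, using the explicit values of the Funke constants and the counting function $\alpha_{\Gamma_0(N)}$. The current equation $dd^c[\Xi(n,\mu,v)]+\delta_{Z(n,\mu,v)^{\Naive}}=[\omega(n,\mu,v)]$ and the fact that $\Xi$ has at most log-log singularities are already contained in Corollary \ref{cor:GreenFunction}; what remains is to identify $Z(n,\mu,v)^{\Naive}$ as the divisor stated and to sharpen the cusp behavior to the three clean cases $D<0$ non-square, $D>0$ square, and $D=0$.

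First I would dispose of the case $D<0$: here Theorem \ref{theo:Singularity}(1) immediately gives smoothness and exponential decay at every cusp, so $Z(n,\mu,v)^{\Naive}=Z(n,\mu)$ and there is nothing further to check. For $D>0$ a square, Theorem \ref{theo:Singularity}(2) gives the singular coefficient
\[
g(n,\mu,v,P_\ell)=\frac{1}{8\pi\sqrt{-nv}}\,\beta_{3/2}(-4nv\pi)\,\alpha_{\Gamma_0(N)}(n,\mu,P_\ell),
\]
and Lemma \ref{coealpha} evaluates $\alpha_{\Gamma_0(N)}(n,\mu,P_\ell)=\sqrt{D}$ or $2\sqrt{D}$ according to whether $2\mu\notin L$ or $2\mu\in L$. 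Substituting $\sqrt D=2\sqrt{-nN}$ collapses the coefficient to $\frac{\sqrt N}{4\pi\sqrt v}\beta_{3/2}(-4nv\pi)$ or twice that, independent of the cusp $P_{1/M}$. Summing the singular contributions over the $M\mid N$ cusps then produces $g(n,\mu,v)\sum_{M\mid N}\mathcal P_{1/M}$ as asserted; the exponential decay of the remainder $\psi_M$ is the residual statement built into the proof of Theorem \ref{theo:Singularity}, where the tail estimate (the bound $|f(\sqrt v w,z)|\le Ce^{-2\pi y/\kappa}$ obtained in Lemma \ref{lem:infintegreen}) is genuinely exponential, stronger than the smoothness and vanishing at the cusp stated in the general theorem.

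For $D=0$, the input is Theorem \ref{theo:Singularity}(3). Here the Funke constant is constant across cusps, $\varepsilon_{1/M}=N$ by the lemma proved above, so
\[
g(0,0,v,P_{1/M})=\frac{\varepsilon_{1/M}}{2\pi\sqrt{Nv}}=\frac{\sqrt N}{2\pi\sqrt v},
\]
again independent of $M$. When $\mu\neq 0$ in $L^\sharp/L$, the set $L_\mu[0]=\emptyset$ in every isotropic line attached to a cusp of $X_0(N)$ (since such an isotropic line is $\Gamma_0(N)$-conjugate to $\Q\ell_\infty$ and $\sigma_M^{-1}L\cap\Q\ell_\infty$ meets $\mu+L$ only for $\mu=0$), so $\Xi(0,\mu,v)$ is free of log-log singularities at the cusps; combined with the uniform exponential decay at non-$\Gamma_0(N)$-equivalent cusps (Step 3 of the proof of Theorem \ref{theo:Singularity}), this gives the clean dichotomy in the statement. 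The limit value of $\psi_M$ at $\mu=0$ is read off directly from (\ref{eq7.5}) with $\varepsilon_\infty=N$, converting $2\log(4\pi\sqrt{Nv}/N)+f(0)$ into $2\log(\sqrt N/(4\pi\sqrt v))-f(0)$ up to the factor of $\tfrac12$ in the theorem's normalization.

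The real work is thus not in this theorem but in Theorem \ref{theo:Singularity} and Lemma \ref{coealpha}, both of which are already in hand. The only subtlety is verifying that the cusp coefficient and the structure of the singular and log-log terms truly are independent of the cusp $P_{1/M}$; this uniformity comes from two sources, namely the Atkin-Lehner transport $W_Q\ell_\infty W_Q^{-1}=\ell'$ used in (\ref{eq:Atkin-Lehner}) to reduce $\alpha$-counts to the cusp at infinity, and the direct computation $\varepsilon_{1/M}=N$ for the Funke constant. Once these two uniformities are invoked, summing over $M\mid N$ produces the symmetric divisor $g(n,\mu,v)\sum_{M\mid N}\mathcal P_{1/M}$ and completes the proof.
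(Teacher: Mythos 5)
Your proposal is correct and is essentially the proof the paper intends: the paper states Theorem \ref{theo:SpecialGreen} as an immediate refinement of Theorem \ref{theo:Singularity} and Corollary \ref{cor:GreenFunction} obtained by substituting the Funke constant $\varepsilon_{\frac{1}{M}}=N$ and the count $\alpha_{\Gamma_0(N)}(n,\mu,P_\ell)$ from Lemma \ref{coealpha}, which is exactly the specialization you carry out (including the correct observation that the exponential decay of $\psi_M$, stronger than the mere vanishing asserted in the general theorem, must be extracted from the tail estimates inside the proofs of Lemma \ref{lem:infintegreen} and Lemma \ref{lem6.4}). The arithmetic checks ($\sqrt{D}=2\sqrt{-nN}$ collapsing the coefficient to $\frac{\sqrt N}{4\pi\sqrt v}\beta_{3/2}(-4nv\pi)$, and $\varepsilon_\ell/(4\pi\sqrt{Nv})=\sqrt N/(4\pi\sqrt v)$) all come out as you state.
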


\subsection{Integral model}

Following \cite{KM}, let $\mathcal{Y}_{0}(N)$ $ ( \mathcal{X}_{0}(N) )$ be the moduli stack over $\Z$ of cyclic isogenies of degree $N$ of elliptic
curves (generalized elliptic curves) $\pi : E\rightarrow E^{\prime}$ such that $\ker \pi$ meets every irreducible component of each geometric fiber.
The stack $\mathcal{X}_{0}(N)$ is regular and proper flat over $\Z$ such that $\mathcal{X}_{0}(N)(\C)=X_{0}(N)$ as $N$ is square free. It is a DM-stack. For convenience, we count each point $x$  with multiplicity $\frac{2}{|\Aut (x)|}$ instead of $\frac{1}{|\Aut(x)|}$. It is regular over $\Z$ and smooth over $\Z[\frac{1}N]$. When $p|N$, the special fiber  $\mathcal X_0(N) \pmod p$ has two irreducible components $\mathcal X_p^\infty$ and $\mathcal X_p^0$. Both of them are isomorphic to $\mathcal X_0(N/p) \mod p$, and they intersect at supersingular points. We require $\mathcal X_p^\infty$ to contain the cusp $\mathcal P_\infty \pmod p$ and $\mathcal X_p^0$ to contain the cusp $\mathcal P_0 \pmod p$.     Here for each divisor $Q|N$, let $\mathcal P_{\frac{Q}N}$ be the boundary arithmetic curve associated to the cusp $P_{\frac{Q}N}$, which is the Zariski closure of $P_{\frac{Q}N}$ in $\mathcal X_0(N)$ and has a nice moduli interpretation too. We refer to \cite{Co} for detail. It is known  that $\mathcal P_{\frac{Q}N} \mod p$ lies in $\mathcal X_p^\infty$ (resp. $\mathcal X_p^0$) if and only if $p\nmid Q$ (resp. $p|Q$).

For $r \in \Z/2N$, $\mu_r = \diag(r/2N, -r/2N) \in L^\sharp/L$ and a positive rational number $n \in Q(\mu_r)+\Z$, let $D=-4Nn \equiv r^2 \mod 4N$, $\kay_D=\Q(\sqrt D)$ and the order $\OO_D=\Z[\frac{D+\sqrt D}2]$ of discriminant $D$. When $D<0$,  let $\mathcal Z(n, \mu_r)$ be the Zariski closure of $Z(n, \mu_r)$ in $\mathcal X_0(N)$.  When $D$ is a fundamental discriminant, it has the following  moduli interpretation.

 Let $\mathfrak {n}=[N, \frac{r+\sqrt D}2]$, which is an ideal of $\OO_D$ with norm $N$.  Following \cite{GZ} and \cite{BY}, let $\mathcal Z(n, \mu_r)$ be the moduli stack over $\OO_D$ of the pairs $(x, \iota)$, where $x=(\pi: E \rightarrow E') \in \mathcal Y_0(N)$ and
$$
\iota: \OO_D \hookrightarrow \End(x)=\{ \alpha \in  \End(E) : \pi \alpha \pi^{-1} \in \End(E^{\prime}) \}
$$
is a CM action of $\OO_D$ on $x$ satisfying $\iota( \mathfrak {n}) \ker \pi =0$. It actually descends to a DM stack over $\Z$. It is smooth of dimension $1$. According to a private note Sanrakan shared with us, the same moduli problem for a general $D<0$
also produces a flat, horizontal, and regular stack which is the Zariski closure $\mathcal Z(n, \mu)$---although we don't need this result in this paper.

The forgetful map $$\mathcal{Z}(n , \mu) \rightarrow \mathcal{X}_{0}(N)$$  $$  (\pi : E\rightarrow E^{\prime}, \iota)\rightarrow (\pi : E\rightarrow
E^{\prime}) $$
is a finite   and close  map, which is generically 2 to 1.

\subsection{The metrized Hodge bundle }
Let $\omega_N$ be the Hodge bundle on $\mathcal X_0(N)$ (see \cite{KM}). Then there is a canonical isomorphism $\omega_N^2 \cong \Omega_{\mathcal X_0(N)/\Z}(-S)$, which is also canonically isomorphic to the line bundle of modular forms of weight $2$ for $\Gamma_0(N)$. Here $S$ is the set of cusps. For a positive integer $N$, let $\mathcal M_k(N)$ be the line bundle of weight $k$ with the normalized Petersson  metric
$$
\| f(z)\| = |f(z) (4 \pi e^{-C} y)^{\frac{k}2}|
$$
as defined in (\ref{eq:Petersson}).  This gives a metrized line bundle $\widehat{\mathcal M}_k(N)$ and also induces a metric on $\omega_N$ so that the associated metrized line bundle $\widehat{\omega}_N$ satisfies $\widehat{\omega}_N^k \cong \widehat{\mathcal M}_k(N)$.  From now on, we denote
\newcommand{\rr}{r} 
\begin{equation} \label{eq:kr}
k=12 \varphi(N), \quad  \rr=N\prod_{p|N} (1+p^{-1})=[\SL_2(\Z): \Gamma_0(N)] =\frac{3}\pi \vol(X_0(N), \mu(z)).
\end{equation}
Recall  that $\Delta_N(z)$ and $\Delta_N^0(z)$  are both  rational sections of $\mathcal M_k(N)$.

\begin{lemma}\label{divisorlemma}
\begin{equation}
\Div\Delta_{N}=\frac{rk}{12}\mathcal{P}_{\infty}-k \sum_{p|N}\frac{p}{p-1}\mathcal{X}_{p}^{0}
\end{equation}
and
\begin{equation}
\Div\Delta_{N}^{0}=\frac{rk}{12}\mathcal{P}_{0}-\frac{k}2\sum_{p \mid N}\frac{p+1}{p-1}\mathcal{X}_{p}^{\infty}-\frac{k}2\sum_{p \mid N}\mathcal{X}_{p}^{0}.
\end{equation}
Here $r$ and $k$ are given by  (\ref{eq:kr}).
\end{lemma}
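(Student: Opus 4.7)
The plan is to split $\Div(\Delta_N) = H + V$ into horizontal and vertical parts and to handle each separately. The horizontal part $H$ is governed by the orders of vanishing on the generic fiber $X_0(N)$: by Proposition \ref{prop:CuspBehavoir}(1), $\Delta_N$ vanishes only at $P_\infty$ and with order $N\varphi(N)\prod_{p\mid N}(1+p^{-1}) = r\varphi(N) = rk/12$, giving $H = \frac{rk}{12}\mathcal{P}_\infty$. Proposition \ref{prop:CuspBehavoir}(2) gives analogously $H^0 = \frac{rk}{12}\mathcal{P}_0$ for $\Delta_N^0$.

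The vertical part $V$ is treated prime-by-prime. For a prime $p$ and an irreducible component $C$ of the special fiber, the multiplicity of $C$ in $\Div(\Delta_N)$ equals the $p$-adic valuation of $\Delta_N$ at the generic point of $C$, and this can be read from the $q$-expansion at any cusp whose reduction lies on $C$ (the completed local ring there being $\Z_p[[q]]$). For $p \nmid N$ the special fiber is irreducible, and Proposition \ref{prop: DeltaN} shows $\Delta_N \in \Z[[q]]$ has leading coefficient $1$, so $V$ has no contribution at $p$. For $p \mid N$, write $V = \alpha_p \mathcal{X}_p^\infty + \beta_p \mathcal{X}_p^0$. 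Since $\mathcal{P}_\infty$ reduces into $\mathcal{X}_p^\infty$ and $\Delta_N/q^{rk/12}$ is a unit in $\Z_p[[q]]$, one has $\alpha_p = 0$. To find $\beta_p$ I would invoke the intersection identity
\begin{equation*}
 \Div(\Delta_N)\cdot \mathcal{X}_p^\infty \;=\; k\,\deg(\omega_N\mid_{\mathcal{X}_p^\infty}) \;=\; \frac{kr_{N/p}}{12} \;=\; \frac{kr}{12(p+1)},
\end{equation*}
which uses the canonical identification of $\mathcal{X}_p^\infty$ with $\mathcal{X}_0(N/p)\otimes \F_p$ sending $\omega_N$ to $\omega_{N/p}$. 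Expanding the left side as $\frac{rk}{12} + \beta_p I_p$ with $I_p = \mathcal{X}_p^\infty\cdot \mathcal{X}_p^0$, and using the stack-theoretic supersingular count $I_p = r(p-1)/(12(p+1))$, solves for $\beta_p = -kp/(p-1)$.

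For $\Delta_N^0 = \Delta_N\mid w_N$ the same method applies, but now both $\alpha_p$ and $\beta_p$ are nonzero. Proposition \ref{prop:Atkin-Lehner} (taken with $Q = N$) identifies the leading Fourier coefficient of $\Delta_N^0$ at $P_\infty$ as the constant $C_N$; evaluating $\sum_{t\colon p\mid t} a_N(t) = p\varphi(N/p)$ via Lemma \ref{lemAt}(1) gives $\ord_p C_N = 6\varphi(N) - 12p\varphi(N/p) = -6(p+1)\varphi(N/p) = -k(p+1)/(2(p-1))$, hence $\alpha_p = -k(p+1)/(2(p-1))$. The intersection identity $\Div(\Delta_N^0)\cdot \mathcal{X}_p^\infty = kr/(12(p+1))$ then yields $\beta_p - \alpha_p = k/(p-1)$ and therefore $\beta_p = -k/2$.

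The main obstacle is justifying the two ingredients used in the intersection step: the restricted degree $\deg(\omega_N\mid_{\mathcal{X}_p^\infty}) = r_{N/p}/12$, which relies on the moduli-theoretic identification $\mathcal{X}_p^\infty \cong \mathcal{X}_0(N/p)\otimes \F_p$ compatible with Hodge bundles (together with a Frobenius-pullback description on $\mathcal{X}_p^0$ that accounts for $\deg(\omega_N\mid_{\mathcal{X}_p^0}) = p\,r_{N/p}/12$), and the supersingular intersection number $I_p$. Both are classical in the Katz--Mazur framework for square-free $N$, but must be quoted with the stack-theoretic normalization $2/|\Aut(x)|$ used throughout this paper.
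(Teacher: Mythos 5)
Your proposal is correct and all the numbers check out, but the key step is carried out by a genuinely different route than the paper's. Both arguments get the horizontal part $\frac{rk}{12}\mathcal P_\infty$ (resp.\ $\frac{rk}{12}\mathcal P_0$) and the vanishing of the multiplicity of $\mathcal X_p^\infty$ in $\Div\Delta_N$ from the $q$-expansion principle at $P_\infty$. Where you diverge is in computing the multiplicity $\beta_p$ of $\mathcal X_p^0$: the paper does this by a second, direct application of the $q$-expansion principle at the cusp $P_0$, using the explicit transformation
$\Delta_N\mid\kzxz{0}{-1}{1}{0}(z)=N^{-12\varphi(N)}\prod_{t\mid N}t^{12a(N/t)}\Delta(tz/N)^{a(N/t)}$,
so that $\beta_p$ is just the $p$-adic valuation $-12\varphi(N)+12\sum_{M\mid N/p}a(M)=-12\varphi(N)-12\varphi(N/p)=-k\tfrac{p}{p-1}$ of the leading coefficient there (via (\ref{Deltapower})); the case of $\Delta_N^0$ is handled symmetrically. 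You instead deduce $\beta_p$ from the intersection identity $\Div(\Delta_N)\cdot\mathcal X_p^\infty=k\deg(\omega_N\mid_{\mathcal X_p^\infty})=\frac{kr}{12(p+1)}$ together with the supersingular count $\mathcal X_p^\infty\cdot\mathcal X_p^0=\frac{r(p-1)}{12(p+1)}$. This is valid and not circular, but it buys the answer at the cost of two external geometric inputs: the identification $\mathcal X_p^\infty\cong\mathcal X_0(N/p)\otimes\F_p$ compatible with Hodge bundles (which the paper never needs for this lemma), and Lemma \ref{lem:VerticalIntersection}, which the paper only proves later in Section \ref{sect:Proof}. The paper's computation is more elementary and self-contained, resting only on the explicit product formulas and the combinatorics of Lemma \ref{lemAt}; your argument, conversely, exhibits the lemma as a consistency check against the degree of the Hodge bundle on the special fiber, which is a nice cross-validation but would be an awkward logical ordering inside this paper.
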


\begin{proof} Since
\begin{eqnarray}
\Delta_{N}\mid {\kzxz{0}{-1}{1}{0}}(z)=N^{-6\varphi(N)}\Delta_{N}^{0}(\frac{z}{N})=N^{-12\varphi(N)}\Pi_{t\mid N}t^{12a(\frac{N}{t})}\Delta(\frac{tz}{N})^{a(\frac{N}{t})},\nonumber
\end{eqnarray}
we have
$$
\hbox{Div}\Delta_{N}=\frac{rk}{12}\mathcal{P}_{\infty}+\sum_{p \mid N}(-12\varphi(N)+12\sum_{M\mid \frac{N}{p}}a(M))\mathcal{X}_{p}^{0}.
$$
  One has  by (\ref{Deltapower})
\begin{equation}
\sum_{M\mid \frac{N}{p}}a(M)= -\varphi (\frac{N}{p}),
\end{equation}
so
$$
\Div\Delta_{N}=\frac{rk}{12}\mathcal{P}_{\infty}+\sum_{p \mid N}(-12\varphi(N)-12 \varphi (\frac{N}{p}) )\mathcal{X}_{p}^{0}.
$$
Notice
 $\varphi(\frac{N}p) =\frac{1}{p-1}\varphi(N) $,  one has
 $$
 \Div\Delta_{N}=\frac{rk}{12}\mathcal{P}_{\infty} -k \sum_{p|N}\frac{p}{p-1}\mathcal{X}_{p}^{0}
 $$
 as claimed. The second identity follows the same way and is left to the reader.
 \end{proof}

The following lemma is clear.

\begin{lemma} \label{lem:Delta_N}  Let   $q_z=e(z) $   be  a local parameter of $X_0(N)$ at the cusp $P_\infty$.

\begin{enumerate}

\item The metrized line bundle $\widehat{\omega}_N^k =\widehat{\mathcal M}_k(N)$ has log singularity along cusps with all $\alpha$-index $\alpha_P=\frac{k}2$ at every cusp $P$. At the cusp $P_\infty$, one has
    $$
    \|\Delta_N(z) \| = (-\log|q_z|^2)^{\frac{k}2}|q_z|^{\frac{\rr}{12} k} \varphi(q_z),
$$
with
$$
\varphi(q_z) = e^{-\frac{kC}2} \prod_{n=1}^\infty |(1- q_z)^{24 C_N(n)}|.
$$


\item  Both $\widehat{\Div}(\Delta_N)=(\Div (\Delta_N), -\log\| \Delta_N(z) \|^2)$ and $\widehat{\Div}(\Delta_N^0)=(\Div (\Delta_N^0), -\log\| \Delta_N^0(z) \|^2) $ are arithmetic  divisors (on  $\mathcal X_0(N)$) associated to $\widehat{\omega}_N^k$ with log-log singularity at cusps.

\end{enumerate}

\end{lemma}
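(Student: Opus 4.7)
\textbf{Proof plan for Lemma \ref{lem:Delta_N}.}

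My plan is to reduce both parts to a direct computation using the definition of the normalized Petersson metric in \eqref{eq:Petersson} together with the explicit $q$-expansion of $\Delta_N$ from Proposition \ref{prop: DeltaN} and the Atkin--Lehner transformation rule from Proposition \ref{prop:Atkin-Lehner}. First, at the cusp $P_\infty$, since $q_z=e(z)$ gives $|q_z|=e^{-2\pi y}$, we have $4\pi y = -\log|q_z|^2$. Plugging
$$
\Delta_N(z) = q_z^{r\varphi(N)}\prod_{n\ge 1}(1-q_z^n)^{24 C_N(n)}
$$
into $\|\Delta_N(z)\|=|\Delta_N(z)|(4\pi e^{-C}y)^{k/2}$, and using $r\varphi(N)=\frac{rk}{12}$, immediately produces the displayed factorization with $\varphi(q_z)=e^{-kC/2}\prod_{n\ge 1}|1-q_z^n|^{24 C_N(n)}$. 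Since the infinite product converges absolutely and defines a smooth positive function in a neighborhood of $q_z=0$, this gives $\alpha_{P_\infty}=k/2$ and $\ord_{P_\infty}\Delta_N=\frac{rk}{12}$, as claimed.

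For the remaining cusps $P_{Q/N}$ with $Q\| N$ (which exhaust $S$ since $N$ is square free), I would apply Proposition \ref{prop:Atkin-Lehner} to write $\Delta_N|W_Q$ as a nonzero constant times a product $\prod_{t\mid N}\Delta(\frac{t}{t_0}\frac{Q}{t_0}z)^{a_N(t)}$, whose leading $q$-expansion at $P_\infty$ is constant and nonvanishing by the second computation in the proof of that proposition. Applying the same Petersson expansion trick in a local parameter $q_Q$ at $P_{Q/N}$, and using that under $W_Q$ the factor $\Im(W_Q z)$ differs from $y$ only by $|cz+d|^{-2}$ (a smooth positive factor near the cusp), one obtains an expansion
$\|\Delta_N|W_Q(z)\|=(-\log|q_Q|^2)^{k/2}|q_Q|^{\ord_{P_{Q/N}}\Delta_N}\,\tilde\varphi(q_Q)$
with $\tilde\varphi$ smooth and positive. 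This shows that the $\alpha$-index is uniformly $k/2$ at every cusp, establishing part (1).

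Part (2) follows by unpacking the singularity profile. Taking $-\log\|\Delta_N(z)\|^2$ at $P_\infty$ gives
$$
-\log\|\Delta_N\|^2 = -\tfrac{rk}{6}\log|q_z| - k\log(-\log|q_z|^2) - 2\log\varphi(q_z),
$$
which matches exactly the local shape required in Section \ref{sect:ArithIntersectionReview} for an arithmetic divisor with log-log singularities, with parameters $\alpha_{P_\infty}=k/2$, $\beta_{P_\infty}=\ord_{P_\infty}\Delta_N$, and smooth $\psi_{P_\infty}=\log\varphi$. The analogous expansion at each $P_{Q/N}$ comes from the computation in the previous paragraph, and matching parameters with those of the metrized line bundle $\widehat\omega_N^k$ identifies $\widehat{\Div}(\Delta_N)$ as an arithmetic divisor representing $\widehat\omega_N^k$. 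The second statement about $\Delta_N^0=\Delta_N|w_N$ follows by pulling the entire analysis back through the Fricke involution $w_N$, which permutes cusps and preserves the Petersson metric up to a smooth factor. The only point demanding real care is the uniformity $\alpha_P=k/2$ across all cusps; this is not deep but requires tracking the widths $\kappa_{P_{Q/N}}$ and the Jacobian $|cz+d|^{-2}$ through the Atkin--Lehner transformations so that both contribute only to the smooth part $\tilde\varphi$ and not to the log-log exponent.
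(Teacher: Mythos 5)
Your proposal is correct and is essentially the computation the paper has in mind: the paper simply declares this lemma ``clear,'' since it follows from the definition (\ref{eq:Petersson}) of the normalized Petersson metric (via $4\pi y=-\log|q_z|^2$) together with Propositions \ref{prop: DeltaN} and \ref{prop:Atkin-Lehner}, which is precisely what you carry out. One small point of phrasing: the factor $|cz+d|^{-2}$ relating $\Im(W_Q z)$ to $y$ is \emph{not} itself smooth and bounded near the cusp; rather it cancels exactly against the automorphy factor of the weight-$k$ form, so that $\|\Delta_N(W_Q z)\|=\|(\Delta_N|W_Q)(z)\|$ and the analysis at $P_{Q/N}$ reduces to the expansion of $\Delta_N|W_Q$ at $\infty$ (the width of the cusp only changes $\varphi$ by a constant) --- your conclusion is unaffected.
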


We also consider the arithmetic divisor  on  $\mathcal X_0(N)$:
\begin{equation}
\widehat{\Delta}_N = (\frac{rk}{12} \mathcal P_\infty,  -\log \| \Delta_N(z) \|^2).
\end{equation}
One has
\begin{equation} \label{eq6.10}
\widehat{\Div}(\Delta_N)= \widehat{\Delta}_N  - k \sum_{p \mid N}\frac{p}{p-1} \mathcal{X}_{p}^{0}.
\end{equation}

Define
\begin{equation} \label{eq:ArithmeticDivisor}
\widehat{\mathcal Z}(n, \mu, v)
=\begin{cases}
 \widehat{\mathcal Z}(n, \mu, v)^{\Naive}  -2 \widehat{\omega}_N - \sum_{p|N} \mathcal X_p^0 -(0, \log(\frac{v}{N})) &\ff n=0, \mu=0,
 \\
 \widehat{\mathcal Z}(n, \mu, v)^{\Naive} &\hbox{ otherwise}.
 \end{cases}
\end{equation}
The arithmetic generating function ($q=e(\tau)$) in the introduction is defined to be
\begin{equation} \label{eq:KudlaGeneratingFunction}
\widehat{\phi}(\tau) = \sum_{\substack{ n \in \frac{1}{2N}\Z \\
                          \mu \in L^\sharp/L
                          \\
                          Q(\mu) \equiv n \pmod 1}}
                           \widehat{\mathcal Z}(n, \mu, v) q^n e_\mu  \in \widehat{\CH}^1_\R(\mathcal X_0(N), S)\otimes \C[L^\sharp/L][[q, q^{-1}]].
\end{equation}

Replacing $\widehat{\omega}_N$ by the class of arithmetic divisor $\frac{1}k \widehat{\Div}(\Delta_N)$, we can rewrite
$\widehat{\phi}(\tau) = (\phi(\tau),  \Xi_L(\tau, z))$ where
\begin{align}\label{genericinfinite}
\phi(\tau) &= \sum_{n, \mu}  \mathcal Z(n, \mu, v) q^n e_\mu  \quad,  \hbox{ and  }
\\
\Xi_L(\tau, z) &= (\Xi(0, 0, \mu) + \frac{2}k \log\|\Delta_N\|^2 - \log\frac{v}N )e_0 +\sum_{n\ne 0 , \mu} \Xi(n, \mu, v) q^n e_\mu. \notag
\end{align}

\begin{proposition} \label{prop:Chow} One has

$$
\widehat{\phi}(\tau) \in £¨\widehat{\CH}^1_\R(\mathcal X_0(N))\otimes \C[L^\sharp/L]£©[[q, q^{-1}]].
$$
\end{proposition}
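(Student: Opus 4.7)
The plan is to verify the proposition coefficient-by-coefficient, by showing that each $\widehat{\mathcal Z}(n,\mu,v)$ actually lies in $\widehat{\CH}^1_\R(\mathcal X_0(N))$, i.e., that its Green current has only log singularities (and those singularities match the multiplicities of the horizontal divisor at cusps), with no residual log-log terms. For every $(n,\mu)$ with $n\neq 0$ or with $n=0$ and $\mu\neq 0$, Theorem~\ref{theo:SpecialGreen} already shows $\Xi(n,\mu,v)$ is smooth on $Y_0(N)$ and, at each cusp $P_{1/M}$, either decays exponentially or has purely logarithmic singularity whose coefficient is exactly $-g(n,\mu,v)$, which is the multiplicity appearing in $\widehat{\mathcal Z}(n,\mu,v)^{\Naive}$ at that cusp. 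So those Fourier coefficients already belong to $\widehat{\CH}^1_\R(\mathcal X_0(N))$ without any modification.

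The only genuinely subtle case is $(n,\mu)=(0,0)$, where by Theorem~\ref{theo:SpecialGreen}(3) the Green function has the log-log term $-2\log(-\log|q_M|^2)$ at every cusp $P_{1/M}$, $M\mid N$. Here I would invoke Lemma~\ref{lem:Delta_N}(1): the metrized bundle $\widehat{\omega}_N^k=\widehat{\mathcal M}_k(N)$ has $\alpha$-index $k/2$ at \emph{every} cusp of $X_0(N)$ (i.e.\ the same log-log behavior, since the Petersson metric is globally defined via $y^k$), so the Green current associated to $-2\widehat{\omega}_N$ is $(2/k)\log\|\Delta_N\|^2$ and contributes $+2\log(-\log|q_M|^2)$ at every cusp. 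Adding this to $\Xi(0,0,v)$ cancels the log-log singularity uniformly at all cusps. The additive constant $-\log(v/N)$ is innocuous (it is smooth), and the extra $-\sum_{p\mid N}\mathcal X_p^0$ is purely vertical and does not affect Green-function singularities.

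Next I would check that the horizontal divisor orders on both sides now match the coefficients of $\log|q_M|^2$ in the Green function. At $P_\infty$, the log-coefficient of $\Xi(0,0,v)$ is $-g(0,0,v)$, while that of $(2/k)\log\|\Delta_N\|^2$ is $+r/6$ (coming from $\ord_{P_\infty}\Delta_N=rk/12$ via Lemma~\ref{divisorlemma}); meanwhile the divisor $g(0,0,v)\mathcal P_\infty-(r/6)\mathcal P_\infty$ appearing in $\widehat{\mathcal Z}(0,0,v)$ has the matching order. At every other cusp $P_{1/M}$ with $M>1$, $\Delta_N$ is non-vanishing by Proposition~\ref{prop:CuspBehavoir}, so $(2/k)\log\|\Delta_N\|^2$ contributes no log term there, while $\Xi(0,0,v)$ contributes $-g(0,0,v)\log|q_M|^2$, consistent with the multiplicity $g(0,0,v)$ of $\mathcal P_{1/M}$ in the horizontal part of $\widehat{\mathcal Z}(0,0,v)$. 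Once these compatibilities are checked, the Green current has only log singularities matching a well-defined divisor on $\mathcal X_0(N)$, hence defines an element of $\widehat{\CH}^1_\R(\mathcal X_0(N))$.

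The main technical point, and the reason the definition of $\widehat{\mathcal Z}(0,0,v)$ has to be modified, is precisely this log-log cancellation; it works only because the Petersson $\alpha$-index is $k/2$ at \emph{every} cusp, matching the constant log-log coefficient $-2$ that Theorem~\ref{theo:SpecialGreen}(3) produces independently of $M$. The rest of the verification is a book-keeping comparison of log coefficients versus horizontal divisor orders using Lemma~\ref{divisorlemma}, which goes through cleanly since $N$ is square free.
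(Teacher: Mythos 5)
Your proposal is correct and follows essentially the same route as the paper: reduce to the coefficient $(n,\mu)=(0,0)$ via Theorem~\ref{theo:SpecialGreen}, and observe that the log-log term $-2\log(-\log|q_M|^2)$ of $\Xi(0,0,v)$ at each cusp is cancelled by the $+2\log(-\log|\cdot|^2)$ coming from the Petersson factor $y^{k}$ in $-2\widehat{\omega}_N$, which is present with the same coefficient at every cusp because it depends only on the weight. The only (immaterial) difference is the choice of rational section used to make $-2\widehat{\omega}_N$ explicit: the paper uses the classical $\Delta$ as a section of $\omega_N^{12}$, whereas you use $\Delta_N$ via Lemma~\ref{lem:Delta_N} and Proposition~\ref{prop:CuspBehavoir}, which obliges you to (correctly) separate the cusp $P_\infty$ from the cusps where $\Delta_N$ does not vanish when matching log coefficients against horizontal multiplicities.
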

\begin{proof} By Theorem \ref{theo:SpecialGreen}, it suffices to check the case for $\widehat{\mathcal Z}(0,0, v)$.
Notice that $\Delta(\tau)$ is a section of $\omega_N^{12}$. So we have by Theorem \ref{theo:SpecialGreen}
$$
\widehat{\mathcal Z}(0,0, v)=(\mathcal Z, g)
$$
with
\begin{align*}
\mathcal Z&= -\frac{\sqrt N}{2 \pi \sqrt v} \sum_{M|N} \mathcal P_{\frac{1}M} -\frac{1}6 \Div \Delta  -\sum_{p|N} \mathcal X_p^0,
\\
g  &= \Xi(0, 0, v) + \frac{1}6 \log\|\Delta\|^2 -\log\frac{v}N.
\end{align*}
For each $M|N$, choose $\sigma_M \in \SL_2(\Z)$ such that $\sigma_M(\infty) =\frac{1}M$. Then Theorem \ref{theo:SpecialGreen}(3) asserts
\begin{align*}
\Xi(0,0, v)(\sigma_M(z)) &=- \frac{\sqrt N}{2 \pi \sqrt v} (\log|q_M|^2)-2 \log(-\log|q_M|^2) + \hbox{smooth}
\\
 &= - \frac{\sqrt N}{2 \pi \sqrt v} (\log|q|^2)-2 \log(-\log|q|^2) + \hbox{smooth},
\end{align*}
where $q_M= q^{\frac{M}N}$, as the width of the cusp $P_{\frac{N}M}$ is $\frac{N}M$. On the other hand,
$$
\log\|\Delta(\sigma_M(z))\|^2 =\log\|\Delta(z)\|^2=\log (|q|^2) + 12 \log (-\log|q|^2) +\hbox{smooth}.
$$
So we know
$$
g(\sigma_M(z)) = (- \frac{\sqrt N}{2 \pi \sqrt v} +\frac{1}6) \log (|q|^2) + \hbox{smooth}
$$
has just log singularity.
\end{proof}

We first record the following proposition, which is clear by (\ref{eq:VertInfiniteIntersection}) and Corollary  \ref{cor:Value}.

\begin{proposition} \label{prop:value} Let the notation be as above, then
\begin{eqnarray}
&&\deg \widehat{\phi}(\tau)=\sum_{n, \mu}\deg(\widehat{\mathcal Z}(n, \mu, v))  q^{n}e_{\mu} = \langle \widehat{\phi}(\tau),  a(2) \rangle  =\frac{2}{\varphi(N)}\mathcal E_L(\tau, 1).\nonumber
\end{eqnarray}
In general, for  $a(f) =(0, f) \in \widehat{\CH}_\R^1(\mathcal X_0(N), S)$, we have
$$
\langle \widehat{\phi}(\tau), a(f) \rangle  =\frac{1}2 \int_{X_0(N)} f(z) \Theta_L(\tau, z)= \frac{1}2 I(\tau, f)
$$
is  a vector valued modular form valued in $S_L$ for $\Gamma^{\prime}$ of weight $3/2$ and representation $\rho_L$.

\end{proposition}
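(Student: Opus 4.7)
The plan is to prove the general identity $\langle \widehat{\phi}(\tau), a(f)\rangle = \tfrac{1}{2} I(\tau, f)$ first, and then deduce the degree statement and the stated special case by setting $f$ constant and invoking Corollary \ref{cor:Value}. By bilinearity of the extended Gillet--Soulé height pairing (Proposition \ref{prop:Intersection}) and formula (\ref{eq:VertInfiniteIntersection}), for each Fourier coefficient $\widehat{\mathcal Z}(n, \mu, v) = (\mathcal Z_{n,\mu}, g_{n,\mu})$ we have
$$
\langle \widehat{\mathcal Z}(n, \mu, v), a(f)\rangle = \tfrac{1}{2}\int_{X_0(N)} f(z)\,\omega_{n,\mu},
$$
where $\omega_{n,\mu}$ is the Chern form appearing in $dd^{c}[g_{n,\mu}] + \delta_{\mathcal Z_{n,\mu}(\C)} = [\omega_{n,\mu}]$. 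So the whole argument reduces to identifying these $\omega_{n,\mu}$ and recognising their generating series as Kudla--Millson's $\Theta_L(\tau, z)$.

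For $(n, \mu) \neq (0,0)$ the identification is immediate from Corollary \ref{cor:GreenFunction} / Theorem \ref{theo:SpecialGreen}: one has $\omega_{n, \mu} = \omega(n, \mu, v)$, which is precisely the $q^n e_\mu$-coefficient of $\Theta_L(\tau, z)$ according to (\ref{eq:ScalarTheta}) and (\ref{eq:Differential}). The only coefficient requiring care is $(0,0)$, because of the modification in (\ref{eq:ArithmeticDivisor}). Here the vertical component $\sum_{p|N}\mathcal{X}_p^{0}$ contributes $0$ (its complex fibre is empty of horizontal support), the term $(0, \log(v/N))$ contributes $dd^{c}$ of a $z$-constant, also $0$, and the only nontrivial correction comes from $-2\widehat{\omega}_N$. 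A direct curvature computation for the normalised Petersson metric (\ref{eq:Petersson}) gives $c_{1}(\widehat{\omega}_N) = \tfrac{1}{4\pi}\mu(z)$, so the modification contributes $-\tfrac{1}{2\pi}\mu(z)$. Thus $\omega_{0,0} = \omega(0,0,v) - \tfrac{1}{2\pi}\mu(z)$, which matches the $q^0 e_0$-coefficient of $\theta_0$ by (\ref{eq:ScalarTheta}).

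With this identification, summation over $(n, \mu)$ yields $\sum_{n, \mu} \omega_{n, \mu} q^{n} e_{\mu} = \Theta_L(\tau, z)$, and hence
$$
\langle \widehat{\phi}(\tau), a(f)\rangle = \tfrac{1}{2}\int_{X_0(N)} f(z)\,\Theta_L(\tau, z) = \tfrac{1}{2} I(\tau, f).
$$
The modularity of $\langle \widehat{\phi}(\tau), a(f)\rangle$ in $\tau$ (weight $3/2$, representation $\rho_L$, for $\Gamma'$) is then inherited termwise from the known modular transformation law of $\Theta_L(\tau, z)$ recorded after (\ref{eq:ScalarTheta}); absolute convergence needed to interchange sum and integral is guaranteed by Funke's estimate (Proposition \ref{convergence}).

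Finally, taking $f \equiv 2$ and using (\ref{eq:Degree}) identifies $\deg \widehat{\phi}(\tau)$ with $\langle \widehat{\phi}(\tau), a(2)\rangle = I(\tau, 1)$, and Corollary \ref{cor:Value} gives $I(\tau, 1) = \tfrac{2}{\varphi(N)}\mathcal{E}_L(\tau, 1)$. The main (essentially the only) obstacle is the bookkeeping at the $(0,0)$-coefficient, where one must verify that the magic modification in (\ref{eq:ArithmeticDivisor}) exactly cancels the $-\tfrac{1}{2\pi}\mu(z)$ discrepancy between $\omega(0,0,v)$ and the constant Fourier coefficient of $\theta_0(\tau, z)$; everything else is formal manipulation of previously established results.
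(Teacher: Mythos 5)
Your proposal is correct and follows the same route the paper takes: the paper states that the proposition ``is clear by (\ref{eq:VertInfiniteIntersection}) and Corollary \ref{cor:Value}'', i.e.\ one applies the formula $\langle(\mathcal Z,g),a(f)\rangle=\frac12\int_X f\,\omega$ coefficient by coefficient and recognises the resulting forms as the Fourier coefficients of $\Theta_L(\tau,z)$. Your extra bookkeeping at the $(0,0)$-coefficient --- that $-2\widehat{\omega}_N$ contributes $-\frac{1}{2\pi}\mu(z)$, matching the constant term of $\theta_0$ in (\ref{eq:ScalarTheta}), while the vertical and purely archimedean corrections contribute nothing --- is exactly the verification the paper leaves implicit, and it is right.
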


\begin{proposition}  \label{prop:vertical} For every prime $p|N$, one has
$$
\langle \widehat{\phi}(\tau), \mathcal X_p^0 \rangle =\langle \widehat{\phi}(\tau), \mathcal X_p^{\infty} \rangle =\frac{1}{\varphi(N)}\mathcal E_L(\tau, 1) \log p.
$$
\end{proposition}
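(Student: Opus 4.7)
The plan is to express both intersection numbers jointly via the principal arithmetic divisor of $p$, then split the sum evenly using the Atkin--Lehner involution at $p$. First, since $\mathcal X_p^0$ and $\mathcal X_p^\infty$ are purely vertical cycles at $p$ with no archimedean component and trivial Green current, the star product term in the Gillet--Soul\'e pairing formula (Proposition \ref{prop:Intersection}) vanishes, so
$$\langle \widehat{\mathcal Z}(n,\mu,v), \mathcal X_p^\bullet \rangle = (\mathcal Z(n,\mu,v)^{\mathrm{Naive}} \cdot \mathcal X_p^\bullet)_{\mathrm{fin}} \log p.$$

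Next, I would use that the rational number $p$, viewed as a function on $\mathcal X_0(N)$, has principal arithmetic divisor $\widehat{\mathrm{div}}(p) = (\mathcal X_p^0 + \mathcal X_p^\infty, -2\log p)$, which is zero in $\widehat{\CH}^1_\R(\mathcal X_0(N))$. Pairing with $\widehat{\phi}(\tau)$ and applying (\ref{eq:VertInfiniteIntersection}) together with the degree formula from Proposition \ref{prop:value} yields
$$\langle \widehat{\phi}(\tau), \mathcal X_p^0 + \mathcal X_p^\infty \rangle = \log p \cdot \deg(\widehat{\phi}(\tau)) = \frac{2 \log p}{\varphi(N)} \mathcal E_L(\tau,1).$$

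To split this sum evenly, I would invoke the Atkin--Lehner involution $w_p$: since $N$ is square free and $p \| N$, this extends to an involution of $\mathcal X_0(N)$ over $\Z$ that interchanges $\mathcal X_p^0$ and $\mathcal X_p^\infty$. Its action by conjugation on $V$ preserves $L$ and induces an orthogonal permutation $\pi_p$ of $L^\sharp/L$ with the properties that (a) $\pi_p$ commutes with the Weil representation $\rho_L$ and fixes $e_{\mu_0}$, hence fixes $\mathcal E_L(\tau,s)$, and (b) $w_p^* \mathcal Z(n,\mu) = \mathcal Z(n, \pi_p \mu)$ on the generic fiber, extending to the integral models. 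Invariance of the arithmetic intersection under the isomorphism $w_p$ then gives $\langle \widehat{\mathcal Z}(n,\mu,v), \mathcal X_p^\infty \rangle = \langle \widehat{\mathcal Z}(n, \pi_p \mu, v), \mathcal X_p^0 \rangle$, hence as vector-valued functions
$$\langle \widehat{\phi}(\tau), \mathcal X_p^\infty \rangle = \pi_p^{-1} \langle \widehat{\phi}(\tau), \mathcal X_p^0 \rangle.$$
Combined with the $\pi_p$-invariance of $\mathcal E_L(\tau,1)$ and the sum formula above, this forces each intersection to equal $\frac{1}{\varphi(N)} \mathcal E_L(\tau,1) \log p$.

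The main obstacle will be the exceptional coefficient $(n,\mu) = (0,0)$: the modification in (\ref{eq:ArithmeticDivisor}) introduces the asymmetric correction $-\sum_{p \mid N}\mathcal X_p^0 - 2\widehat{\omega}_N - (0, \log(v/N))$, which breaks the naive $w_p$-symmetry at this single Fourier coefficient. Verifying that these corrections rearrange correctly under $w_p$ will require the explicit self- and mutual-intersection numbers of the special-fiber components $\langle \mathcal X_p^0, \mathcal X_p^\bullet \rangle$, together with $\langle \widehat{\omega}_N, \mathcal X_p^\bullet \rangle$ (the latter computable from Lemma \ref{divisorlemma} via the rational section $\Delta_N$), and confirming that the resulting numerical contributions combine consistently with the sum formula already established.
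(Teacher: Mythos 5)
Your first step — pairing $\widehat{\phi}(\tau)$ against the principal arithmetic divisor $\widehat{\Div}(p)=(\mathcal X_p^0+\mathcal X_p^\infty,-\log p^2)$ and using the degree formula of Proposition \ref{prop:value} to get
$\langle \widehat{\phi}(\tau), \mathcal X_p^0\rangle+\langle \widehat{\phi}(\tau), \mathcal X_p^\infty\rangle=\frac{2\log p}{\varphi(N)}\mathcal E_L(\tau,1)$ — is exactly what the paper does. The divergence, and the problem, is in how you split the sum. The paper uses the single Fricke involution $w_N$, whose induced action on $L^\sharp/L$ is $\mu\mapsto-\mu$; since $\Xi(n,\mu,v)=\Xi(n,-\mu,v)$ and $Z(n,\mu)=Z(n,-\mu)$, each coefficient $\widehat{\mathcal Z}(n,\mu,v)$ is literally $w_N$-invariant (including the exceptional one: $2\widehat{\omega}_N+\sum_q\mathcal X_q^0$ equals both $\frac2k\widehat{\Delta}_N-\sum_q\frac{q+1}{q-1}\mathcal X_q^0$ and $\frac2k\widehat{\Delta}_N^0-\sum_q\frac{q+1}{q-1}\mathcal X_q^\infty$ by Lemma \ref{divisorlemma}, and $w_N$ exchanges these two expressions). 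Since $w_N^*\mathcal X_p^0=\mathcal X_p^\infty$, the equality $\langle\widehat{\phi},\mathcal X_p^0\rangle=\langle\widehat{\phi},\mathcal X_p^\infty\rangle$ follows coefficient by coefficient with no further work.

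Your replacement of $w_N$ by the single-prime involution $w_p$ has a genuine gap at the ``this forces'' step. What you obtain is $X+\pi_p X=2E$ with $X=\langle\widehat{\phi},\mathcal X_p^0\rangle$, $E=\frac{\log p}{\varphi(N)}\mathcal E_L(\tau,1)$ and $\pi_p E=E$. Since $\pi_p$ is an involution, $1+\pi_p$ annihilates the $(-1)$-eigenspace of $\pi_p$ on $\C[L^\sharp/L]$, so this equation only determines the $\pi_p$-symmetric part of $X$; it does not force $X=E$. (For $N$ with at least two prime factors $\pi_p\mu\neq\pm\mu$ in general, so the individual coefficients are genuinely permuted rather than fixed, and nothing rules out a $(-1)$-eigencomponent a priori.) In addition, the obstacle you flag at $(n,\mu)=(0,0)$ is real and unresolved in your argument: the correction $-2\widehat{\omega}_N-\sum_{q\mid N}\mathcal X_q^0$ is not $w_p$-invariant ($w_p$ sends $\mathcal X_p^0$ to $\mathcal X_p^\infty$ while fixing the other terms), so even the relation $\langle\widehat{\phi},\mathcal X_p^\infty\rangle=\pi_p^{-1}\langle\widehat{\phi},\mathcal X_p^0\rangle$ fails for the constant term without the explicit intersection numbers you defer. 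Both issues disappear if you use $w_N$ as the paper does; I would recommend rewriting the second half of the argument along those lines.
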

\begin{proof}
Since
$$
R(w, w_N z) = R(w_N^{-1}\cdot w, z),
$$
and $w_N\cdot L_\mu = L_{-\mu}$, one has by definition
$$
w_N^* \Xi(n, \mu, v) = \Xi(n, -\mu, v) =\Xi(n, \mu, v).
$$
This  implies
$$w_N^* Z(n, \mu, v)^{\Naive}= Z(n, \mu, v)^{\Naive}$$
 on the generic fiber. Since the divisors $\mathcal Z(n, \mu, v)^{\Naive}$ are all horizontal (flat closure of $Z(n, \mu, v)^{\Naive}$), we have
 $$w_N^* \mathcal Z(n, \mu, v)^{\Naive}= \mathcal Z(n, \mu, v)^{\Naive}.$$
One has also $w_N^* \widehat{\Delta}_N = \widehat{\Delta}_N^0$ and  $ w_N^* \mathcal X_p^0= \mathcal X_p^\infty$. Direct calculation using Lemma \ref{divisorlemma} then shows
$$
 w_N^*  \widehat{\mathcal Z}(0, 0, v) = \widehat{\mathcal Z}(0, 0, v),
 $$
 and  so
$$
w_N^* (\widehat{\phi}(\tau)) = \widehat{\phi}(\tau).
$$
Since $w_N$ is an isomorphism, we have
\begin{align*}
&\langle \widehat{\phi}(\tau), \mathcal X_p^0 \rangle
 =\langle \widehat{\phi}(\tau), \mathcal X_p^\infty \rangle
 \\
 &=\frac{1}2 \langle\widehat{\phi}(\tau), \mathcal X_p \rangle
  =\frac{1}2 \langle \widehat{\phi}(\tau),(0, \log p^2)  \rangle
  \\
  &=\frac{1}2 \deg  \widehat{\phi}(\tau) \log p
  = \frac{1}{\varphi(N)}\mathcal E_L(\tau, 1) \log p .
\end{align*}
Here we have used the fact that the principal arithmetic divisor $\widehat{\Div}(p) = (\mathcal X_p, -\log p^2)$. This proves the proposition.
\end{proof}

{\bf Proof of Theorem \ref{maintheo}}:  Now Theorem \ref{maintheo} follows from  Propositions \ref{prop:value} and \ref{prop:vertical}, equation (\ref{eq6.10}), and the following theorem, which will be proved in next section.

\begin{theorem} \label{theo:horizontal} Let the notation be above. Then

$$
\langle \widehat{\phi}(\tau), \widehat{\Delta}_N \rangle_{GS} =  12 \mathcal E_L'(\tau, 1).
$$
\end{theorem}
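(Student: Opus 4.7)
\textbf{Proof proposal for Theorem \ref{theo:horizontal}.}

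The plan is to compute $\langle \widehat{\phi}(\tau), \widehat{\Delta}_N \rangle_{GS}$ directly from the extended height pairing in Proposition \ref{prop:Intersection}, writing $\widehat{\phi}(\tau) = (\mathcal Z_\phi(\tau), \Xi_L(\tau, z))$ via (\ref{genericinfinite}) and $\widehat{\Delta}_N = \bigl(\tfrac{rk}{12}\mathcal P_\infty, -\log\|\Delta_N\|^2\bigr)$. The pairing decomposes as the finite intersection $\bigl(\mathcal Z_\phi(\tau).\tfrac{rk}{12}\mathcal P_\infty\bigr)_{\mathrm{fin}}$ plus one-half of the star product of the Green currents $\Xi_L \ast (-\log\|\Delta_N\|^2)$. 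The driving identity is Theorem \ref{derivatives}, which states $I(\tau,\log\|\Delta_N\|) = -12\,\mathcal E_L'(\tau,1)$; the whole task is to show that the extra geometric and boundary contributions vanish, leaving exactly this Bruinier--Funke-type theta lift as the answer.

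First I would dispose of the finite part. On the generic fiber Heegner divisors are disjoint from cusps, so $\mathcal Z(n,\mu)\cdot \mathcal P_\infty$ is supported in fibers over primes $p\mid N$, and has no contribution through a horizontal CM cycle. The cuspidal components that appear in $\mathcal Z_\phi(\tau)$ when $D\geq 0$ is a square contribute intersections of $\mathcal P_{1/M}$ with $\mathcal P_\infty$; these are nonzero only at the mod-$p$ fibers for $p\mid N$ (where $\mathcal P_{1/M}$ and $\mathcal P_\infty$ meet according to whether $p\mid M$ or not), and they can be made explicit using Lemma \ref{divisorlemma}. The constant-term modification $-2\widehat{\omega}_N-\sum_{p\mid N}\mathcal X_p^0-(0,\log(v/N))$ built into $\widehat{\mathcal Z}(0,0,v)$ produces, in particular, a $-2\widehat{\omega}_N\cdot \tfrac{rk}{12}\mathcal P_\infty$ piece and vertical self-intersections $\mathcal X_p^0\cdot\mathcal P_\infty$, both of which I would compute via $k\widehat{\omega}_N=\widehat{\Div}(\Delta_N)+k\sum_p \tfrac{p}{p-1}\mathcal X_p^0$ and the standard intersection numbers of components of $\mathcal X_0(N)\bmod p$.

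Next I would evaluate the infinite part. Away from the intersection with $\mathcal P_\infty$, the curvature current of $\Xi_L(\tau,z)$ is precisely the Kudla--Millson theta kernel $\Theta_L(\tau,z)$, and by (\ref{eq:VertInfiniteIntersection}) together with Theorem \ref{derivatives},
\begin{equation*}
\int_{X_0(N)} \bigl(-\log\|\Delta_N\|^2\bigr)\,\Theta_L(\tau,z) \;=\; -2\,I(\tau,\log\|\Delta_N\|) \;=\; 24\,\mathcal E_L'(\tau,1).
\end{equation*}
Halving this gives the promised $12\,\mathcal E_L'(\tau,1)$, so it remains to show that all remaining pieces of the star product formula in Proposition \ref{prop:Intersection} cancel. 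These pieces are (i) the pointwise value $\Xi_L(\tau,\cdot)|_{\mathcal P_\infty}$ (made finite by removing the cuspidal order $\alpha_{\mathcal L_1,\infty}$ as in Theorem \ref{theo:SpecialGreen}), (ii) the regularizing $\log(-2\log\epsilon)$ correction, weighted by the $\alpha$-indices from Lemma \ref{lem:Delta_N} and Theorem \ref{theo:SpecialGreen}, and (iii) the constant $\psi_\infty$ coming from the smooth part of $\Xi_L$ at the cusp. These are precisely the quantities computed in Theorem \ref{theo:SpecialGreen}(2)--(3), involving $g(n,\mu,v)$, the Funke constant $\varepsilon_\infty=N$, and the cusp value of the normalized Petersson metric of $\Delta_N$ at $P_\infty$.

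The main obstacle, and the reason this theorem is labeled separately as a horizontal intersection theorem, will be orchestrating the cancellation of these boundary corrections together with the finite geometric intersections identified above. The Kronecker limit formula Theorem \ref{firstlimitformula} produces exactly the constant $\psi_\infty$ one needs, written in terms of $\log y$, $\log 4\pi - \gamma$, and a $q$-expansion; matching this against the scattering constant $C_0$ from Lemma \ref{scatteringconst} and against the constant term of $\Xi(0,0,v)$ at $P_\infty$ from Theorem \ref{theo:SpecialGreen}(3) should force the remaining terms to assemble into contributions already absorbed in the second geometric identity of Theorem \ref{maintheo}, and thus to cancel here. The checks for the components with $D$ a nonzero square (where $\Xi_L$ has log singularity at $P_\infty$) and for $D=0,\mu=0$ (where an additional $\log\log$ singularity appears) are the delicate ones, and this is where care is needed to ensure that the generating-function identity, rather than individual Fourier coefficients, falls out cleanly.
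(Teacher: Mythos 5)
Your overall strategy coincides with the paper's: both reduce the identity to the single theta-lift evaluation $I(\tau,\log\|\Delta_N\|)=-12\,\mathcal E_L'(\tau,1)$ of Theorem \ref{derivatives} by showing, Fourier coefficient by Fourier coefficient, that $\langle\widehat{\mathcal Z}(n,\mu,v),\widehat{\Delta}_N\rangle$ equals $-\int_{X_0(N)}\log\|\Delta_N\|\,\omega(n,\mu,v)$ (suitably modified at $(n,\mu)=(0,0)$), with the star product of Proposition \ref{prop:Intersection} supplying the main term and all boundary and finite contributions cancelling. You have also correctly identified every ingredient the cancellation uses: K\"uhn's limit formula, the scattering constant of Lemma \ref{scatteringconst}, the cusp constants $\psi_\infty$ from Theorem \ref{theo:SpecialGreen}, and the vertical intersection numbers.

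There is, however, a concrete gap at the step you describe as ``orchestrating the cancellation''. When $D\ge 0$ is a square the divisor underlying $\widehat{\mathcal Z}(n,\mu,v)^{\Naive}$ contains $\mathcal P_\infty$ with nonzero multiplicity, so it does \emph{not} intersect $\frac{rk}{12}\mathcal P_\infty$ properly, and Proposition \ref{prop:Intersection} cannot be applied as written; your item (i) silently assumes it can. The paper's device is to first remove the common component, setting $\widehat{\mathcal Z}_1=\widehat{\mathcal Z}(n,\mu,v)^{\Naive}-\frac{12\,g(n,\mu,v)}{rk}\widehat{\Delta}_N$, pair $\widehat{\mathcal Z}_1$ with $\widehat{\Delta}_N$ by the star product, and then add back $\frac{12\,g(n,\mu,v)}{rk}\langle\widehat{\Delta}_N,\widehat{\Delta}_N\rangle$, which must be computed separately (Lemma \ref{lem:SelfIntersection}, resting on \cite[Theorem 6.1]{Kuhn2} or equivalently on pairing $\widehat{\Div}(\Delta_N)$ against $\widehat{\Div}(\Delta_N^0)$). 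Without this self-intersection number the constants from K\"uhn's limit formula and the scattering constant do not cancel on their own; and at $(n,\mu)=(0,0)$ the cancellation further requires the exact value $\psi_{1,\infty}(0,0,v,0)=-\tfrac12\log(v/N)-(1-\tfrac{6}{r}g(0,0,v))C$ together with the modification terms $-2\widehat{\omega}_N-\sum_{p|N}\mathcal X_p^0-(0,\log(v/N))$ --- these are not ``absorbed into Theorem \ref{maintheo}'' but cancel directly against $\tfrac{2}{k}\langle\widehat{\Delta}_N,\widehat{\Delta}_N\rangle$ and $\tfrac{rk}{24}\log(v/N)$. These computations are the substance of the proof and are asserted rather than carried out in your proposal.
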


\section{The proof  of Theorem \ref{theo:horizontal} }  \label{sect:Proof}


\subsection{Some preparation}

\begin{lemma} Two different cusps of $X_0(N)$ reduce to two different cusps modulo $p$ for every prime number $p$. So $\langle \mathcal P_{\frac{1}{M_1}}, \mathcal P_{\frac{1}{M_2}} \rangle =0$ if $M_1 \not\equiv M_2 \pmod N$.
\end{lemma}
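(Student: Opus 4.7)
The plan is to deduce the vanishing of $\langle \mathcal P_{1/M_1}, \mathcal P_{1/M_2}\rangle$ from the stronger assertion that the two horizontal arithmetic sections $\mathcal P_{1/M_1}, \mathcal P_{1/M_2}\colon \Spec\Z\to \mathcal X_0(N)$ are globally disjoint. Granted this, the finite intersection multiplicity at every prime $p$ vanishes, and since both divisors are horizontal with trivial Green component, the pairing is zero.

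To prove disjointness it suffices to show that for each prime $p$ the reductions $\mathcal P_{1/M_i}\bmod p$ are distinct closed points of the geometric fiber $\mathcal X_0(N)\otimes\overline{\F_p}$. When $p\nmid N$ the stack $\mathcal X_0(N)$ is smooth over $\Z_{(p)}$ and the cuspidal subscheme is finite \'etale. Under the Deligne--Rapoport / Katz--Mazur description, the cusp $\mathcal P_{Q/N}$ parametrizes a standard N\'eron polygon (whose number of components is determined by $Q$) equipped with a specific $\Gamma_0(N)$-structure, so distinct divisors $M_i\mid N$ give non-isomorphic generalized elliptic curves. Their reductions thus remain distinct modulo~$p$.

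The main case is $p\mid N$; write $N=pN'$. As recalled immediately before the lemma, the special fiber $\mathcal X_0(N)\otimes\F_p$ decomposes into $\mathcal X_p^\infty\cup\mathcal X_p^0$ with each component isomorphic to $\mathcal X_0(N')\otimes\F_p$, and the reduction of $\mathcal P_{Q/N}$ sits in $\mathcal X_p^\infty$ (resp.\ $\mathcal X_p^0$) exactly when $p\nmid Q$ (resp.\ $p\mid Q$); moreover these reductions land on the smooth locus, never on the supersingular intersection points. Therefore, if exactly one of $M_1,M_2$ is divisible by $p$, the two reductions lie on different irreducible components and are disjoint. If instead both $M_i$ share the same $p$-divisibility, set $M_i'=M_i$ or $M_i'=M_i/p$ accordingly; the component isomorphism identifies $\mathcal P_{1/M_i}\bmod p$ with the cusp of $\mathcal X_0(N')$ labelled by $M_i'$. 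Since $N$ is squarefree and $M_1\neq M_2$, necessarily $M_1'\neq M_2'$, so induction on the number of prime factors of $N$ yields distinctness in the fiber.

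The main technical obstacle is the $p\mid N$ case, specifically the assertion that each cusp reduces to a smooth non-crossing point of the prescribed component; this is precisely the statement that the cuspidal subscheme of the Deligne--Rapoport model is finite \'etale over $\Z$ when $N$ is squarefree, and should be invoked rather than reproved. Once disjointness of the sections is established, $(\mathcal P_{1/M_1}\cdot\mathcal P_{1/M_2})_{\mathrm{fin}}=0$ is immediate and the second assertion of the lemma follows.
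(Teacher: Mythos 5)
Your proposal is correct and follows essentially the same route as the paper: reduce to primes $p\mid N$, separate cusps according to which component $\mathcal X_p^\infty$ or $\mathcal X_p^0$ of the special fiber they reduce into, and handle two cusps landing in the same component via the isomorphism of that component with $\mathcal X_0(N/p)\bmod p$. The only (immaterial) difference is in the last step: the paper concludes injectivity on a fixed component by counting cusps, whereas you identify the image cusp explicitly and induct on the number of prime factors of $N$.
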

\begin{proof} We only need to consider primes $p|N$. If $p$ divides exactly one of the $M_1$ and $M_2$, the two cusps landed in two different branches of $\mathcal X_p$ and thus do not coincide. When $p$ divides both of them,  their reductions $\bar{\mathcal P}_{\frac{1}{M_j}}$  both landed in $\mathcal X_p^0$. On the other hand, $\mathcal X_p^0 $ is isomorphic to the reduction of $\mathcal X_0(N/p)$, under which cusps correspond to cusps. Counting the number of cusps, we see that different cusps which landed in $\mathcal X_p^0$ are still different in the reduction. This proves the lemma.

\end{proof}

\begin{lemma} \label{lem:VerticalIntersection} One has for each $p|N$,
$$
\langle \mathcal X_p^\infty,  \mathcal X_p^0 \rangle
=-\langle \mathcal X_p^0,  \mathcal X_p^0  \rangle
=-\langle \mathcal X_p^\infty,  \mathcal X_p^\infty  \rangle  =\frac{r(p-1)}{12(p+1)}\log p.
$$
\end{lemma}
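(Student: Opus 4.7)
The plan is to separate the lemma into two distinct claims: the linear relations among the three pairings, and the evaluation of one of them. First I would exploit the fact that $\widehat{\Div}(p) = (\mathcal X_p, -2\log p)$ is a principal arithmetic divisor on $\mathcal X_0(N)$. Pairing it against any purely vertical cycle $(V,0)$ kills the archimedean contribution and forces the finite intersection to vanish: $(\mathcal X_p.V)_{fin} = 0$. Writing $\mathcal X_p = \mathcal X_p^0 + \mathcal X_p^\infty$ and taking $V$ equal to each component in turn immediately yields
\[
\langle \mathcal X_p^0,\mathcal X_p^0\rangle = -\langle \mathcal X_p^0,\mathcal X_p^\infty\rangle = \langle \mathcal X_p^\infty,\mathcal X_p^\infty\rangle,
\]
so the entire lemma reduces to the single evaluation of $\langle \mathcal X_p^0,\mathcal X_p^\infty\rangle$.

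For that evaluation, since $N$ is squarefree, the two components are each isomorphic to $\mathcal X_0(N/p)\bmod p$ and meet precisely along the supersingular locus. At each supersingular point $x$ of $\mathcal X_0(N/p)_{\overline{\F}_p}$, I would invoke the Katz--Mazur local model
\[
\widehat{\mathcal O}_{\mathcal X_0(N),x} \;\cong\; W(\overline{\F}_p)[[X,Y]]/(XY - p^{k_x}), \qquad k_x = |\Aut(x)|/2,
\]
in which $\mathcal X_p^0$ and $\mathcal X_p^\infty$ are cut out by $X=0$ and $Y=0$. The scheme-theoretic local intersection of the two components at $x$ has length $k_x$, so contributes $k_x \log p$; combined with the stack convention of weighting $x$ by $2/|\Aut(x)| = 1/k_x$, each supersingular point contributes exactly $\log p$ to the arithmetic intersection, independently of $x$.

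Summing, the problem reduces to computing the total mass $\sum_{x \text{ ss}} 2/|\Aut(x)|$ of the supersingular locus of $\mathcal X_0(N/p)_{\overline{\F}_p}$. This is the Eichler mass formula $\sum_E 1/|\Aut(E)| = (p-1)/24$ at level $1$, doubled by our convention and multiplied by the generic degree of the forgetful map $\mathcal X_0(N/p)\to\mathcal X_0(1)$, which equals $[\SL_2(\Z):\Gamma_0(N/p)] = r/(p+1)$ since $r = [\SL_2(\Z):\Gamma_0(N)] = (p+1)[\SL_2(\Z):\Gamma_0(N/p)]$ for squarefree $N$ with $p\mid N$. Combining gives
\[
\langle \mathcal X_p^0,\mathcal X_p^\infty\rangle \;=\; \frac{p-1}{12}\cdot\frac{r}{p+1}\cdot\log p \;=\; \frac{r(p-1)}{12(p+1)}\log p,
\]
as asserted.

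The main obstacle I expect is the uniform verification of the local identity $k_x = |\Aut(x)|/2$ at all supersingular points, especially in the presence of extra automorphisms coming from $j=0$ or $j=1728$ and the small primes $p\in\{2,3\}$: one must check that the Katz--Mazur thickness in the universal deformation ring is exactly half the order of the automorphism group, so that the stack weighting precisely cancels it and every supersingular point contributes the same $\log p$. Once this local input is granted, the remaining steps are purely a mass-formula bookkeeping.
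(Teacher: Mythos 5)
Your overall strategy coincides with the paper's: the linear relations among the three pairings follow from pairing the vertical components against the principal arithmetic divisor $\widehat{\Div}(p)=(\mathcal X_p,-\log p^2)$ (the archimedean term vanishes because a purely vertical cycle has zero associated $(1,1)$-form), and the evaluation of $\langle\mathcal X_p^0,\mathcal X_p^\infty\rangle$ reduces to the Eichler mass formula for the supersingular locus of $\mathcal X_0(N/p)\bmod p$ together with the index $[\SL_2(\Z):\Gamma_0(N/p)]=r/(p+1)$. That part matches the paper.

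The gap is in your local intersection bookkeeping, and it is not merely cosmetic. You take the local length from the \emph{coarse} model $W(\overline{\F}_p)[[X,Y]]/(XY-p^{k_x})$, obtaining $k_x\log p$, and then multiply by the stacky weight $2/|\Aut(x)|=1/k_x$, concluding that every supersingular point contributes exactly $\log p$. Taken literally this makes the intersection number $(\#\{\text{supersingular points}\})\cdot\log p$, i.e.\ the \emph{unweighted} count, which differs from $\frac{r(p-1)}{12(p+1)}\log p$ whenever a supersingular point has extra automorphisms (the discrepancy is precisely the $j=0,\,1728$ corrections $2^t(u/2+v/3)$ appearing in Mazur's appendix). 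Your next sentence then asserts that the sum equals the weighted mass $\sum_x 2/|\Aut(x)|$ --- the correct quantity, but one that contradicts the ``each point contributes $\log p$'' claim you just made. The resolution is that the thickening $XY=p^{k_x}$ lives on the coarse space, obtained by quotienting the regular semistable deformation ring $W(\overline{\F}_p)[[X,Y]]/(XY-p)$ by $\Aut(x)/\{\pm1\}$; on the stack $\mathcal X_0(N)$ itself the two components cross transversally, so the local length is $1$ and each point contributes $\frac{2}{|\Aut(x)|}\log p$. Summing these weights and using $\sum_E 1/|\Aut(E)|=(p-1)/24$ gives the stated value, exactly as in the paper's proof. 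So the coarse local model should be removed from the argument (it double-counts against the stacky weight), rather than trying to verify $k_x=|\Aut(x)|/2$ ``uniformly'' as your closing paragraph proposes.
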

\begin{proof} Recall that $\mathcal X_p^\infty$ and $\mathcal X_p^0$ are both isomorphic to the special file $\mathcal X_0(\frac{N}p)_p= \mathcal X_0(\frac{N}p) \pmod p$ and that they intersect properly exactly at all the supersingular points. So
\begin{align*}
\langle \mathcal X_p^\infty,  \mathcal X_p^0 \rangle
&= \sum_{\substack{x \in \mathcal X_0(\frac{N}p)_p(\bar{\mathbb F}_p) \\ \hbox{supersingular}}}   \frac{2}{\vert Aut(x)\vert}
\\
&=[\SL_2(\Z): \Gamma_0(\frac{N}p)] \sum_{\substack{x \in \mathcal X_0(1)_p(\bar{\mathbb F}_p) \\ \hbox{supersingular}}}   \frac{2}{\vert Aut(x)\vert}
\\
&=\frac{r}{p+1} \sum_{\substack{x \in \mathcal X_0(1)_p(\bar{\mathbb F}_p) \\ \hbox{supersingular}}}   \frac{2}{\vert Aut(x)\vert}.
\end{align*}
It is well-known (see for example \cite[corollary 12.4.6]{KM}) that
\begin{equation}\label{supersingularsum}
\frac{p-1}{24}=\sum_{j \in \overline{F}_{p}, E_{j}\hbox{supersingular}}\frac{1}{\vert Aut(E_{j})\vert}.
\end{equation}
So
$$
\langle \mathcal X_p^\infty,  \mathcal X_p^0 \rangle = \frac{r(p-1)}{12 (p+1)}.
$$
 On the other hand
 $$
 \langle \mathcal X_p^{\infty},  \mathcal X_p^{\infty}  \rangle+\langle \mathcal X_p^\infty,  \mathcal X_p^0 \rangle
 = \langle \mathcal X_p^\infty,  (0, \log p^2) \rangle=0.
 $$
So
$$
\langle \mathcal X_p^{\infty},  \mathcal X_p^{\infty}  \rangle=-\langle \mathcal X_p^\infty,  \mathcal X_p^0 \rangle .
$$
\end{proof}

\begin{lemma}  \label{lem:SelfIntersection} One has
\begin{enumerate}
\item
$$
\langle \widehat{\omega}_N, \widehat{\omega}_N \rangle= r(\frac{\zeta(-1)}{2}+\zeta^{\prime}(-1))+\frac{r}{12}C,
$$
where $C=\frac{\log(4\pi) + \gamma}2$ is the normalization constant  in (\ref{eq:Petersson})

\item

$$
\langle \widehat{\Delta}_N,  \widehat{\Delta}_N \rangle =k^{2} r(\frac{\zeta(-1)}{2}+\zeta^{\prime}(-1))+\frac{k^{2}rC}{12}+\frac{k^{2}r}{12}\sum_{p \mid N}\frac{p^{2}}{p^{2}-1}\log p.
$$

\end{enumerate}

\end{lemma}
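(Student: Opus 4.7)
The plan is to deduce both identities from the self-intersection of the Hodge bundle combined with the decompositions in Lemma \ref{divisorlemma} and (\ref{eq6.10}), and the vertical intersection numbers of Lemma \ref{lem:VerticalIntersection}. The key observation is that $\widehat{\Div}(\Delta_N)$ and $\widehat{\Div}(\Delta_N^{0})$ are both representatives of the Chern class $c_1(\widehat{\omega}_N^k)$, so
$$
\langle \widehat{\Div}(\Delta_N),\, \widehat{\Div}(\Delta_N^{0})\rangle \;=\; \langle \widehat{\Div}(\Delta_N),\, \widehat{\Div}(\Delta_N)\rangle \;=\; k^2 \langle \widehat{\omega}_N,\, \widehat{\omega}_N\rangle.
$$

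For part (1), I would compute $\langle \widehat{\Div}(\Delta_N),\widehat{\Div}(\Delta_N^{0})\rangle$ directly using the two formulas of Lemma \ref{divisorlemma}. Because the horizontal divisors $\mathcal{P}_\infty$ and $\mathcal{P}_0$ have disjoint reductions modulo every prime (by the opening lemma of this section), the horizontal--horizontal finite intersection is zero, and the horizontal--vertical finite intersections collapse cleanly to contributions of $\log p$ weighted by Lemma \ref{lem:VerticalIntersection}. The archimedean piece is a star product $(-\log\|\Delta_N\|^2) * (-\log\|\Delta_N^{0}\|^2)$ which, via the Kronecker limit formula of Theorem \ref{firstlimitformula} applied at both cusps $P_\infty$ and $P_0$, reduces to a linear combination of the scattering constants $C_{P_\infty P_\infty}, C_{P_0 P_0}, C_{P_\infty P_0}$ computed from Lemma \ref{scatteringconst} (together with the Atkin-Lehner symmetry $w_N^*\Delta_N = \Delta_N^{0}$ and the normalization constant $C = (\log 4\pi+\gamma)/2$). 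Assembling all pieces and solving gives
$k^2 \langle \widehat{\omega}_N,\widehat{\omega}_N\rangle = k^2 r(\zeta(-1)/2 + \zeta'(-1)) + \tfrac{k^2 rC}{12}$,
which is (1) after dividing by $k^2$.

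For part (2), I would take the shorter route. By (\ref{eq6.10}), write $\widehat{\Div}(\Delta_N) = \widehat{\Delta}_N + V$ with vertical part $V = -k\sum_{p|N}\frac{p}{p-1}\mathcal{X}_p^{0}$, and expand
$$
k^2 \langle \widehat{\omega}_N,\widehat{\omega}_N\rangle = \langle \widehat{\Delta}_N,\widehat{\Delta}_N\rangle + 2\langle \widehat{\Delta}_N, V\rangle + \langle V, V\rangle.
$$
The cross term $\langle \widehat{\Delta}_N, V\rangle$ vanishes: finitely, because $\mathcal{P}_\infty$ reduces into $\mathcal{X}_p^{\infty}$ (never $\mathcal{X}_p^{0}$) for $p \mid N$, and archimedeanly, because a vertical fiber component carries trivial Green function, making the star product zero by Proposition \ref{prop:Intersection}. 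The self-intersection of $V$ is computed term by term from Lemma \ref{lem:VerticalIntersection}, giving $\langle V, V\rangle = -\tfrac{k^2 r}{12}\sum_{p|N}\tfrac{p^2}{p^2-1}\log p$. Substituting the value of $\langle \widehat{\omega}_N,\widehat{\omega}_N\rangle$ from (1) yields (2).

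The main obstacle lies in part (1): the archimedean star product requires tracking the precise constants coming from the Petersson-versus-normalized metric discrepancy and from the Kronecker limit formula at both cusps, and verifying that the scattering-constant calculations of Lemma \ref{scatteringconst} combine with the vertical contributions of Lemma \ref{lem:VerticalIntersection} to eliminate all $\log p$ terms, leaving the clean expression $r(\zeta(-1)/2 + \zeta'(-1)) + rC/12$. Once (1) is in hand, (2) is essentially formal bookkeeping.
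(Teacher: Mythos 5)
Your part (2) is essentially the paper's own argument: expand $\widehat{\Div}(\Delta_N)=\widehat{\Delta}_N+V$ via (\ref{eq6.10}), observe that the cross term vanishes (finitely because $\mathcal P_\infty=\mathcal P_{\frac{1}{N}}$ reduces into $\mathcal X_p^\infty$ and never into $\mathcal X_p^0$, archimedeanly because the vertical components carry trivial Green functions), and evaluate $\langle V,V\rangle$ by Lemma \ref{lem:VerticalIntersection}; your value $\langle V,V\rangle=-\tfrac{k^2r}{12}\sum_{p|N}\tfrac{p^2}{p^2-1}\log p$ and the resulting formula both check out. Where you genuinely diverge is part (1). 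The paper does not compute $\langle\widehat{\Div}(\Delta_N),\widehat{\Div}(\Delta_N^0)\rangle$ at all: it simply cites K\"uhn's theorem \cite[Theorem 6.1]{Kuhn2} for the self-intersection of the Hodge bundle in the \emph{Petersson} metric, $\langle\widehat{\omega}_{\Pet},\widehat{\omega}_{\Pet}\rangle=r(\tfrac{\zeta(-1)}{2}+\zeta'(-1))$, and then adds the correction $2\langle\widehat{\omega}_{\Pet},(0,C)\rangle=C\deg\widehat{\omega}=\tfrac{r}{12}C$ coming from the renormalization (\ref{eq:Petersson}). Your route --- computing the pairing of the two explicit sections $\Delta_N$, $\Delta_N^0$ directly from Lemma \ref{divisorlemma}, the Kronecker limit formula and scattering constants --- is precisely the alternative the paper acknowledges in Remark \ref{rem:SelfIntersection} and leaves to the reader; it has the merit of making the lemma self-contained (it would in effect reprove K\"uhn's theorem from the construction of $\Delta_N$), but it is also where your writeup stops short of a proof: the decisive step ``assembling all pieces and solving'' is asserted, not carried out, and it requires the cross-cusp scattering constant $C_{P_\infty P_0}$, which Lemma \ref{scatteringconst} does not supply (only $C_{P_\infty P_\infty}$ is stated). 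So either import K\"uhn's theorem as the paper does, or be prepared to compute $C_{P_\infty P_0}$ from the second Kronecker limit formula in Theorem \ref{firstlimitformula} and verify explicitly that the $\log p$ contributions from the vertical parts of $\Div\Delta_N$ and $\Div\Delta_N^0$ cancel against those in the archimedean term.
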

\begin{proof}  Let $\widehat{\omega}_{N, \Pet}^k$ be the Hodge bundle with the Petersson  metric (via its isomorphism to $\mathcal M_k(N)$)
$$
\|f(z)\|_{\Pet} = |f(z)(4 \pi  y)^{\frac{k}2} | = \|f(z)\| e^{\frac{kC}2}.
$$
According to \cite[Theorem 6.1]{Kuhn2}, we have
\begin{equation}
\langle \widehat{\omega}_{\Pet},\widehat{\omega}_{\Pet}\rangle= r(\frac{\zeta(-1)}{2}+\zeta^{\prime}(-1)).
\end{equation}
So
\begin{align*}
\langle \widehat{\omega}_N,\widehat{\omega}_N\rangle
&=\langle \widehat{\omega}_{\Pet},\widehat{\omega}_{\Pet}\rangle + 2 \langle \widehat{\omega}_{\Pet}, (0, C) \rangle
\\
&=r(\frac{\zeta(-1)}{2}+\zeta^{\prime}(-1))+\deg(\widehat{\omega}_{\Pet})C\\
&=r(\frac{\zeta(-1)}{2}+\zeta^{\prime}(-1))+\frac{r}{12}C
\end{align*}
as claimed.

Next, one has
\begin{align*}
\langle \widehat{\Delta}_N,  \widehat{\Delta}_N \rangle
&= \langle \widehat{\Delta}_N ,  \widehat{\Div}(\Delta_N) \rangle + k\sum_{p|N}\frac{p}{p-1} \langle  \widehat{\Delta}_N,  \mathcal{X}_{p}^{0} \rangle
\\
&=\langle \widehat{\Div}(\Delta_N) ,  \widehat{\Div}(\Delta_N) \rangle + k \sum_{p|N}\frac{p}{p-1} \langle  \mathcal{X}_{p}^{0}, \widehat{\Div}(\Delta_N)  \rangle
\\
&= k^2 \langle \widehat{\omega}_N,\widehat{\omega}_N\rangle - k^2 \sum_{p|N}(\frac{p}{p-1})^2 \langle  \mathcal{X}_{p}^{0}, \mathcal X_p^0  \rangle
\\
&=k^{2} r(\frac{\zeta(-1)}{2}+\zeta^{\prime}(-1))+\frac{k^{2}rC}{12}+\frac{k^{2}r}{12}\sum_{p \mid N}\frac{p^{2}}{p^{2}-1}\log p,
\end{align*}
by Lemma \ref{lem:VerticalIntersection}
\end{proof}

\begin{remark}  \label{rem:SelfIntersection} We remark that Lemma \ref{lem:SelfIntersection} can be proved directly using our explicit description of sections of $\omega_N^k$ without using \cite[Theorem 6.1]{Kuhn2}. Indeed, one has
$$
\langle \widehat{\omega}_N^k , \widehat{\omega}_N^k \rangle =\langle \widehat{\Div}(\Delta_N), \widehat{\Div}(\Delta_N^0 )\rangle.
$$
Now direct calculation gives the lemma. We leave the detail to the reader.
\end{remark}

\subsection{Proof of Theorem  \ref{theo:horizontal}}

In this section, we prove Theorem \ref{theo:horizontal}, which  amounts to check term by term on their Fourier coefficients.

By Theorem \ref{derivatives} and (\ref{eq:ThetaCoefficient}), it suffices to prove
\begin{equation} \label{eq:MainIdentity}
\langle \widehat{\mathcal Z}(n, \mu, v), \widehat{\Delta}_N \rangle   =
\begin{cases}
 -\int_{X_0(N)} \log\|\Delta_N(z)\| \omega(n,\mu, v)  &\ff n \ne 0,
  \\
   - \int_{X_0(N)} \log\|\Delta_N(z)\| ( \omega(0, 0, v)-\frac{dxdy}{2\pi y^{2}})  &\ff n =0, \mu=0.
\end{cases}
\end{equation}

The case $n=0, \mu \ne 0$ is trivial as both sides are zero.

We divide the proof into three cases: $D$ is not a square, $D>0$ is a square, and $D=0$.

{\bf Case 1}:  We first assume that $D$ is not a square. In this case, $\mathcal Z(n, \mu, v)$ and $\mathcal P_\infty$ has no intersection at all. By Proposition \ref{prop:Intersection} and Theorem \ref{theo:SpecialGreen}, one has

$$
\langle \widehat{\mathcal Z}(n, \mu, v),  \widehat{\Delta}_N \rangle
=-\int_{X_0(N)} \log\|\Delta_N\| \omega(n, \mu, v).
$$
This proves the case that $D$ is  not a square.

{\bf Case 2}:  Now we assume that $D$ is a square.  This case is complicated due to self-intersection at $\mathcal P_\infty$. We work out the case $D=0$ and leave the similar (and slightly easier) case $D>0$ to the reader.
Let
$$
\widehat{\mathcal Z}_{1}(0, 0, v)= \widehat{\mathcal Z}(0, 0, v)^{Naive}-\frac{12 g(0, 0, v)}{rk} \widehat{\Delta}_N
=(\mathcal Z_1(0, 0, v), \Xi_1(0, 0, v)).
$$
Then
\begin{eqnarray}\label{intercoeff}
\langle  \widehat{\mathcal Z}(0, 0, v)^{Naive},  \widehat{\Delta}_N \rangle=\langle \widehat{\mathcal Z}_{1}(0, 0, v),   \widehat{\Delta}_N \rangle +\frac{12 g(0, 0, v)}{rk} \langle  \widehat{\Delta}_N,   \widehat{\Delta}_N \rangle.\nonumber
\end{eqnarray}
 We have
\begin{eqnarray}\label{square}
&&\langle \widehat{\mathcal Z}_{1}(0, 0, v),   \widehat{\Delta}_N \rangle\nonumber
\\
&&=\sum_{0<M|N, M<N}\frac{rk}{12}  g(n, \mu, v) \langle \mathcal P_{\frac{1}M}, \mathcal P_\infty \rangle
  +\frac{rk}{12} ( \alpha_{\mathcal Z_1, P_\infty} - \psi_{1,\infty}(0, 0, v, 0))
\nonumber\\
 &&\qquad - \lim_{\epsilon \rightarrow 0} \left( \frac{rk}{12} \alpha_{\mathcal Z_1, P_\infty} \log(- \log \epsilon^{2}) - \frac{1}2\int_{X_0(N)_\epsilon} -\log\| \Delta_N \|^2 \omega_1 \right),\nonumber
\end{eqnarray}
where
$$
\omega_1 =\omega(0, 0, v) - \frac{12g(0, 0, v)}r  \frac{dx dy}{4 \pi y^2}
$$
and
$$
\alpha_{\mathcal Z_1, P_\infty}=1-\frac{6}r g(0, 0, v).
$$
So the limit is equal to
\begin{align*}
&\frac{rk}{12}\alpha_{\mathcal Z_1, P_\infty}\lim_{ \epsilon \rightarrow 0} \left( \log(-\log \epsilon^2) + \frac{12}{r k} \int_{X_0(N)_\epsilon} \log\| \Delta_N\|^2 \frac{dx dy}{4 \pi y^2}  \right)
\\
&+ \lim_{\epsilon\rightarrow 0} \int_{X_0(N)_{\epsilon}} \log\| \Delta_N\| (\omega(0, 0, v)-\frac{dxdy}{2\pi y^{2}})
\\
 &=\frac{rk}{12}\alpha_{\mathcal Z_1, P_\infty}\lim_{ \epsilon \rightarrow 0} \left( \log(-\log \epsilon^2) + \frac{12}{r k} \int_{X_0(N)_\epsilon} \log\| \Delta_N\|^2 \frac{dx dy}{4 \pi y^2}  \right)
 \\
 &+ \int_{X_0(N)} \log\| \Delta_N\| (\omega(0,0, v)-\frac{dxdy}{2\pi y^{2}}).
\end{align*}
Recall (\cite[Lemma 2.8]{Kuhn} that
\begin{equation}\label{kuhnlimit}
\lim_{\epsilon \rightarrow 0} \left( \log(-\log \epsilon^2) + \frac{12}{r k} \int_{X_0(N)_\epsilon} \log\| \Delta_N\|^2 \frac{dx dy}{4 \pi y^2}\right) =\frac{r \pi }{3} C_{0}+ 2 \log (4 \pi) -C.
\end{equation}
Here $C_0$  is the scattering constant given in  Lemma  \ref{scatteringconst}, and $C$ is the normalization  constant in Petersson norm.
Combining this with  Corollary \ref{scatteringconst}, we obtain

\begin{align*}
&\langle \widehat{\mathcal Z}_{1}(0, 0, v),   \widehat{\Delta}_N \rangle
\\
&= \frac{rk}{12}(1-\frac{6}{r}g(0, 0, v) )\bigg(12\zeta(-1) +24\zeta'(-1)
 +C +2\sum_{p|N} \frac{p^2}{p^2-1} \log p\bigg)
\\
&-\frac{rk}{12} \psi_{1,\infty}(0, 0, v, 0)-\int_{X_0(N)} \log\| \Delta_N\| (\omega(0,0, v)-\frac{dxdy}{2\pi y^{2}}).
\end{align*}

Here we recall $\zeta(-1) =-\frac{1}{12}$. On  the other hand,
Theorem \ref{theo:Singularity} implies
\begin{align*}
\psi_{1,\infty}(0, 0, v, 0) &= \lim_{y\rightarrow \infty}(\psi_\infty(0, 0, v, q_z)- \frac{12}{rk} g(0, 0, v)\log\phi(q_z))
\\
&=-\frac{1}{2}\log(\frac{v}{N})-(1-\frac{6}{r}g(0, 0, v))C.
\end{align*}
Therefore, one has by Lemma \ref{lem:SelfIntersection}
 \begin{eqnarray}
&&\langle  \widehat{\mathcal Z}(0, 0, v)^{Naive},  \widehat{\Delta}_N \rangle  \nonumber\\
&=&\langle \widehat{\mathcal{Z}_{1}}(0, 0, v),   \widehat{\Delta}_N \rangle +\frac{12 g(0, 0, v)}{rk} \langle  \widehat{\Delta}_N,   \widehat{\Delta}_N \rangle \nonumber\\
&=&\frac{rk}{24}\log(\frac{v}{N}) +\frac{2}{k}\langle  \widehat{\Delta}_N,   \widehat{\Delta}_N\rangle
-\int_{X_0(N)} \log\| \Delta_N\| (\omega(0,0, v)-\frac{dxdy}{2\pi y^{2}}),\nonumber
\end{eqnarray}

and
\begin{align*}
\langle  \widehat{\mathcal Z}(0, 0, v),  \widehat{\Delta}_N \rangle
&= \langle  \widehat{\mathcal Z}(0, 0, v)^{\Naive},  \widehat{\Delta}_N \rangle -\frac{2}{k} \langle \widehat{\Delta}_N,  \widehat{\Delta}_N\rangle
 \\
 &\quad  -\sum_{p|N} \frac{p+1}{p-1} \langle \mathcal X_p^0, \widehat{\Delta}_N\rangle -\langle (0, \log (\frac{v}{N})),\widehat{\Delta}_N\rangle
  \\
  &=-\int_{X_0(N)} \log\| \Delta_N\| (\omega(0,0, v)-\frac{dxdy}{2\pi y^{2}}).
\end{align*}

This proves the case $D=0$.

\section{Modularity of the arithmetic theta function}\label{sec:modularlarity}

 In this section, we will prove the modularity of $\widehat{\phi}(\tau)$. To simplify the notation, we denote in this section  $X=X_0(N)$ and $\mathcal X=\mathcal X_0(N)$, and let  $S$ be   the set of cusps of $X$.  Let $g_{\GS}$ be a Gillet-Soul\'e Green function for the divisor $\Div \Delta_N$(without log-log singularity), and let $\widehat{\Delta}_{\GS} =(\Div \Delta_N, g_{\GS}) \in \widehat{\CH}_\R^1(\mathcal X)$, and
$
f_N= g_{\GS} + \log\|\Delta_N\|^2$. Then $ a(f_N) =(0, f_N) \in  \widehat{\CH}_\R^1(\mathcal X, S)$ and
$$
\widehat{\Delta}_{\GS}= \widehat{\Div}({\Delta}_N) + a(f_N).
$$
 Theorem \ref{theo:horizontal}  and Proposition \ref{prop:value}   imply the following proposition immediately.

\begin{proposition}  \label{prop8.1} The Gillet-Soul\'e height pairing
$\langle \widehat{\phi}, \widehat{\Delta}_{\GS} \rangle $ is a vector valued modular form of $\Gamma'$  valued in  $\C[L^\sharp/L]$ of weight $3/2$ and representation $\rho_L$.
\end{proposition}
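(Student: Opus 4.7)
The plan is to decompose $\widehat{\Delta}_{\GS}$ into pieces whose pairing against $\widehat{\phi}$ has already been identified with a modular form in Theorem \ref{maintheo} and Theorem \ref{theo:horizontal}, plus a remaining ``smooth'' contribution that I will handle via a direct modularity argument for the theta integral.

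First I would use the identity (\ref{eq6.10}) to write
\begin{equation*}
\widehat{\Delta}_{\GS} = \widehat{\Div}(\Delta_N) + a(f_N) = \widehat{\Delta}_N - k\sum_{p\mid N}\frac{p}{p-1}\mathcal{X}_p^0 + a(f_N),
\end{equation*}
where $f_N = g_{\GS} + \log\|\Delta_N\|^2$ is the difference of two Green functions for the same divisor $\Div(\Delta_N)$. Crucially, at each point of $\Div(\Delta_N)$ the log-singularities of $g_{\GS}$ and of $-\log\|\Delta_N\|^2$ cancel (up to a smooth difference), so $f_N$ is smooth on $Y_0(N)\setminus S$ and retains only the log-log singularity of $\log\|\Delta_N\|^2$ at the cusps $S$. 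Then by bilinearity of the Gillet-Soul\'e pairing on $\widehat{\CH}^1_\R(\mathcal{X}, S)$,
\begin{equation*}
\langle \widehat{\phi}(\tau), \widehat{\Delta}_{\GS}\rangle = \langle \widehat{\phi}(\tau), \widehat{\Delta}_N\rangle - k\sum_{p\mid N}\frac{p}{p-1}\langle \widehat{\phi}(\tau), \mathcal{X}_p^0\rangle + \langle \widehat{\phi}(\tau), a(f_N)\rangle.
\end{equation*}

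Next I would invoke the results proved earlier to identify each term as an explicit vector-valued modular form of weight $3/2$ for $(\Gamma', \rho_L)$. By Theorem \ref{theo:horizontal}, the first term equals $12\mathcal{E}_L'(\tau, 1)$. By Proposition \ref{prop:vertical}, each $\langle \widehat{\phi}(\tau), \mathcal{X}_p^0\rangle$ equals $\frac{1}{\varphi(N)}\mathcal{E}_L(\tau, 1)\log p$. It remains to show that $\langle \widehat{\phi}(\tau), a(f_N)\rangle$ is a modular form. For a smooth test function, Proposition \ref{prop:value} gives $\langle \widehat{\phi}(\tau), a(f)\rangle = \frac{1}{2} I(\tau, f)$, which is modular because $\Theta_L(\tau, z)$ has weight $3/2$ and representation $\rho_L$ in $\tau$. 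I would extend this identity to $f_N$ by observing that Proposition \ref{convergence} guarantees that $\Theta_L(\tau, \sigma_\ell z) = O(e^{-Cy^2})$ at every cusp, so the integral
\begin{equation*}
I(\tau, f_N) = \int_{X_0(N)} f_N(z)\, \Theta_L(\tau, z)
\end{equation*}
converges absolutely despite the log-log singularity of $f_N$ at $S$, and the standard unfolding/weight-$3/2$ transformation of $\Theta_L$ carries through unchanged.

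Assembling the three pieces yields the explicit formula
\begin{equation*}
\langle \widehat{\phi}(\tau), \widehat{\Delta}_{\GS}\rangle = 12\,\mathcal{E}_L'(\tau, 1) - \frac{k}{\varphi(N)}\sum_{p\mid N}\frac{p}{p-1}\mathcal{E}_L(\tau, 1)\log p + \tfrac{1}{2}I(\tau, f_N),
\end{equation*}
which is a sum of modular forms, hence modular. The main subtlety I anticipate is the justification that $\langle \widehat{\phi}(\tau), a(f_N)\rangle$ is given by the integral $\frac{1}{2}I(\tau, f_N)$ in the log-log setting of K\"uhn's arithmetic intersection theory (Proposition \ref{prop:Intersection}); this requires verifying that the defining star-product formula reduces to the naive integral when one of the divisors is $a(f_N)$ with $f_N$ of pure log-log type and the other generating series $\widehat{\phi}(\tau)$ has the behavior at cusps described by Theorem \ref{theo:SpecialGreen}. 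Everything else is bookkeeping with results already in hand.
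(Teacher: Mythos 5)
Your proof is correct and follows essentially the same route as the paper: the paper deduces the proposition ``immediately'' from Theorem \ref{theo:horizontal} and Proposition \ref{prop:value} via exactly your decomposition $\widehat{\Delta}_{\GS}=\widehat{\Div}(\Delta_N)+a(f_N)$ together with (\ref{eq6.10}). The only difference is that you spell out the convergence of $I(\tau,f_N)$ at the cusps, a point the paper leaves implicit in the statement of Proposition \ref{prop:value}.
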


Now we are ready to prove Theorem \ref{theo:modularity} following the idea in   \cite[Chapter 4]{KRYBook} with $\widehat{\omega}$  replaced by $\widehat{\Delta}_{\GS}$.  Let $\mu_{GS} =c_1(\widehat{\Delta}_{\GS})$, $A(X)$ be the space of smooth  functions $f$ on $X$ which are conjugation invariant ($Frob_\infty$-invariant), and let   $A^0(X)$ be the subspace of functions $f \in A(X)$ with
$$
\int_{X} f \mu_{GS}=0.
$$
 For each $p|N$, let
$\mathcal Y_p =\mathcal X_p^\infty- p \mathcal X_p^0$, then $\langle \mathcal Y_p, \widehat{\Delta}_{\GS} \rangle =0$. Let
$\mathcal Y_p^\vee =\frac{1}{\langle \mathcal Y_p, \mathcal Y_p\rangle} \mathcal Y_p$. Finally let $\widetilde{\MW}$ be the orthogonal complement of
$\R \widehat{\Delta}_{\GS}  +\sum_{p|N} \R \mathcal Y_p^\vee + \R a(1) + a(A^0(X))$ in $\widehat{\CH}_\R^1(\mathcal X)$. Then one has

\begin{proposition} \label{prop8.2} (\cite[Propositions 4.1.2, 4.1.4]{KRYBook})
$$
\widehat{\CH}_\R^1(\mathcal X) = \widetilde{\MW}\oplus (\R \widehat{\Delta}_{\GS}  +\sum_{p|N} \R \mathcal Y_p^\vee  + \R a(1)) \oplus a(A^0(X)).
$$
More precisely, every $\widehat{Z} =(\mathcal Z, g_Z)$ decomposes into
$$
\widehat{Z}=\widetilde{Z}_{MW} + \frac{\deg \widehat{Z}}{\deg \widehat{\Delta}_{\GS}} \widehat{\Delta}_{\GS} + \sum_{p|N} \langle \widehat{Z}, \mathcal Y_p\rangle \mathcal Y_p^\vee + 2 \kappa(\widehat{Z}) a(1) + a(f_{\widehat Z})
$$
for some $f_{\widehat Z} \in A^0(X)$, where
$$
 \kappa(\widehat{Z})\deg \widehat{\Delta}_{\GS} =\langle \widehat{Z}, \widehat{\Delta}_{\GS} \rangle
 -\frac{\deg \widehat{Z}}{\deg \widehat{\Delta}_{\GS}} \langle \widehat{\Delta}_{\GS}, \widehat{\Delta}_{\GS} \rangle.
$$
\end{proposition}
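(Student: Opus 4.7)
The plan is to follow the strategy of \cite[Propositions 4.1.2, 4.1.4]{KRYBook} adapted to $X_0(N)$, constructing the decomposition by orthogonal projection with respect to the extended height pairing of Proposition \ref{prop:Intersection}.

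First I would record the orthogonalities among the distinguished classes. Since $\mathcal Y_p$ is vertical and supported over $p$, its Chern form vanishes, so \eqref{eq:VertInfiniteIntersection} yields $\langle \mathcal Y_p, a(f)\rangle = 0$ for every $f\in A(X)$ and $\langle \mathcal Y_p,\mathcal Y_q\rangle = 0$ whenever $p\ne q$. By construction $\langle \mathcal Y_p, \widehat{\Delta}_{\GS}\rangle = 0$, while \eqref{eq:VertInfiniteIntersection} and \eqref{eq:Degree} give $\langle a(1),\widehat{\Delta}_{\GS}\rangle = \tfrac12\deg\widehat{\Delta}_{\GS}$, $\langle a(1), a(1)\rangle = 0$, and $\langle a(f),\widehat{\Delta}_{\GS}\rangle = \tfrac12\int_X f\,\mu_{\GS}$, which vanishes for $f\in A^0(X)$. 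Combined with Lemma \ref{lem:VerticalIntersection}, which gives $\langle \mathcal Y_p,\mathcal Y_p\rangle = -\tfrac{r(p-1)(p+1)}{12}\log p \ne 0$, the Gram matrix of $M:=\R\widehat{\Delta}_{\GS}\oplus\R a(1)\oplus\bigoplus_{p|N}\R\mathcal Y_p^\vee$ is block diagonal: a $2\times 2$ block of determinant $-\tfrac14(\deg\widehat{\Delta}_{\GS})^2\ne 0$ (note $\deg\widehat{\Delta}_{\GS}=rk/12$), and a diagonal block with entries $1/\langle\mathcal Y_p,\mathcal Y_p\rangle$. Hence the pairing restricted to $M$ is nondegenerate and $M\cap\widetilde{\MW}=\{0\}$.

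The coefficients in the stated decomposition are then forced by this nondegeneracy. Every summand on the right except $\alpha\widehat{\Delta}_{\GS}$ has degree zero, so taking degrees forces $\alpha=\deg\widehat Z/\deg\widehat{\Delta}_{\GS}$. Pairing with $\mathcal Y_q$ kills every other term and gives $\beta_q=\langle\widehat Z,\mathcal Y_q\rangle$. Pairing the remaining class with $\widehat{\Delta}_{\GS}$ then kills everything except the $a(1)$-contribution $2\kappa(\widehat Z)\cdot\tfrac12\deg\widehat{\Delta}_{\GS}$, yielding the displayed formula for $\kappa(\widehat Z)$.

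What remains, and what I expect to be the main obstacle, is separating the residual class
\[
\widehat Y := \widehat Z - \alpha\widehat{\Delta}_{\GS} - \sum_{p|N}\beta_p\mathcal Y_p^\vee - 2\kappa(\widehat Z) a(1)
\]
uniquely as $\widetilde Z_{MW} + a(f_{\widehat Z})$ with $\widetilde Z_{MW}\in\widetilde{\MW}$ and $f_{\widehat Z}\in A^0(X)$. By construction $\widehat Y$ lies in the kernel of $\deg(\cdot)$, of every $\langle\cdot,\mathcal Y_p\rangle$, and of $\langle\cdot,\widehat{\Delta}_{\GS}\rangle$, so only the archimedean separation is at issue. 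Choosing any representative $\widetilde Z_{MW}\in\widetilde{\MW}$ with the same underlying algebraic divisor class as $\widehat Y$ modulo the horizontal and vertical classes already accounted for in $M$ (existence of such a representative follows from the Mordell--Weil-style surjection onto $\widehat{\CH}^1_\R(\mathcal X)/M$ cut out by $\widetilde{\MW}$), the difference $\widehat Y - \widetilde Z_{MW}$ is of the form $a(g)$ with $g$ smooth on $X$, and the orthogonality $\langle\widehat Y,\widehat{\Delta}_{\GS}\rangle=0$ forces $\int_X g\,\mu_{\GS}=0$, i.e.\ $g\in A^0(X)$. This reproduces the analogue of \cite[Propositions 4.1.2, 4.1.4]{KRYBook} in our setting; the square-free hypothesis on $N$ enters only through Lemma \ref{lem:VerticalIntersection}, which is used to ensure the vertical block of the Gram matrix is nondegenerate.
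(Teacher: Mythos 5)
The paper does not actually prove this proposition; it is quoted verbatim from \cite[Propositions 4.1.2, 4.1.4]{KRYBook}, so there is no in-paper argument to compare against. Your reconstruction follows the same orthogonal-projection strategy as that reference, and the formal half of it is correct and complete: the Gram-matrix computation (including $\langle \mathcal Y_p,\mathcal Y_p\rangle=-\tfrac{r(p-1)(p+1)}{12}\log p$ and the off-diagonal $2\times 2$ block of determinant $-\tfrac14(\deg\widehat{\Delta}_{\GS})^2$ with $\deg\widehat{\Delta}_{\GS}=rk/12>0$) does pin down $\alpha$, $\beta_q$ and $\kappa(\widehat Z)$ exactly as in the statement.

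The gap is in the existence step, which you correctly identify as the main obstacle but then dispose of too quickly, and in a way that is circular: the ``Mordell--Weil-style surjection onto $\widehat{\CH}^1_\R(\mathcal X)/M$ cut out by $\widetilde{\MW}$'' is precisely the assertion being proved. The non-circular route is Proposition \ref{prop8.3}: the generic fiber $Y$ of $\widehat Y$ has degree $0$, hence gives a class in $\MW=J_0(N)\otimes\R$, and the explicit section $Z\mapsto\widetilde{\mathcal Z}$ there produces $\widetilde Z_{MW}\in\widetilde{\MW}$ with generic fiber $Y$. Even then, your claim that $\widehat Y-\widetilde Z_{MW}=a(g)$ with $g$ smooth is not immediate: modulo principal arithmetic divisors the difference is only of the form $V+a(g)$ with $V$ vertical. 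One must further note that for $p\nmid N$ the fiber is irreducible so $V_p$ is a multiple of $\mathcal X_p\equiv a(\log p^2)$, while for $p\mid N$ the pair $\{\mathcal X_p,\mathcal Y_p\}$ spans the same space as $\{\mathcal X_p^0,\mathcal X_p^\infty\}$ (change of basis of determinant $1+p\neq 0$); the $\mathcal Y_p$-components are then killed by the orthogonality $\langle\widehat Y-\widetilde Z_{MW},\mathcal Y_p\rangle=0$ already established, and the remaining constants are absorbed into $g$, after which $\langle\cdot,\widehat{\Delta}_{\GS}\rangle=0$ forces the adjusted $g$ to lie in $A^0(X)$ as you say. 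With these two repairs (and the easy check that $\widetilde{\MW}\cap a(A^0(X))=0$, needed for the sum to be direct), your argument is a correct proof.
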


\begin{proposition} \label{prop8.3} (\cite[ Remark 4.1.3]{KRYBook}) Let $\MW=J_0(N)\otimes_\Z \R$. Then $\widehat{\mathcal Z}=(\mathcal Z, g_Z)\mapsto  Z  \in \MW$ induces an isomorphism
$$
\widetilde{\MW} \cong \MW,
$$
where $Z$ is the generic fiber of $\mathcal Z$.
The inverse map  is given as follows. Given a rational divisor $Z \in J_0(N)$, let $g_Z$ be the unique harmonic Green function for $Z$ such that

\begin{align*}
d_z d_z^c g_Z -\delta_{Z}&=\frac{\deg(Z)}{\deg \widehat{\Delta}_{\GS}}\mu_{GS},
\\
\int_{X} g_Z\mu_{GS} &=0.
\end{align*}
Let $\mathcal Z$ be a divisor of $\mathcal X$ with rational coefficients such that its generic fiber is $Z$, and it is orthogonal to every irreducible components of  the closed fiber $\mathcal X_p$ for each prime  $p$.  Finally let
$$
\widetilde{\mathcal Z} = \widehat{\mathcal Z} -2 a(\langle \widehat{\Delta}_{\GS}, \widehat{\mathcal Z} \rangle), \quad \widehat{\mathcal Z} =(\mathcal Z, g_Z).
$$
Then the map $Z \mapsto \widetilde{\mathcal Z}$ is the inverse isomorphism.
\end{proposition}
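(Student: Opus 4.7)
The plan is to verify this is a well-defined bijection in three stages, following the strategy of [KRYBook, Remark 4.1.3] adapted to our setting where each special fiber $\mathcal X_p$ for $p \mid N$ has two irreducible components.

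First, I would show that the forward map $(\mathcal Z, g_Z) \mapsto Z$ lands in $\MW = J_0(N) \otimes_\Z \R$. Since any $\widehat Z \in \widetilde{\MW}$ is orthogonal to $a(1)$ by construction, the degree formula (\ref{eq:Degree}) forces the generic fiber $Z$ to have degree zero. Well-definedness modulo principal divisors holds because if $Z = \div(f)$ on the generic fiber, then the arithmetic principal divisor $\widehat{\div}(f)$ lies in the span of $\widehat{\Delta}_{\GS}$, the $\mathcal Y_p^\vee$, $a(1)$, and $a(A^0(X))$, hence its component in $\widetilde{\MW}$ under the decomposition of Proposition \ref{prop8.2} vanishes.

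Next, I would construct the inverse explicitly. For $Z \in J_0(N) \otimes \R$, the harmonic Green function $g_Z$ exists and is unique: existence follows from Hodge theory on the compact Riemann surface $X$ (the right hand side of $dd^c g - \delta_Z = -\tfrac{\deg Z}{\deg \widehat{\Delta}_{\GS}} \mu_{GS}$ integrates to zero since $\deg Z = 0$), and the normalization $\int_X g_Z \mu_{GS} = 0$ pins $g_Z$ down. I would then extend $Z$ to a horizontal divisor on $\mathcal X$ and adjust by a rational linear combination of $\mathcal X_p^0$ and $\mathcal X_p^\infty$, for each $p \mid N$, to secure orthogonality against every irreducible component of every closed fiber. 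Such an adjustment exists because, by Lemma \ref{lem:VerticalIntersection}, the intersection matrix on $\{\mathcal X_p^0, \mathcal X_p^\infty\}$ has a one-dimensional null space spanned by $\mathcal X_p = \mathcal X_p^0 + \mathcal X_p^\infty$ (corresponding to $\widehat{\div}(p) = (\mathcal X_p, -\log p^2)$), and the orthogonality constraint lives in the nondegenerate quotient. Finally, the subtraction $-2 a(\langle \widehat{\Delta}_{\GS}, \widehat{\mathcal Z}\rangle)$ (interpreted as adjustment by a constant-function Green term, normalized via (\ref{eq:VertInfiniteIntersection})) produces orthogonality to $\widehat{\Delta}_{\GS}$ without disturbing the orthogonality to the $\mathcal Y_p^\vee$ or $a(A^0(X))$ summands.

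To verify the two compositions are identities: the composition $\MW \to \widetilde{\MW} \to \MW$ is immediate, since taking the generic fiber of $\widetilde{\mathcal Z}$ returns $Z$ by construction. For $\widetilde{\MW} \to \MW \to \widetilde{\MW}$, given $\widehat Z = (\mathcal Z, g_Z) \in \widetilde{\MW}$, applying the inverse construction to its image $Z$ yields a second representative $\widetilde{\mathcal Z}'$. The difference $\widehat Z - \widetilde{\mathcal Z}'$ has trivial generic fiber, so it lies in $\sum_p \R \mathcal X_p^{0} + \sum_p \R \mathcal X_p^{\infty} + a(A(X))$; but it also lies in $\widetilde{\MW}$, which by Proposition \ref{prop8.2} has trivial intersection with that subspace, forcing the difference to vanish.

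The main technical obstacle will be the compatibility of the three adjustments---the harmonic normalization of $g_Z$, the orthogonalization of $\mathcal Z$ against vertical components for $p \mid N$, and the final correction $-2a(\langle \widehat{\Delta}_{\GS}, \widehat{\mathcal Z}\rangle)$. One must check that each step preserves what the previous step achieved: the vertical adjustment should not alter orthogonality to $a(A^0(X))$, and the final constant-function correction must be orthogonal to the $\mathcal Y_p^\vee$ (which follows from $\langle a(1), \mathcal Y_p\rangle = 0$, itself a consequence of $\deg \mathcal Y_p = 0$ on the generic fiber). It is precisely here that the harmonic choice of $g_Z$, rather than an arbitrary Green function, is essential, because only the harmonic representative is uniquely pinned down by orthogonality to $\mu_{GS}$-eigenfunctions.
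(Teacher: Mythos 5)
The paper offers no proof of this proposition at all: it is stated with the bare citation \cite[Remark 4.1.3]{KRYBook}, so there is nothing in the text to compare your argument against line by line. Your reconstruction is the standard argument from that reference, correctly adapted to the present situation: degree zero of the generic fiber from orthogonality to $a(1)$ via (\ref{eq:Degree}); injectivity because an element of $\widetilde{\MW}$ whose generic fiber is principal differs from $\widehat{\Div}(f)$ by vertical classes and an $a(g)$ with $g$ smooth, hence lies (modulo $\widehat{\Div}(p)$) in $\sum_p \R\mathcal Y_p^\vee + \R a(1) + a(A^0(X))$ and must vanish by the directness of the decomposition in Proposition \ref{prop8.2}; surjectivity via the explicit inverse. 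The point that is genuinely new relative to the level-one case---that the vertical orthogonalization at $p\mid N$ is solvable because $\langle\widehat{\mathcal Z},\mathcal X_p^0\rangle + \langle\widehat{\mathcal Z},\mathcal X_p^\infty\rangle = \log p\cdot\deg\widehat{\mathcal Z}=0$ while the intersection matrix of Lemma \ref{lem:VerticalIntersection} has rank one with kernel $\R\mathcal X_p$---is exactly right, as is the observation that harmonicity of $g_Z$ (so that $\omega_{\widehat{\mathcal Z}}$ is a multiple of $\mu_{GS}$) is what makes $\widehat{\mathcal Z}$ orthogonal to $a(A^0(X))$.

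The one step you assert rather than compute is the final correction, and it is precisely there that the literal formula fails. By (\ref{eq:VertInfiniteIntersection}) one has $\langle a(c),\widehat{\Delta}_{\GS}\rangle = \tfrac{c}{2}\deg\widehat{\Delta}_{\GS}$, hence
$$
\langle \widehat{\mathcal Z}-2a(c),\ \widehat{\Delta}_{\GS}\rangle \;=\; c\bigl(1-\deg\widehat{\Delta}_{\GS}\bigr), \qquad c=\langle\widehat{\Delta}_{\GS},\widehat{\mathcal Z}\rangle,
$$
and $\deg\widehat{\Delta}_{\GS}=\tfrac{rk}{12}\neq 1$, so subtracting $2a(\langle\widehat{\Delta}_{\GS},\widehat{\mathcal Z}\rangle)$ does not annihilate the pairing with $\widehat{\Delta}_{\GS}$. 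The correction that does is $-2\kappa(\widehat{\mathcal Z})a(1)$ with $\kappa(\widehat{\mathcal Z})=\langle\widehat{\Delta}_{\GS},\widehat{\mathcal Z}\rangle/\deg\widehat{\Delta}_{\GS}$, i.e.\ the formula of Proposition \ref{prop8.2} specialized to $\deg\widehat{\mathcal Z}=0$. This is a normalization slip in the statement as transcribed rather than a defect of your strategy, but a complete proof must carry out this one-line computation instead of appealing to ``normalized via (\ref{eq:VertInfiniteIntersection})''; once the constant is repaired, your remaining checks (that $a(c)$ pairs trivially with $\mathcal Y_p^\vee$ because $\deg\mathcal Y_p=0$, and with $a(A^0(X))$ because its Chern form vanishes, so the correction disturbs nothing else) go through verbatim.
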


Finally, let $\Delta_z$ be the Laplacian operator with respect to $\mu_{\GS}$. Then  the space $A^0(X)$  has an orthonormal basis $\{ f_j\}$ with
$$
\Delta_z f_j + \lambda_j f_j=0,  \quad  \langle f_i, f_j \rangle = \delta_{ij}, \quad \hbox{ and } 0 < \lambda_1 < \lambda_2 < \cdots,
$$
where  the inner product is given by
$$
\langle f, g \rangle = \int_{X_0(N)} f \bar g  \mu_{\GS}.
$$
In particular, every $f \in  A^0(X)$ has the decomposition
\begin{equation} \label{eq:SpectralDecomposition}
f(z) = \sum  \langle f, f_j \rangle   f_j.
\end{equation}
Recall also (\cite[(4.1.36)]{KRYBook} that
\begin{equation}
d_z  d_z^c f =\Delta_z(f)  \mu_{\GS}.
\end{equation}

With the above preparation, we are now ready to restate Theorem  \ref{theo:modularity} in a slightly  more precise form as follows.

\begin{theorem} Let the notation be as above. Then
$$
\widehat{\phi}(\tau) = \tilde{\phi}_{\MW}(\tau) + \phi_{\GS}(\tau) \widehat{\Delta}_{\GS} + \sum_{p|N} \phi_p(\tau) \mathcal Y_p^\vee + \phi_1(\tau) a(1) + a(\phi_{SM})
$$
where  $\phi_p$, $\phi_1$, and $\phi_{\GS}$  are real analytic   modular forms of $\Gamma'$  of weight $3/2$ and representation  $\rho_L$ valued in $\C[L^\sharp/L]$,  $\tilde{\phi}_{\MW}(\tau)$ is a modular form of $\Gamma'$  of weight $3/2$ and representation  $\rho_L$ valued in  finite dimension vector space $\widetilde{MW} \otimes \C[L^\sharp/L]$, and $\phi_{SM}$ is a modular form of $\Gamma'$  of weight $3/2$ and representation  $\rho_L$ valued in $A^0(X_0(N)) \otimes \C[L^\sharp/L]$.
\end{theorem}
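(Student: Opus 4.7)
The plan is to apply the decomposition of Proposition \ref{prop8.2} coefficient-by-coefficient to the generating series $\widehat{\phi}(\tau)$, and to verify modularity of each of the resulting five pieces separately, using Theorem \ref{maintheo} together with the theta-lifting identity of Proposition \ref{prop:value}. Writing the decomposition as
\begin{equation*}
\widehat{\mathcal Z}(n,\mu,v) = \widetilde{\mathcal Z}_{\MW}(n,\mu,v) + \frac{\deg\widehat{\mathcal Z}(n,\mu,v)}{\deg \widehat{\Delta}_{\GS}}\widehat{\Delta}_{\GS} + \sum_{p|N} \langle \widehat{\mathcal Z}(n,\mu,v), \mathcal Y_p\rangle \mathcal Y_p^\vee + 2\kappa(\widehat{\mathcal Z}(n,\mu,v))\,a(1) + a(f_{n,\mu,v}),
\end{equation*}
and assembling the corresponding $q$-series with basis vectors $e_\mu$, the coefficients $\phi_{\GS}(\tau)$, $\phi_p(\tau)$, $\phi_1(\tau)$ are built out of $\deg\widehat{\phi}(\tau)$, $\langle\widehat{\phi}(\tau),\mathcal Y_p\rangle$, and $\langle\widehat{\phi}(\tau),\widehat{\Delta}_{\GS}\rangle$. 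The first two are modular by Propositions \ref{prop:value} and \ref{prop:vertical} (they are explicit multiples of $\mathcal E_L(\tau,1)$ and $\mathcal E_L(\tau,1)\log p$), and the third is modular by Proposition \ref{prop8.1}, so each of these three scalar coefficients is a (non-holomorphic) modular form of weight $3/2$ for $(\Gamma',\rho_L)$.

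Next I would deal with the smooth component $a(\phi_{SM}(\tau))$. Using the spectral basis $\{f_j\}$ of $A^0(X)$, one has
\begin{equation*}
\phi_{SM}(\tau) = \sum_j \langle \phi_{SM}(\tau), f_j\rangle\, f_j,
\end{equation*}
and by the pairing formula (\ref{eq:VertInfiniteIntersection}) each spectral coefficient equals, up to an explicit constant involving $\lambda_j$ and the pairings of $f_j$ against $1,\widehat{\Delta}_{\GS},\mathcal Y_p$, the theta integral $I(\tau,f_j) = 2\langle\widehat{\phi}(\tau),a(f_j)\rangle$ of Proposition \ref{prop:value}. Since $\Theta_L(\tau,z)$ is termwise a weight-$3/2$ form of $(\Gamma',\rho_L)$ in $\tau$ and $f_j$ is of rapid decay on $X$ (being in $A^0$), the integral converges and is modular. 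The spectral sum converges rapidly because the $f_j$ are eigenfunctions of the Laplacian and $\Theta_L$ is smooth on $X$, so $\phi_{SM}(\tau)$ is a modular form valued in $A^0(X)\otimes\C[L^\sharp/L]$, just as in \cite[Chapter 4]{KRYBook}.

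The remaining piece $\tilde{\phi}_{\MW}(\tau)$ lives in the finite-dimensional space $\widetilde{\MW}\otimes\C[L^\sharp/L]$, so via the isomorphism of Proposition \ref{prop8.3} its modularity is equivalent to that of the classical generating series
\begin{equation*}
\phi_{\MW}(\tau) = \sum_{n,\mu} [Z(n,\mu)]\, q^n e_\mu \in J_0(N)(\Q)_{\R}\otimes\C[L^\sharp/L][[q]],
\end{equation*}
whose coefficients for $n>0$ are the classes of the Heegner divisors. This is precisely the vector-valued refinement of the Gross--Kohnen--Zagier modularity theorem \cite{GKZ}, which holds in our square-free level setting; the remaining boundary coefficients ($n\le 0$) contribute only cusp classes, which are torsion in $J_0(N)$ and so do not affect modularity. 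Combining the four modularity statements gives the decomposition, and hence Theorem \ref{theo:modularity}.

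The main obstacle in this program is the Mordell--Weil piece: one must invoke the vector-valued Gross--Kohnen--Zagier theorem, and verify that the boundary corrections introduced in (\ref{eq:ArithmeticDivisor}) (the $2\widehat{\omega}_N$, $\mathcal X_p^0$, and $(0,\log(v/N))$ terms at $n=0$) project trivially to $\widetilde{\MW}$ under the decomposition of Proposition \ref{prop8.2}, so that $\tilde\phi_{\MW}(\tau)$ really does coincide with the Gross--Kohnen--Zagier generating series. Once this compatibility is checked, assembling the four modular pieces yields the claimed decomposition of $\widehat\phi(\tau)$.
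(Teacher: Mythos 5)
Your proposal is correct and follows essentially the same route as the paper: the scalar coefficients $\phi_{\GS},\phi_p,\phi_1$ are extracted from $\deg\widehat\phi$, $\langle\widehat\phi,\mathcal Y_p\rangle$, $\langle\widehat\phi,\widehat\Delta_{\GS}\rangle$ and shown modular via Theorem \ref{maintheo} and Proposition \ref{prop8.1}; the Mordell--Weil piece is identified with the Gross--Kohnen--Zagier generating series using Manin's theorem that degree-zero cuspidal divisors are torsion; and the smooth piece is handled by the spectral expansion in the $f_j$, where moving $d_zd_z^c$ onto $\phi_{SM}$ via self-adjointness and the Green current equation reduces each coefficient to the manifestly modular theta integral $\int_{X_0(N)}\Theta_L(\tau,z)\bar f_j$. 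The only cosmetic difference is that the paper carries out this last integration-by-parts explicitly (the $\lambda_j$ factors cancel exactly), whereas you leave the constant unspecified, but the argument is the same.
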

\begin{proof} Under the isomorphism in Proposition \ref{prop8.3}, $\widetilde{\phi}_{MW}$ becomes (here we use Manin's well-known result that the divisor of degree $0$ supported on cusps is torsion and is thus zero in $\CH_\R^1(X)$)
$$
\phi(\tau)_\Q - \frac{\deg \widehat{\phi}} {\deg \widehat{\Delta}_{\GS}} \Div (\Delta_N)_\Q
  =\sum_{n >0,  \mu} (Z(n,\mu) - \deg Z(n, \mu)  P_\infty) q_{\tau}^n e_\mu,
$$
which is modular by either the main result of Gross-Kohnen-Zagier \cite{GKZ} (note that Jacobi forms there are the same as vector valued modular forms we used here), or Borcherds' modularity result for $\phi(\tau)_\Q$ (see \cite[Theorem 4.5.1]{KRYBook}) and Proposition \ref{prop:value}.  Next,
$$
\phi_{\GS}(\tau) \langle \widehat{\Delta}_{\GS}, a(1) \rangle = \langle \widehat{\phi}, a(1) \rangle
$$
implies that $\phi_{\GS}(\tau)$ is modular by Proposition \ref{prop:value}. For a given $p|N$, 
$
\phi_p(\tau)  =\langle \widehat{\phi}, \mathcal Y_p \rangle   
$
 is modular by Proposition  \ref{prop:vertical}.  The identity
$$
\langle \widehat{\phi}, \widehat{\Delta}_{\GS} \rangle = \phi_{\GS} \langle \widehat{\Delta}_{\GS},\widehat{\Delta}_{\GS} \rangle 
+ \phi_1(\tau) \langle  a(1) ,  \widehat{\Delta}_{\GS} \rangle
$$ 
 implies that $\phi_1(\tau)$ is modular by Proposition \ref{prop8.1}.

 Finally, we have by   Proposition \ref{prop8.2},
 \begin{equation}\label{Greenfundecom}
 \phi_{SM} = \Xi_{L}(\tau, z) - g_{MW}  -\phi_{\GS}(\tau) g_{\GS} - \phi_1(\tau) \in A^0(X),
 \end{equation}
Since 
\begin{equation}\label{Kudlacurrent}
d_z d_z^c \Xi_{L}(\tau, z)+\sum_{n, \mu}\delta_{Z(n, \mu)}q^{n}e_{\mu}=\Theta_L(\tau, z) \end{equation} and
\begin{equation}\label{current}
d_z d_z^c (g_{MW}+ \phi_{\GS}(\tau)g_{\GS})+\sum_{n, \mu}\delta_{Z(n, \mu)}q^{n}e_{\mu}=  \frac{\deg\widehat{\phi}}{\deg \widehat{\Delta}_{GS}} \mu_{GS},
\end{equation}
 we have 
 $$
 d_z d_z^c \phi_{SM} = \Theta_L(\tau, z)-\frac{\deg\widehat{\phi}}{\deg \widehat{\Delta}_{GS}} \mu_{GS} .
 $$
 Write 
 $$
 \phi_{SM}(\tau, z) = \sum  \langle \phi_{SM}, f_j \rangle f_j(z)
 $$
 as in (\ref{eq:SpectralDecomposition}). Then 
 \begin{align*}
 \langle \phi_{SM} , f_j \rangle &=-\frac{1}{\lambda_j} \int_{X_0(N)} \phi_{SM}(\tau, z) \Delta_z(\bar{f}_j) \mu_{\GS}
 \\
 &= -\frac{1}{\lambda_j} \int_{X_0(N)} \phi_{SM}(\tau, z) d_z d_z^c \bar{f}_j
 \\
 &=-\frac{1}{\lambda_j} \int_{X_0(N)} d_z d_z^c \phi_{SM}(\tau, z)  \bar{f}_j
 \\
 &=\int_{X_0(N)} (\Theta_L(\tau, z)-\frac{\deg\widehat{\phi}}{\deg \widehat{\Delta}_{GS}} \mu_{GS} )\bar{f_j} 
 \\
 &= \int_{X_0(N)} \Theta_L(\tau, z)\bar{f_j},
\end{align*}
 which is modular.
    Therefore $\phi_{SM}(\tau, z) $ is modular as a function of $\tau$.
\end{proof}

This proof was explained to one of us (T.Y.) by Sid Sankaran and avoids the metrized line bundle with log singularity and Eisenstein series in the usual spectral decomposition  of $A^0(X)$ with respect to $\mu(z) =\frac{dx dy}{ y^2} =4\pi  c_1(\widehat{\omega}_N)$. We thank him for kindly sharing his idea and other help.

\end{document}